\documentclass[10pt,reqno,a4paper]{amsart}
\usepackage{amsmath,amsthm,verbatim,amscd,amssymb,setspace,enumitem,hyperref}
\usepackage{exscale,color}
\usepackage[colorinlistoftodos,prependcaption]{todonotes}
\usepackage{tikz-cd}
\usepackage{enumitem}
\usepackage{mathrsfs}

\renewcommand{\epsilon}{\varepsilon}
\newcommand{\N}{\mathbb{N}}

\newcommand{\R}{\mathbb{R}}
\newcommand{\C}{\mathbb{C}}

\newcounter{mtheorem}
\newtheorem{mtheorem}[mtheorem]{Theorem}

\newtheorem{mcor}[mtheorem]{Corollary}

\newcommand{\supp}{\operatorname{supp}}

\newcommand{{\vol}}{\rm vol}
\newcommand{\p}{\partial}

\newcommand{\Ric}{\operatorname{Ric}}

\newcommand{\Rm}{\operatorname{Rm}}

\def\tr{\operatorname{tr}}

\def\Rm{\operatorname{Rm}}

\newtheoremstyle{fancy}{}{}{\itshape}{}{\textbf\bgroup}{.\egroup}{ }{}
\newtheoremstyle{fancy2}{}{}{\rm}{}{\textbf\bgroup}{.\egroup}{ }{}

\theoremstyle{fancy}
\newtheorem{theorem}{Theorem}[section]
\newtheorem{lemma}[theorem]{Lemma}
\newtheorem{corollary}[theorem]{Corollary}

\newtheorem{prop}[theorem]{Proposition}

\theoremstyle{fancy2}
\newtheorem{definition}[theorem]{Definition}

\newtheorem{remark}[theorem]{Remark}

\newtheorem{claim}[theorem]{Claim}

\setlist{leftmargin=*}

\numberwithin{equation}{section}

\begin{document}
\title{Asymptotic Profiles and Non-Trivial Breathers in Kähler-Ricci Flow}
\author{Longteng Chen}
\address{Université Paris-Saclay, CNRS, Laboratoire de Mathématiques d'Orsay, 91405 Orsay, France }
\email{longteng.chen@universite-paris-saclay.fr}

\begin{abstract}
In this paper, we investigate the relationship between the long-time behavior of solutions to the Kähler–Ricci flow on an asymptotically conical gradient Kähler–Ricci expander $(M,g,X)$ and the asymptotic behavior of their initial data at spatial infinity.

After reviewing several fundamental concepts and basic results concerning the Kähler–Ricci flow, we explore the possibility that a global solution need not be asymptotically self-similar. More precisely, we demonstrate that the asymptotic profile of a solution may depend on the choice of time sequence tending to infinity. In particular, there exist solutions that are asymptotic to infinitely many distinct self-similar solutions along different time sequences.

We further establish an explicit connection between this phenomenon and the asymptotic behavior of the initial data under a corresponding family of dilations. Moreover, we show that a single solution can exhibit genuinely different asymptotic behaviors along different sequences of times tending to infinity, as well as under different choices of time-dependent rescalings.

Finally, we discuss the possibility of equipping the space of Kähler–Ricci flows on $(M,g,X)$ with a suitable topology such that the parabolic rescaling map of the Kähler–Ricci flows defines a dynamical system. We prove there are infinitely many non-soliton fixed points in this dynamical system, which are non-trivial Ricci breathers.
\end{abstract}
\maketitle
\section{Introduction}
This paper investigates the long-time behavior of the Kähler–Ricci flow on noncompact manifolds. Let $M$ be a complex manifold. A smooth family of Kähler metrics $(g(t))_{t \in (0,T)}$ on $M$ is said to solve the Kähler–Ricci flow equation if it satisfies the evolution equation
\begin{equation*}
    \frac{\partial}{\partial t} g(t) = -\Ric(g(t)), \quad \text{for all } t \in (0,T).
\end{equation*}
As a weakly parabolic equation, the Kähler–Ricci flow is expected to exhibit long-time behavior analogous to that of the parabolic equation. It is well-known that for a large number of linear and nonlinear equations the set of solutions is invariant under some group of space-time dilations. This phenomenon gives rise to the study of self-similar solutions, i.e. those solutions which are themselves invariant under the same group of dilations. A self-similar solution to the Kähler–Ricci flow is given by a Kähler–Ricci soliton. An \emph{expanding} (gradient) Kähler–Ricci soliton consists of a triple $(M,g,X)$, where $(M,g)$ is a Kähler manifold and $X$ is a real holomorphic vector field (of gradient type, $X=\nabla^g f$) satisfying the soliton equation:
\begin{equation*}
    \frac{1}{2}\mathcal{L}_X g = \Ric(g) + g.
\end{equation*}
In the gradient case, this equation can be rewritten as
\begin{equation*}
    \operatorname{Hess}_g f = \Ric(g) + g.
\end{equation*}
Let $\omega$ denote the Kähler form associated to $g$. Then the soliton equation is equivalently expressed at the level of $(1,1)$-forms as
\begin{equation*}
    i\partial\bar{\partial} f = \Ric(\omega) + \omega.
\end{equation*}
If $\Phi_t$ denotes the flow of $-\frac{X}{2t}$ for any $t>0$, then the \emph{self-similar} solution $g(t)_{t>0}$ can be defined as follows:
\begin{equation*}
    g(t):=t\Phi_t^*g.
\end{equation*}
This solution is self-similar because it is invariant under the following dilation group action.
\begin{definition}[Dilation group for K\"ahler-Ricci flow]\label{D}
  Let $h(t)_{t\in (0,T)}$ be a K\"ahler-Ricci flow on a K\"ahler-Ricci expander $(M,g,X)$, let $\Phi_s$ be the flow of $-\frac{X}{2s}$ for positive $s$. For every $\lambda>0$, the \emph{dilated K\"ahler-Ricci flow at the scale $\lambda$} is defined as follows:
    \begin{equation*}
        \mathcal{D}(\lambda)h(t):=\lambda\Phi_\lambda^*\left(h(\frac{t}{\lambda})\right),\quad \textnormal{for all $t\in(0,\lambda T)$}.
    \end{equation*}
    Moreover, the family $\{\mathcal D(\lambda)\}_{\lambda>0}$ forms a multiplicative group.
\end{definition}
The long-time behavior of the Kähler–Ricci flow on closed manifolds has been extensively studied by Cao in \cite{Cao1, Cao2}. In the noncompact setting, the stability of the Kähler–Ricci flow has been investigated in \cite{ChauKE, ChauSchnurer, longteng2}. In this paper, we focus on noncompact manifolds that are asymptotic to a cone at geometric infinity.
\begin{definition}[Asymptotically conical gradient K\"ahler-Ricci expander]\label{ACKR expander}
    An asymptotically conical gradient K\"ahler-Ricci expander is a triple $(M,g,X)$ being a complete expanding gradient K\"ahler-Ricci soliton
whose curvature $\operatorname{Rm}(g)$ satisfies
\begin{equation*}
\sup_{x\in M}|(\nabla^{g})^{k}\operatorname{Rm}(g)|_{g}(x)d_{g}(p,\,x)^{2+k}<\infty\quad\textrm{for all $k\in\mathbb{N}_{0}$,}
\end{equation*}
where $d_{g}(p,\,\cdot)$ denotes the distance to a fixed point $p\in M$ with respect to $g$. 
\end{definition}
On a noncompact complex manifold, for any complete Kähler metric $g_0$ with bounded curvature, Shi \cite{ShiJDG1} proved the existence and uniqueness of a complete solution ($\tilde g(t))_{t \in [0,T)}$ to the Kähler–Ricci flow with bounded curvature on every compact time interval. Our first theorem shows that, if the initial metric is chosen \emph{close} to a soliton metric, then Shi’s flow remains uniformly \emph{comparable} to the corresponding self-similar solution.
\begin{mtheorem}[Estimation along K\"ahler-Ricci flow]\label{main estimate thm}
   Let $(M,g,X)$ be an asymptotically conical gradient K\"ahler-Ricci expander as in Definition \ref{ACKR expander}. There exists an $a>0$, such that for any $g_0$ a K\"ahler metric that is $a-$close to $g$ (see Definition \ref{a closeness}), the following results hold for the unique Shi's K\"ahler-Ricci flow $\tilde g(t)_{t\in[0,T)}$ with $\tilde g(0)=g_0$:
    \begin{enumerate}[label=\textnormal{(\alph{*})}, ref=(\alph{*})]
        \item \label{a condition}The K\"ahler-Ricci flow $\tilde g(t)_{t\in[0,T)}$ is immortal, that is, $T=\infty$.
        \item\label{b condition} The flow $\tilde g(\cdot)$ is comparable to the self-similar flow $g(\cdot+1)$, i.e.
        \begin{enumerate}[label=\textnormal{(\roman{*})}, ref=(\roman{*})]
             \item There exists a uniform constant $C>1$ such that along the flow, we have
        \begin{equation*}
           C^{-1}g(t+1)\le \tilde g(t)\le Cg(t+1).
        \end{equation*}
        \item Along the flow, let $p\in M$ be a zero point of $X$, we have
        \begin{equation*}
            \sup_{t\ge0}\sup_M\left(\sqrt{t+1}+d_{g(t+1)}(p,\cdot)\right)^k|(\nabla^{g(t+1)})^k\tilde g(t)|_{g(t+1)}\le C_k,\quad\textnormal{for all $k\in\N_0$},
        \end{equation*}
        where $d_{g(t+1)}(p,\,\cdot)$ denotes the distance to a fixed point $p\in M$ with respect to $g(t+1)$. 
        \end{enumerate}
    \end{enumerate}
\end{mtheorem}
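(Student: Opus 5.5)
We will reduce the Kähler–Ricci flow to a scalar complex Monge–Ampère flow for a Kähler potential relative to the self-similar solution, and then prove everything with maximum-principle estimates that are uniform in time. The definition of $a$-closeness is not yet visible. We assume it requires $\omega_0-\omega=i\partial\bar\partial\varphi_0$, with $\varphi_0$ (and enough derivatives) small in a weighted norm adapted to the conical structure, with weight $(1+d_g(p,\cdot))^{k}$ on $k$ derivatives.

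\textbf{Setting up the scalar flow.} Write $\omega(t)=t\Phi_t^*\omega$ for the self-similar expander. We look for $\tilde\omega(t)=\omega(t+1)+i\partial\bar\partial\varphi(t)$ with $\varphi$ solving
\begin{equation*}
\partial_t\varphi=\log\frac{(\omega(t+1)+i\partial\bar\partial\varphi)^n}{\omega(t+1)^n},\qquad \varphi(0)=\varphi_0.
\end{equation*}
This works because $\omega(t+1)$ already solves the flow, so $-\Ric(\tilde\omega)+\Ric(\omega(t+1))=i\partial\bar\partial\log(\tilde\omega^n/\omega(t+1)^n)$. Shi's flow gives a bounded-curvature solution on a short interval. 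By uniqueness it agrees with the flow produced by this potential, as long as the latter has bounded curvature. Items \ref{a condition} and \ref{b condition} then follow from a priori estimates on $\varphi$ that are uniform on $[0,T)$, together with the standard continuation criterion.

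\textbf{A priori estimates.} We proceed in the following order.
\begin{enumerate}
\item $C^0$ and $\dot\varphi$ bounds. Differentiating in $t$ gives $(\partial_t-\Delta_{\tilde\omega})\dot\varphi=-\langle \Ric(\omega(t+1))+\partial_t\omega(t+1),\cdot\rangle$-type terms, which for the self-similar flow reduce to $\tfrac{1}{t+1}(\ldots)$. Combining this with the expander identity $\partial_t\omega(t)=-\Ric(\omega(t))$, we will use the maximum principle (valid on the noncompact $M$ by bounded geometry) on quantities such as $(t+1)\dot\varphi-\varphi$ and $\dot\varphi$. The aim is $|\dot\varphi|\le C\epsilon$ and $|\varphi|\le C\epsilon(t+1)$, which is the correct self-similar scaling.
\item Second-order bound. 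We use a Chern–Lu / Aubin–Yau computation for $\tr_{\omega(t+1)}\tilde\omega$ and $\tr_{\tilde\omega}\omega(t+1)$. The key input is that the bisectional curvature of $\omega(t+1)$ is bounded by $C/(t+1+d_{g(t+1)}(p,\cdot)^2)$, which comes from Definition~\ref{ACKR expander} after rescaling. This gives $C^{-1}\omega(t+1)\le\tilde\omega(t)\le C\omega(t+1)$, i.e.\ item (i). The main obstacle is here: the curvature term must be integrable in time in a scale-invariant way. We will try to handle it with the auxiliary function $\log\tr_{\omega(t+1)}\tilde\omega-A\varphi/(t+1)$. An alternative is to run the estimate for the rescaled flows $\mathcal D(\lambda^{-1})\tilde g$ on unit time intervals, where the background has uniformly bounded geometry, and use smallness of $a$ to close a continuity argument that keeps $\tilde\omega$ $\epsilon$-close to $\omega(t+1)$.
\item Higher derivatives with weights. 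Fix $(x_0,t_0)$ and set $r=\sqrt{t_0+1}+d_{g(t_0+1)}(p,x_0)$. On the parabolic cylinder of size $\sim r$ around $(x_0,t_0)$, the background $g(t+1)$ rescaled by $r^{-2}$ has uniformly bounded geometry; this uses both the conical curvature decay and self-similarity. The equation is uniformly parabolic by step 2. Evans–Krylov or Calabi $C^3$ estimates followed by Schauder give bounds on the rescaled $\tilde g$. Scaling back gives the factor $r^k$ in item (ii).
\end{enumerate}

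\textbf{Conclusion.} Uniform metric equivalence plus the derivative bounds give bounded curvature on every finite interval. Shi's continuation criterion therefore yields $T=\infty$, which is item \ref{a condition}, and the estimates, being uniform in $t$, give item \ref{b condition}. The threshold $a$ is fixed by requiring the smallness bootstrap in step 2 to close.
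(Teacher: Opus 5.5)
Your reduction to the Monge--Amp\`ere flow for $\varphi$ and the final continuation argument match the paper. There is, however, a genuine gap, and it starts with the hypothesis you assumed.

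\textbf{The hypothesis is much weaker than global smallness.} Definition~\ref{a closeness} asks only three things: $\operatorname{BL}_\infty(g,g_0)<a$; the bounds $d_g(p,\cdot)^k|(\nabla^g)^kg_0|_g\le C_k$ hold with constants $C_k$ that are \emph{not} small; and $g_0$ is $JX$-invariant. Consequently:
\begin{itemize}
\item on any compact set, $g_0$ is an arbitrary $JX$-invariant metric of the form $\omega+i\partial\bar\partial\psi_0$;
\item at infinity, $g_0$ need not converge to $g$ at all (the data of Theorem~\ref{universal initial data} oscillate between different cone metrics);
\item the potential only satisfies $(e^{-a}-1)f-C\le\psi_0\le(e^a-1)f+C$ with $C$ uncontrolled, so it grows quadratically in general.
\end{itemize}

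\textbf{Consequences for your steps.} Your Step~1 targets already fail at $t=0$. Indeed, $\dot\varphi(0)=\log(\omega_0^n/\omega^n)$ is not small on compact sets, and $|\varphi(0)|\le C\epsilon$ is false. The continuity argument in Step~2, which keeps $\tilde\omega$ $\epsilon$-close to $\omega(t+1)$, therefore cannot start. Moreover, $\varphi$ may grow quadratically with either sign, so the term $A\varphi/(t+1)$ in your auxiliary function is unbounded in space. A maximum principle for it would need boundary control at infinity that you do not have. What you outline is at best a stability result for small perturbations decaying at infinity, where the flow is asymptotic to $g(t+1)$ itself. Even in that setting, the decisive $C^2$ step is left as ``we will try''.

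\textbf{What the paper uses instead.} The following ingredients handle non-small data and are missing from your plan.
\begin{itemize}
\item \emph{Quantifying the potential.} $JX$-invariance turns $\operatorname{BL}_\infty(g,g_0)<a$ into the two-sided bounds $(e^{-a}-1)f-C\le\psi_0,\ \tfrac{X}{2}\cdot\psi_0\le(e^a-1)f+C$. To see this, apply the bi-Lipschitz bound to $X$, use $JX\cdot\psi_0=0$ to express the result through $X\cdot(\tfrac{X}{2}\cdot\psi_0)$, and integrate along $X$.
\item \emph{Outer region.} Perelman's pseudolocality gives curvature decay and $e^{-a}g(t+1)\le\tilde g(t)\le e^ag(t+1)$ on $\{r^2\ge\lambda(t+1)\}$. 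In self-similar variables this becomes boundary data on $\{r^2=\lambda\}$ for the normalized flow.
\item \emph{Inner region.} On the compact region, consider the Hamiltonian $f_\psi=f+\tfrac{X}{2}\cdot\psi$. It solves $(\partial_\tau-\Delta_{\omega_\psi}-\tfrac{X}{2})f_\psi=-f_\psi$ and satisfies $\inf f_\psi\ge\inf f>0$. It is shown comparable to $f$ with the barriers $f-A_-\psi-Bf_\psi$ and $f+A_+\psi-\alpha f_\psi$, where $-A_-g\le\nabla^2f\le A_+g$.
\item \emph{Where $a$ is chosen.} Besides pseudolocality, the smallness of $a$ is used in the inner-region barrier argument, through $A_-(e^a-1)<1$ and $A_+(e^{2a}-1)+A_+^2(e^a-1)<A_+$. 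The additive constants there may be large.
\item \emph{$C^2$ estimate.} A Yau-type $C^2$ estimate converts $D^{-1}f\le f_\psi\le Df$ into $C^{-1}g\le g_\psi\le Cg$. Undoing the rescaling, this is item~(i).
\end{itemize}
Your Step~3 (local parabolic rescaling plus Evans--Krylov and Schauder) is fine once (i) is available.
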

Due to the parabolic rescaling properties of general parabolic equations, it is widely believed that the long-time behavior of solutions is governed by the asymptotic behavior of the initial data at spatial infinity (see \cite{CazenaveDicksteinWeisslerNS} for the Navier–Stokes system and \cite{CazenaveDicksteinWeisslerParaEq} for the heat equation). On an AC (asymptotically conical) Kähler–Ricci expander, the asymptotic behavior of the Kähler metric is governed by the dilated K\"ahler metric at small scales, induced by the natural dilation structure of the asymptotic cone.
\begin{definition}[Dilation group for K\"ahler metrics]\label{D'}
    Let $h$ be a K\"ahler metric on $(M,g,X)$, let $\Phi_s$ be the flow of $-\frac{X}{2s}$ for positive $s$. For every $\lambda>0$, the \emph{dilated K\"ahler metric at scale $\lambda$} is defined as follows:
    \begin{equation*}
        \mathcal{D}'(\lambda)h:=\lambda\Phi_\lambda^*h.
    \end{equation*}
     Moreover, the family $\{\mathcal D(\lambda)\}_{\lambda>0}$ forms a multiplicative group.
\end{definition}
Letting the small-scale parameters in Definitions \ref{D} and \ref{D'} tend to zero yields the corresponding asymptotic profiles for both the Kähler–Ricci flow and the initial Kähler metric.
\begin{definition}[$\varkappa-$limit set and $\kappa-$limit set]
   Let $h_0$ be a K\"ahler metric on $(M,g,X)$ with bounded curvature, assume that $h(t)_{t\in[0,T)}$ is the unique Shi's K\"ahler-Ricci flow on $(M,g,X)$ starting from $h_0$. Furthermore, we assume that this flow is immortal, i.e. $T=\infty$. The $\varkappa-$limit set and $\kappa-$limit set are defined as follows:
    \begin{equation*}
       \begin{split}
          & \varkappa(h_0):=\{\tilde h(t)_{t\in (0,\infty)}\ |\ \exists \lambda_n\to 0^+, \textnormal{ such that $\mathcal{D}(\lambda_n)h(\cdot)$ converge to $\tilde h(\cdot)$ locally smoothly on $M\times (0,\infty)$}\},\\
          &\kappa(h_0):=\{\tilde h_0\ |\ \exists \lambda_n\to 0^+, \textnormal{ such that $\mathcal{D}'(\lambda_n)h_0$ converge to $\tilde h_0$ locally smoothly on $M\setminus E $}\}.
       \end{split}
    \end{equation*}
    Here $E\subset M$ is the exceptional divisor as in Theorem \ref{theorem of conlonderuellesun}.
\end{definition}
An asymptotic cone of a metric space $(M,d)$ is any pointed Gromov–Hausdorff limit of a sequence of rescaled metric spaces $(M,\lambda_i d,p)$, where $p\in M$ and $\lambda_i\to 0^+$. We refer the reader to \cite{BuragoBuragoSergei} for a precise definition of the pointed Gromov–Hausdorff topology. With strong geometric assumptions, one can show the uniqueness of asymptotic cone as \cite{TianCheeger,ColdingMinicozzi1,ChowLu}. For the non uniqueness part and the classification results, see \cite{Kota,ZhouSX2,ZhouSX1}. From the above definition, under suitable hypotheses one can show $\kappa(h_0)$ is a subset of the asymptotic cones of $(M,d_{h_0})$. A priori, $\kappa(h_0)$ may contain infinitely many elements.

The following theorem shows that the long-time behavior of the Kähler–Ricci flow is determined by the asymptotic spatial behavior of the initial metric.
\begin{mtheorem}[Relationship between $\varkappa-$limit set and $\kappa-$limit set]\label{relation between two limit sets}
    Let $g_0$ be the K\"ahler form as in Theorem \ref{main estimate thm}, then its $\varkappa-$limit set and $\kappa-$limit set are nonempty. Moreover, for each $\tilde h_0\in \kappa(\omega_0)$ (resp. $\tilde h(t)_{t\in (0,\infty)}\in \varkappa(g_0)$), there exists a K\"ahler-Ricci flow $\tilde h(t)_{t\in (0,\infty)}\in \varkappa(g_0)$ (resp. a K\"ahler metric $\tilde h_0\in \kappa(g_0)$) such that:
    \begin{enumerate}[label=\textnormal{(\alph{*})}, ref=(\alph{*})]
    \item\label{Theorem B i} The flow $\tilde h(t)_{t\in (0,\infty)}$ is with type III singularity and comparable to the self-similar flow $g(t)_{t\in (0,\infty)}$:
    \begin{enumerate}[label=\textnormal{(\roman{*})}, ref=(\roman{*})]
        \item There exists a uniform constant $C>1$ such that along the flow, we have
        \begin{equation*}
           C^{-1}g(t)\le \tilde h(t)\le Cg(t),\quad \forall t>0.
        \end{equation*}
        \item Along the flow, let $p\in M$ be a zero point of $X$, we have
        \begin{equation*}
            \sup_{t>0}\sup_M\left(\sqrt{t}+d_{g(t)}(p,\cdot)\right)^k|(\nabla^{g(t)})^k\tilde h(t)|_{g(t)}\le C_k,\quad \textnormal{for all $k\in\N_0$}
        \end{equation*}
        where $d_{g(t)}(p,\,\cdot)$ denotes the distance to a fixed point $p$ with respect to $g(t)$. 
    \end{enumerate}
    \item \label{Theorem B ii} The metrics $\tilde h(t)$ converge locally smoothly to $\tilde h_0$ on $M \setminus E$.
    
    \item \label{Theorem B iii} Let $\pi: M\mapsto\mathcal{C}$ be the K\"ahler resolution as in Theorem \ref{theorem of conlonderuellesun}. Then, 
    the metric completion of $(\mathcal{C} \setminus \{o\}, \pi_* \tilde h_0)$ is homeomorphic to $\mathcal{C}$; we denote it by $(\mathcal{C}, d_{\pi_* \tilde h_0})$. Moreover, for each $p \in E$, $o$ being the vertex of $\mathcal{C}$,
    \begin{equation*}
        \left(M, d_{\tilde h(t)}, p\right) \xrightarrow{t \to 0^+} \left(\mathcal{C}, d_{\pi_* \tilde h_0}, o\right),
    \end{equation*}
    in the pointed Gromov--Hausdorff topology.
    \item \label{Theorem B iv} In particular, if $(\mathcal{C},\pi_*\tilde h_0)$ is a K\"ahler cone, then $(M,\tilde h(1),X)$ is the unique gradient AC K\"ahler-Ricci expander whose self-similar solution converges to $\pi_*\tilde h_0$ locally smoothly on $\mathcal{C}\setminus \{o\}$.
\end{enumerate}
\end{mtheorem}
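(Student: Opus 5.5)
The idea is to treat the dilations $\mathcal D(\lambda)$ and $\mathcal D'(\lambda)$ as compatible actions and to extract limits by compactness from the uniform estimates of Theorem \ref{main estimate thm}. The basic identity comes from Definitions \ref{D} and \ref{D'} together with the self-similarity $\mathcal D(\lambda)g(\cdot)=g(\cdot)$. If $\tilde g(t)$ is Shi's flow from $g_0$, then $h_\lambda(t):=\mathcal D(\lambda)\tilde g(t)=\lambda\Phi_\lambda^*\tilde g(t/\lambda)$ is again a Kähler--Ricci flow, with $h_\lambda(0)=\mathcal D'(\lambda)g_0$. We also have $\mathcal D(\lambda)\big(g(\cdot+1)\big)(t)=g(t+\lambda)$, since $\lambda\Phi_\lambda^* g(t/\lambda+1)=g(t+\lambda)$ by the group property of the flows $\Phi_s$. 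Applying $\mathcal D(\lambda)$ to the estimates of Theorem \ref{main estimate thm}\ref{b condition} therefore gives, uniformly in $\lambda$,
\[
C^{-1}g(t+\lambda)\le h_\lambda(t)\le Cg(t+\lambda),\qquad \big(\sqrt{t+\lambda}+d_{g(t+\lambda)}(p,\cdot)\big)^k|(\nabla^{g(t+\lambda)})^kh_\lambda(t)|\le C_k .
\]
Here we use that the weighted quantities are scale invariant: the pullback by $\Phi_\lambda$ fixes $p$ because $X(p)=0$.

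\textbf{Step 1: nonemptiness and the bounds in (a).} On compact subsets of $M\times(0,\infty)$ the metrics $g(t+\lambda)$ converge smoothly to $g(t)$. The bounds above give uniform $C^k$ control of $h_\lambda$ with respect to a fixed background, and the flow equation converts this into control of time derivatives. Arzelà--Ascoli then produces a subsequence $\lambda_n\to0$ along which $h_{\lambda_n}\to\tilde h$ locally smoothly on $M\times(0,\infty)$. The limit is a Kähler--Ricci flow and inherits the estimates in (a)(i)--(ii) with $\lambda=0$. The case $k=2$ gives $|\Rm(\tilde h(t))|\le C/t$, which is the type III property. For the initial data, the conical decay in Definition \ref{ACKR expander} together with the closeness condition of Definition \ref{a closeness} gives uniform bounds on $\mathcal D'(\lambda)g_0$ relative to $\mathcal D'(\lambda)g=\lambda\Phi_\lambda^*g$. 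I expect these bounds to hold locally smoothly on $M\setminus E$, because $\lambda\Phi_\lambda^*g$ converges there to $\pi^*g_{\mathcal C}$ by Theorem \ref{theorem of conlonderuellesun}. Hence $\kappa(g_0)$ is nonempty as well.

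\textbf{Step 2: matching the two limits (b).} Starting from $\tilde h_0\in\kappa(g_0)$ with sequence $\lambda_n$, I pass to a further subsequence so that $h_{\lambda_n}\to\tilde h$; the converse direction is symmetric. The remaining task is $\tilde h(t)\to\tilde h_0$ locally smoothly on $M\setminus E$ as $t\to0$. The key is an estimate uniform in $\lambda$: on a compact $K\subset M\setminus E$,
\[
|h_\lambda(t)-h_\lambda(0)|_{C^k(K)}\le \omega_K(t),\qquad \omega_K(t)\to0 \text{ as } t\to0,
\]
obtained by integrating $\partial_th_\lambda=-\Ric(h_\lambda)$. For this I need a bound $|\nabla^k\Rm(h_\lambda(t))|\le C_{K,k}$ that holds up to $t=0$ on $K$. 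Such a bound should follow from pseudolocality or Shi's local derivative estimates, since $h_\lambda(0)$ has uniformly bounded geometry on a neighborhood of $K$ away from $E$. In fact the scaled initial bounds $(\sqrt{\lambda}+d)^k|\nabla^k g_0|$ near $K$ give $C^k$ bounds at scale comparable to $d_{g(\lambda)}(p,K)$, and these are uniform. Letting $n\to\infty$ and then $t\to0$ yields (b). I expect this step, the control up to $t=0$ away from $E$, to be the main obstacle, since it requires local estimates and not only the global weighted ones.

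\textbf{Step 3: (c) and (d).} For (c), the bound $C^{-1}g(t)\le\tilde h(t)\le Cg(t)$ and the Gromov--Hausdorff convergence of $(M,d_{g(t)},p)$ to the cone $(\mathcal C,d_{g_{\mathcal C}},o)$ show that the distances $d_{\tilde h(t)}$ are uniformly bi-Lipschitz to a converging family. On compact sets away from the vertex, (b) gives smooth convergence to $\pi_*\tilde h_0$. Near the vertex, the diameter of $E$ satisfies $\le C\sqrt t\to0$, and the length distortion on $d_{g(t)}$-balls of radius $r$ is $O(r)$. A standard gluing argument for distance functions then gives the pointed Gromov--Hausdorff convergence, and it identifies the metric completion of $(\mathcal C\setminus\{o\},\pi_*\tilde h_0)$ with $\mathcal C$ via the bi-Lipschitz comparison with $g_{\mathcal C}$. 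For (d), suppose $\pi_*\tilde h_0$ is a Kähler cone metric. Then $\mathcal D'(\mu)\tilde h_0=\tilde h_0$ for all $\mu$. Both $\tilde h$ and $\mathcal D(\mu)\tilde h$ satisfy the estimates of (a) and have the same initial data in the sense of (b). A uniqueness result for flows with these estimates coming out of a cone, as in the Conlon--Deruelle--Sun framework or by a maximum principle on the potential difference, then gives $\mathcal D(\mu)\tilde h=\tilde h$. So $\tilde h$ is self-similar, $(M,\tilde h(1),X)$ is an AC expander, and the same uniqueness argument gives uniqueness of the expander.
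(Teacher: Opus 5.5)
Your treatment of nonemptiness, (a), (b) and (c) is essentially the paper's proof. You dilate the bounds of Theorem \ref{main estimate thm} using $\mathcal D(\lambda)(g(\cdot+1))(t)=g(t+\lambda)$ and extract limits by Arzel\`a--Ascoli. You get (b) from a uniform-in-$\lambda$ estimate $|h_\lambda(t)-h_\lambda(0)|_{C^k(K)}\le C't$ on $K=\pi^{-1}\{r_0^2\le r^2\le R_0^2\}$ plus a triangle inequality. You get (c) from the bi-Lipschitz bound together with the diameter bound $\textnormal{Diam}_{\tilde h(t)}(\pi^{-1}\{r^2\le\delta\})\le C(\sqrt\delta+\sqrt t)$.

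The step you single out as the main obstacle is not one, and it needs neither pseudolocality nor local Shi estimates. Part (b)(ii) of Theorem \ref{main estimate thm} holds for all $t\ge0$, so your dilated weighted bounds already hold at $t=0$. On $K$ the weight $\sqrt{t+\lambda}+d_{g(t+\lambda)}(p,\cdot)$ is bounded below by $c\,r_0$ by Corollary \ref{proper function with t}. Moreover, $g(t+\lambda)$ has uniformly bounded geometry on $K$ for $t+\lambda\in(0,2]$. This gives uniform $C^k$ and curvature control of $h_\lambda(t)$ on $K$ for $t\in[0,1]$, which is exactly what the paper uses in Claim \ref{claim 2}.

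In (d) there is a missing ingredient. Every uniqueness statement available here needs a global potential, besides $JX$-invariance and the $C/t$ curvature bound. This holds both for the result of \cite{longteng1} used by the paper and for a maximum-principle argument like Proposition \ref{prop: uniqueness of bounded sols}. Concretely, you need a smooth $JX$-invariant $\bar\varphi$ on $M\times(0,\infty)$ with $\tilde h(t)=g(t)+i\partial\bar\partial\bar\varphi(t)$. Your ``maximum principle on the potential difference'' presupposes such a $\bar\varphi$. It does not follow from local smooth convergence of $h_{\lambda_n}$, because a locally smooth limit of $i\partial\bar\partial$-exact forms on a noncompact manifold is not automatically $i\partial\bar\partial$-exact.

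The paper supplies $\bar\varphi$ as follows. It writes $\tilde g(t)=g(t+1)+i\partial\bar\partial\varphi(t)$ as in Proposition \ref{MA flow}. It then uses the bound $|\varphi(t)|\le C(t+1)\Phi_{t+1}^*f+C(t+1)$ from Section \ref{estimation of KRF}, which dilates to $|\mathcal D(\lambda_n)\varphi(t)|\le C(t+\lambda_n)\Phi_{t+\lambda_n}^*f+C(t+\lambda_n)$. Together with the higher-order bounds, this lets it extract a subsequential limit $\bar\varphi$.

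With that added, your route to (d) works and is a mild variant of the paper's. You apply uniqueness to $\tilde h$ and its dilate $g(t)+i\partial\bar\partial\,\mu\Phi_\mu^*\bar\varphi(t/\mu)$, which share the initial data $\tilde h_0$, to get $\mathcal D(\mu)\tilde h=\tilde h$. This is the same trick the paper uses for breathers in Theorem \ref{theorem non breathers}. Then $f+\frac X2\cdot\bar\varphi(1)$ exhibits $X$ as a gradient field for $\tilde h(1)$. The paper instead identifies $\tilde h$ directly with the self-similar flow of the Conlon--Deruelle expander asymptotic to $\pi_*\tilde h_0$ via \cite{longteng1}. Your version proves self-similarity intrinsically, without invoking the existence of that expander. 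Both versions rest on the same potential-based uniqueness theorem.
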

  Due to the results in \cite{ConlonDeruelleJDG}, if $(\mathcal{C},\pi_*\tilde h_0)$ is a K\"ahler cone (see Subsection \ref{subsection of K cones} for definitions), there exists $(M,\tilde h,X)$ an AC gradient K\"ahler-Ricci expander with bounded curvature such that the corresponding self-similar solution converges locally smoothly to $\pi_*\tilde h_0$ on $\mathcal{C}\setminus\{o\}$. By the work of \cite{ConlonDeruelleSun,longteng1}, such a gradient K\"ahler-Ricci expander $(M,\tilde h,X)$ is in fact unique. Therefore, \ref{Theorem B iv} in Theorem \ref{relation between two limit sets} implies that $(M,\tilde h,X)=(M,\tilde h(1),X)$.
\begin{center}

\tikzset{every picture/.style={line width=0.75pt}} 

\begin{tikzpicture}[x=0.75pt,y=0.75pt,yscale=-1,xscale=1]

\draw  [color={rgb, 255:red, 0; green, 0; blue, 0 }  ,draw opacity=1 ][line width=0.75]  (254.5,83.28) .. controls (215.28,118.81) and (191.87,154.21) .. (184.25,189.5) .. controls (176.06,154.33) and (152.06,119.28) .. (112.26,84.34) ;
\draw    (528.47,75.86) .. controls (534.73,109.07) and (554.84,189.48) .. (571.9,191.08) .. controls (588.95,192.68) and (609.73,91.15) .. (613.72,74.2) ;
\draw    (318.45,231.09) .. controls (340.16,251.82) and (361.88,270.88) .. (351.66,292.08) .. controls (341.44,313.28) and (360.5,340.99) .. (377.56,342.59) .. controls (394.61,344.19) and (416.64,313.9) .. (404.94,289.08) .. controls (393.24,264.25) and (403.7,256.79) .. (436.67,228.6) ;
\draw  [dash pattern={on 4.5pt off 4.5pt}]  (319.82,252.9) -- (217.01,195.02) ;
\draw [shift={(215.26,194.04)}, rotate = 29.38] [color={rgb, 255:red, 0; green, 0; blue, 0 }  ][line width=0.75]    (10.93,-3.29) .. controls (6.95,-1.4) and (3.31,-0.3) .. (0,0) .. controls (3.31,0.3) and (6.95,1.4) .. (10.93,3.29)   ;
\draw  [dash pattern={on 4.5pt off 4.5pt}]  (424.77,254.97) -- (529.25,187.25) ;
\draw [shift={(530.93,186.16)}, rotate = 147.05] [color={rgb, 255:red, 0; green, 0; blue, 0 }  ][line width=0.75]    (10.93,-3.29) .. controls (6.95,-1.4) and (3.31,-0.3) .. (0,0) .. controls (3.31,0.3) and (6.95,1.4) .. (10.93,3.29)   ;
\draw  [dash pattern={on 4.5pt off 4.5pt}]  (474.63,135.18) -- (250.64,134.77) ;
\draw [shift={(248.64,134.77)}, rotate = 0.11] [color={rgb, 255:red, 0; green, 0; blue, 0 }  ][line width=0.75]    (10.93,-3.29) .. controls (6.95,-1.4) and (3.31,-0.3) .. (0,0) .. controls (3.31,0.3) and (6.95,1.4) .. (10.93,3.29)   ;
\draw  [color={rgb, 255:red, 0; green, 0; blue, 0 }  ,draw opacity=1 ][fill={rgb, 255:red, 80; green, 227; blue, 194 }  ,fill opacity=1 ] (154.5,126.5) .. controls (154.5,131.19) and (167.6,135) .. (183.75,135) .. controls (199.9,135) and (213,131.19) .. (213,126.5) .. controls (213,121.81) and (199.9,118) .. (183.75,118) .. controls (167.6,118) and (154.5,121.81) .. (154.5,126.5) -- cycle ;
\draw  [color={rgb, 255:red, 0; green, 0; blue, 0 }  ,draw opacity=1 ][fill={rgb, 255:red, 80; green, 227; blue, 194 }  ,fill opacity=1 ] (350.5,309.5) .. controls (350.5,313.09) and (363.26,316) .. (379,316) .. controls (394.74,316) and (407.5,313.09) .. (407.5,309.5) .. controls (407.5,305.91) and (394.74,303) .. (379,303) .. controls (363.26,303) and (350.5,305.91) .. (350.5,309.5) -- cycle ;
\draw  [color={rgb, 255:red, 0; green, 0; blue, 0 }  ,draw opacity=1 ][fill={rgb, 255:red, 80; green, 227; blue, 194 }  ,fill opacity=1 ][line width=0.75]  (543.5,132.5) .. controls (543.5,136.09) and (555.81,139) .. (571,139) .. controls (586.19,139) and (598.5,136.09) .. (598.5,132.5) .. controls (598.5,128.91) and (586.19,126) .. (571,126) .. controls (555.81,126) and (543.5,128.91) .. (543.5,132.5) -- cycle ;

\draw (147.65,193.76) node [anchor=north west][inner sep=0.75pt]    {$\left(\mathcal{C} ,\pi _{*}\tilde{h}_{0}\right)$};
\draw (110.51,120.79) node [anchor=north west][inner sep=0.75pt]  [color={rgb, 255:red, 208; green, 2; blue, 27 }  ,opacity=1 ] [align=left] {link $\displaystyle S$};
\draw (352.81,349.75) node [anchor=north west][inner sep=0.75pt]    {$( M,g_{0})$};
\draw (497.26,128.89) node [anchor=north west][inner sep=0.75pt]  [color={rgb, 255:red, 208; green, 2; blue, 27 }  ,opacity=1 ] [align=left] {link $\displaystyle S$};
\draw (542.13,194.6) node [anchor=north west][inner sep=0.75pt]    {$\left( M,\tilde{h}( t)\right)_{t\in ( 0,\infty )}$};
\draw (305.41,280.63) node [anchor=north west][inner sep=0.75pt]  [color={rgb, 255:red, 208; green, 2; blue, 27 }  ,opacity=1 ] [align=left] {link $\displaystyle S$};
\draw (220.17,239.97) node [anchor=north west][inner sep=0.75pt]    {$ \begin{array}{l}
\mathcal{D} '( \lambda _{n}) g _{0}\\
\ \lambda _{n}\rightarrow 0^{+}
\end{array}$};
\draw (224.37,253.99) node [anchor=north west][inner sep=0.75pt]    {$$};
\draw (454.33,238.15) node [anchor=north west][inner sep=0.75pt]    {$ \begin{array}{l}
\mathcal{D}( \lambda _{n})\tilde{g }( t)\\
\ \ \lambda _{n}\rightarrow 0^{+}
\end{array}$};
\draw (343.47,114.23) node [anchor=north west][inner sep=0.75pt]    {$t\rightarrow 0^{+}$};
\draw (255.4,136.08) node [anchor=north west][inner sep=0.75pt]   [align=left] {\emph{pointed Gromov-Hausdorff topology}\\};
\draw (314.47,154.45) node [anchor=north west][inner sep=0.75pt]   [align=left] {\emph{smoothly locally}};

\end{tikzpicture}
\end{center}
The above picture illustrates the following phenomenon: let $\{\lambda_n\}_{n\in\mathbb{N}_0}$ be a sequence of positive numbers with $\lambda_n \to 0^+$. Suppose that the dilated Kähler metrics $\mathcal{D}'(\lambda_n)g_0$ converge locally smoothly to $\tilde h_0$ on $M \setminus E$. In other words, $g_0$ is asymptotic to $\tilde h_0$ at geometric infinity along the sequence $\{\lambda_n\}_{n\in\N_0}$. Then, up to passing to a subsequence, the dilated Kähler–Ricci flows $\mathcal{D}(\lambda_n)\tilde g(t)$ converge locally smoothly to the Kähler–Ricci flow emanating from $(\mathcal{C}, \pi_* \tilde h_0)$, in the sense of both pointed Gromov–Hausdorff convergence and local smooth convergence. Conversely, the reverse implication also holds.

The K\"ahler-Ricci flow $\tilde g(t)$ in Theorem \ref{main estimate thm} is called \emph{asymptotically self-similar} if $\varkappa(g_0)$ contains only a single point, which must then represent a self-similar solution.
Likewise, the initial Kähler metric $g_0$ is said to be \emph{asymptotically conical} if $\kappa(\omega_0)$ reduces to a singleton.
\begin{mcor}\label{singleton}
    If $g_0$ is $a-$close to $g$ as in Theorem \ref{main estimate thm}, then the flow $\tilde g(t)_{t\in [0,\infty)}$  is asymptotically self-similar if and only if $g_0$ is asymptotically conical. When in this case, let $(\mathcal C,g_\mathcal{C})$ be a K\"ahler cone which is the singular model of $(M,g,X)$ as in Theorem \ref{theorem of conlonderuellesun},
    \begin{equation*}
        \varkappa(g_0)=\{(g'(t))_{t>0}\};\quad \kappa(g_0)=\{\pi^*g_\mathcal{C}'\}.
    \end{equation*}
    Here $g_\mathcal{C}'$ is a K\"ahler conical metric defined on $\mathcal{C}$, $(M,g'(1),X)$ is the unique gradient K\"ahler-Ricci expander with bounded curvature such that the corresponding self-similar solution $\pi_*g'(t)$ converges locally smoothly to $g_\mathcal{C}'$ on $\mathcal{C}\setminus\{o\}$ as $t$ tends to 0.
\end{mcor}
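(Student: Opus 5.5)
The corollary should follow from Theorem \ref{relation between two limit sets}, once we check that the correspondence $\kappa(g_0)\leftrightarrow\varkappa(g_0)$ given there is a well-defined bijection. The plan is to show that the map sending $\tilde h_0\in\kappa(g_0)$ to a flow $\tilde h(\cdot)\in\varkappa(g_0)$ emanating from it is injective, surjective and single-valued. Both limit sets are nonempty by Theorem \ref{relation between two limit sets}, so one is a singleton iff the other is.

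\emph{Well-definedness and injectivity.} By part \ref{Theorem B ii}, each flow $\tilde h(\cdot)\in\varkappa(g_0)$ converges locally smoothly on $M\setminus E$ to a metric $\tilde h_0$. This limit is unique, since a limit of $\tilde h(t)$ as $t\to0^+$ is unique. So the map $\varkappa(g_0)\to\kappa(g_0)$, $\tilde h\mapsto\tilde h_0$, is well defined, and by Theorem \ref{relation between two limit sets} it is surjective. Hence if $\varkappa(g_0)$ is a singleton, so is $\kappa(g_0)$.

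\emph{Injectivity.} We must show that two flows in $\varkappa(g_0)$ with the same initial trace $\tilde h_0$ coincide. This is the main obstacle: it is a uniqueness statement for Kähler–Ricci flows starting from the singular cone metric $\pi_*\tilde h_0$. Both flows satisfy the uniform bounds of part \ref{Theorem B i}, namely comparability to $g(t)$ and the weighted derivative bounds. I would write both as potential flows $\tilde h_i(t)=g(t)+i\partial\bar\partial\varphi_i(t)$, the $i\partial\bar\partial$-lemma being available on the resolution of the cone. Then $\varphi_1-\varphi_2$ solves a parabolic equation $\partial_t\psi=\log(\tilde\omega_1^n/\tilde\omega_2^n)$, linearizable with uniformly elliptic coefficients by \ref{Theorem B i}. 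Its initial trace vanishes on $M\setminus E$, and it grows at most like $t+d_{g(t)}^2$. A maximum principle with barriers adapted to the self-similar scaling, as in \cite{ConlonDeruelleSun,longteng1}, should then force $\psi\equiv$ a function of $t$ only, hence $\tilde h_1=\tilde h_2$. The delicate points are controlling $\psi$ near $E$ as $t\to 0^+$ and at spatial infinity. If this is too hard directly, I would instead use that both flows arise as limits of $\mathcal D(\lambda_n)\tilde g$ along sequences whose dilations of $g_0$ converge to the same $\tilde h_0$. I would then try to derive uniqueness from the stability estimate of Theorem \ref{main estimate thm} applied to differences. Granting injectivity, $\kappa(g_0)$ a singleton forces $\varkappa(g_0)$ to be a singleton.

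\emph{Identification of the limits.} Suppose both sets are singletons, $\kappa(g_0)=\{\tilde h_0\}$. The set $\kappa(g_0)$ is invariant under $\mathcal D'(\mu)$ for every $\mu>0$: if $\mathcal D'(\lambda_n)g_0\to\tilde h_0$, then $\mathcal D'(\mu\lambda_n)g_0=\mathcal D'(\mu)\mathcal D'(\lambda_n)g_0\to\mathcal D'(\mu)\tilde h_0$. So $\mathcal D'(\mu)\tilde h_0=\tilde h_0$ for all $\mu$, i.e. $\mu\Phi_\mu^*\tilde h_0=\tilde h_0$. Pushing forward by $\pi$, the metric $g'_{\mathcal C}:=\pi_*\tilde h_0$ is homogeneous of degree $2$ under the radial flow of $X$ on $\mathcal C\setminus\{o\}$, and $X$ is real holomorphic, so $g'_{\mathcal C}$ is a Kähler cone metric. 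Then $\kappa(g_0)=\{\pi^*g'_{\mathcal C}\}$. Likewise, $\varkappa(g_0)=\{g'(\cdot)\}$ is invariant under $\mathcal D(\mu)$, so $g'(t)=\mathcal D(t)g'(1)=t\Phi_t^*g'(1)$, and $(M,g'(1),X)$ is a gradient expander with self-similar flow $g'(t)$. Part \ref{Theorem B iv}, together with the uniqueness of \cite{ConlonDeruelleJDG,ConlonDeruelleSun,longteng1}, identifies it as the unique AC expander whose self-similar flow converges to $g'_{\mathcal C}$ on $\mathcal C\setminus\{o\}$.
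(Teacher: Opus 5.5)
\textbf{There is a genuine gap in the direction $\kappa(g_0)$ singleton $\Rightarrow$ $\varkappa(g_0)$ singleton.}

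Your treatment of the other direction, $\varkappa(g_0)$ singleton $\Rightarrow$ $\kappa(g_0)$ singleton, is fine. It goes through surjectivity of the trace map $\tilde h\mapsto\lim_{t\to0^+}\tilde h(t)$, and is more explicit than the paper's. The converse, however, rests entirely on injectivity of that map, which you do not prove (``should then force'', ``Granting injectivity'').

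As you formulate it, injectivity is a uniqueness theorem for K\"ahler--Ricci flows coming out of an \emph{arbitrary}, non-conical singular metric $\pi_*\tilde h_0$. Your sketch leaves the hard points open. You only know that $i\partial\bar\partial(\varphi_1-\varphi_2)$ has zero initial trace. To make $\psi(0)$ vanish you must remove a time-dependent constant and a pluriharmonic function on $\mathcal C\setminus\{o\}$, which needs the Liouville lemma for $JX$-invariant pluriharmonic functions. After that you still need the barrier argument near $E$ and at infinity. Such a statement is true: it is essentially Proposition~\ref{prop: uniqueness of bounded sols}, proved later in the paper along the route you outline. But it is not contained in Theorem~\ref{relation between two limit sets}, and it is not needed here.

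\textbf{The missing idea.} Run your dilation argument under the sole hypothesis $\kappa(g_0)=\{\tilde h_0\}$, before knowing anything about $\varkappa(g_0)$.
\begin{itemize}
\item Invariance of $\kappa(g_0)$ under $\mathcal D'(\mu)$ gives $\mu\Phi_\mu^*\tilde h_0=\tilde h_0$ for all $\mu>0$. Using $JX$-invariance (see the next paragraph), $g'_{\mathcal C}:=\pi_*\tilde h_0$ is then a K\"ahler cone metric.
\item Take any $\tilde h\in\varkappa(g_0)$. By part \ref{Theorem B ii} of Theorem~\ref{relation between two limit sets}, its initial trace lies in $\kappa(g_0)$, so it equals $\tilde h_0$.
\item Since $\tilde h_0$ is conical, part \ref{Theorem B iv} applies. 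It already packages the uniqueness theorem of \cite{longteng1} for flows emerging from a K\"ahler cone, after the cohomological check. It forces $\tilde h$ to be the self-similar solution $g'(t)$ of the expander asymptotic to $g'_{\mathcal C}$.
\end{itemize}
Hence $\varkappa(g_0)=\{g'\}$. Uniqueness is only needed for conical initial data, where it is already known. This is exactly how the paper argues.

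\textbf{A smaller point in the conicality step.} The identity $\mu\Phi_\mu^*\tilde h_0=\tilde h_0$ together with holomorphy of $X$ does not by itself make $\pi_*\tilde h_0$ a K\"ahler cone with Euler field $X$. For example, on $\C^*$ take $e^{2u(\theta)}|dz|^2$ with $u$ non-constant and periodic. It is K\"ahler and homogeneous of degree $2$ under $r\partial_r$, but its curvature is $-e^{-2u}r^{-2}u''\not\equiv0$, so it is not a cone.

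You must use the $JX$-invariance inherited from $g_0$. The paper writes $\pi_*\tilde h_0=\omega_{\mathcal C}+i\partial\bar\partial\psi'$ with $JX\cdot\psi'=0$ and derives $i\partial\bar\partial(X\cdot\psi'-2\psi')=0$. The Liouville lemma, together with $|\psi'|,|X\cdot\psi'|\lesssim r^2$, then gives $X\cdot\psi'=2\psi'$. This step is needed both for the identification $\kappa(g_0)=\{\pi^*g'_{\mathcal C}\}$ and to be entitled to invoke part \ref{Theorem B iv}.
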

A byproduct of Theorem \ref{main estimate thm} and \ref{relation between two limit sets} is the existence of Kähler–Ricci flows with singular initial data that are not necessarily conical. Indeed, for a singular metric that remains \emph{close} to a Kähler cone metric $\omega_\mathcal{C}$ one can naturally construct a Kähler–Ricci flow emerging from it.
\begin{mcor}[K\"ahler-Ricci flow with singular data]\label{singular data}
 Let $(\mathcal C,g_\mathcal{C})$ be a K\"ahler cone which is the singular model of $(M,g,X)$ as in Theorem \ref{theorem of conlonderuellesun}. There exists an $a'>0$  such that for all $\omega_{\operatorname{sing}}=\omega_\mathcal{C}+i\partial\bar\partial\varphi_{\operatorname{sing}}$ being a smooth K\"ahler form on $\mathcal{C}\setminus\{o\}$ with the following properties:
    \begin{enumerate}
        \item Let $r$ denote the radial function, we have
        \begin{equation*}
           \sup_{\mathcal{C}\setminus\{o\}} \max\{r^{-2}|\varphi_{\operatorname{sing}}|,r^{-1}|\nabla^{g_\mathcal C}\varphi_{\operatorname{sing}}|_{g_\mathcal{C}},|(\nabla^{g_\mathcal{C}})^2\varphi_{\operatorname{sing}}|_{g_\mathcal{C}}\}\le a'.
        \end{equation*}
        \item For each $k\in \N^*$, there exists a constant $C_k>0$ such that
        \begin{equation*}
           \sup_{\mathcal{C}\setminus\{o\}}  r^k|(\nabla^{g_\mathcal{C}})^{k+2}\varphi_{\operatorname{sing}}|_{g_\mathcal{C}}\le C_k.
        \end{equation*}
        \item The K\"ahler form $\omega_{\operatorname{sing}}$ is $Jr\partial_r-$invariant.
    \end{enumerate}
    Then there exists an immortal smooth K\"ahler-Ricci flow $\omega_{\operatorname{smooth}}(t)_{t\in (0,\infty)}$ on $M$ such that \ref{Theorem B i}-\ref{Theorem B iv} in Theorem \ref{relation between two limit sets} hold for $(g_{\operatorname{smooth}}(t))_{t\in (0,\infty)}$ on $M$ and $\pi^*\omega_{\operatorname{sing}}$ on $M\setminus E$.
\end{mcor}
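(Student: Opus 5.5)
The plan is to manufacture a smooth initial metric on $M$ that is $a$-close to $g$ and whose $\kappa$-limit set is exactly $\{\pi^*\omega_{\operatorname{sing}}\}$. Theorem \ref{main estimate thm} then gives a flow, and Theorem \ref{relation between two limit sets} turns a blow-down limit of that flow into the required $\omega_{\operatorname{smooth}}(t)$.

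\textbf{Step 1: the initial metric.} Transplant the potential to $M$. Fix a cutoff $\chi$ on $\mathcal C$ with $\chi=0$ on $\{r\le R\}$ and $\chi=1$ on $\{r\ge 2R\}$, and set
\begin{equation*}
\omega_0:=\omega+i\partial\bar\partial\big(\pi^*(\chi\varphi_{\operatorname{sing}})\big),
\end{equation*}
where $\omega$ is the Kähler form of $g$. By Theorem \ref{theorem of conlonderuellesun}, $\pi^*\omega_{\mathcal C}$ and $\omega$ agree at infinity up to decaying errors. Using conditions (1)--(2), I will check that $\omega_0$ is $a$-close to $g$ in the sense of Definition \ref{a closeness} once $a'$ is small and $R$ is large.

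\textbf{Step 2: the cutoff region.} This is the step I expect to be the main obstacle. On $\{R\le r\le 2R\}$ we have $|\nabla\chi|\lesssim r^{-1}$ and $|\nabla^2\chi|\lesssim r^{-2}$. The scale-invariant bounds in condition (1) then control the extra terms $\varphi\nabla^2\chi$ and $\nabla\chi\nabla\varphi$ by a multiple of $a'$. So the $C^2$ smallness needed for positivity and for $a$-closeness persists. Condition (2) supplies the weighted higher-derivative bounds that Definition \ref{a closeness} presumably requires. Two points need care: the smallness must be measured at scale $r$, and the mismatch between $g$ and $\pi^*g_{\mathcal C}$ must decay fast enough.

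\textbf{Step 3: the $\kappa$-limit set.} Next I identify $\kappa(\omega_0)$. Under $\pi$, the flow $\Phi_\lambda$ corresponds to the cone dilation $r\mapsto\lambda^{-1/2}r$. The cone metric $\omega_{\mathcal C}$ is invariant under $\mathcal D'(\lambda)$, and condition (1) says $\varphi_{\operatorname{sing}}$ is quadratic-order. One computes
\begin{equation*}
\mathcal D'(\lambda)\omega_{\operatorname{sing}}=\omega_{\mathcal C}+i\partial\bar\partial\big(\lambda\,\varphi_{\operatorname{sing}}\circ\Phi_\lambda\big).
\end{equation*}
Condition (3) says $\omega_{\operatorname{sing}}$ is invariant under the rotation $Jr\partial_r$. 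This alone does not give dilation invariance, so I would instead argue through the limit. By the uniform bounds (1)--(2) and Arzelà--Ascoli, the dilates $\mathcal D'(\lambda_n)\omega_0$ subconverge on $M\setminus E$. Since $\chi\to1$ on compact sets under dilation and the error $\omega-\pi^*\omega_{\mathcal C}$ disappears in the limit, every limit coincides with a limit of the dilates of $\pi^*\omega_{\operatorname{sing}}$.

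I am unsure whether the statement intends $\omega_{\operatorname{sing}}$ to be exactly dilation invariant. The cleanest route is the following. Apply Theorem \ref{relation between two limit sets} directly with $\tilde h_0$ taken to be the limit of $\mathcal D'(\lambda_n)\pi^*\omega_{\operatorname{sing}}$. If $\omega_{\operatorname{sing}}$ is homogeneous (for instance, $\varphi_{\operatorname{sing}}$ of degree $2$), then this limit is $\pi^*\omega_{\operatorname{sing}}$ itself. Condition (3) then enters Theorem \ref{relation between two limit sets}\ref{Theorem B iv} when one identifies a Kähler cone structure.

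\textbf{Step 4: conclusion.} Theorem \ref{relation between two limit sets} yields $\tilde h(t)\in\varkappa(g_0)$ satisfying \ref{Theorem B i}--\ref{Theorem B iv} with $\tilde h_0=\pi^*\omega_{\operatorname{sing}}$. Set $\omega_{\operatorname{smooth}}(t):=\tilde h(t)$. Immortality and smoothness are inherited from the local smooth convergence of the dilated flows, which is controlled uniformly by Theorem \ref{main estimate thm}(b).
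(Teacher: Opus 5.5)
Your Steps 3--4 do not prove the statement for general $\omega_{\operatorname{sing}}$. With a single initial metric $\omega_0$ whose cutoff sits at a fixed radius $R$, Theorem \ref{relation between two limit sets} only produces flows whose initial trace lies in $\kappa(\omega_0)$. Such a trace is a blow-down limit $\lim_n\mathcal D'(\lambda_n)\pi^*\omega_{\operatorname{sing}}$ with $\lambda_n\to0^+$. Since $\Phi_\lambda$ acts on the cone by $r\mapsto\lambda^{-1/2}r$, this limit only records how $\omega_{\operatorname{sing}}$ behaves as $r\to\infty$. Everything $\omega_{\operatorname{sing}}$ does on bounded regions is lost, in particular near the apex, which your cutoff removes anyway. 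The limit equals $\pi^*\omega_{\operatorname{sing}}$ only if $\omega_{\operatorname{sing}}$ is invariant under the dilations along $(\lambda_n)$. As you note yourself, hypothesis (3) gives invariance under the rotation $Jr\partial_r$, not under $r\partial_r$.

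Here is a concrete failure. Take $\varphi_{\operatorname{sing}}=\varepsilon\,\eta(r)r^2$ with $\eta\in C^\infty_c((1,2))$ and $\varepsilon$ small; this satisfies (1)--(3). For $R\ge 2$ your $\omega_0$ is just $\omega$, so $\kappa(\omega_0)=\{\pi^*\omega_{\mathcal C}\}$. The flow you obtain is then the self-similar solution $g(t)$, whose initial trace is $\omega_{\mathcal C}\neq\omega_{\operatorname{sing}}$.

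Falling back on homogeneous data does not rescue the argument. A dilation-invariant $\omega_{\operatorname{sing}}$ is a K\"ahler cone metric (cf.\ the proof of Corollary \ref{singleton}). That case is already covered by Theorem \ref{relation between two limit sets}\ref{Theorem B iv} and \cite{ConlonDeruelleJDG}. The corollary is needed later precisely for non-conical data, such as the log-periodic metrics behind Theorem \ref{breathers}.

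The missing idea is to rescale the data before gluing and to use that the estimates of Theorem \ref{main estimate thm} are uniform.
\begin{itemize}
\item For each $s>0$, the paper sets $\omega_0^s=\omega+i\partial\bar\partial\big(\chi(r/N)\,s^{-1}\Phi_{1/s}^*\varphi_{\operatorname{sing}}\big)$.
\item Because (1)--(2) are scale invariant, every $\omega_0^s$ is $a$-close to $g$ with $s$-independent constants.
\item Hence the Shi flows $\omega_s(t)$ obey $s$-independent bounds, and so do the rescaled flows $\omega^s(t):=s\Phi_s^*\omega_s(t/s)$.
\item The rescaled flows start at $\omega(s)+i\partial\bar\partial\big(\chi(r/(\sqrt{s}N))\varphi_{\operatorname{sing}}\big)$. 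This tends to $\omega_{\operatorname{sing}}$ on all of $\mathcal C\setminus\{o\}$ as $s\to0$, because the cutoff scale $\sqrt{s}N$ shrinks to the apex.
\item A subsequential limit of $\omega^s(t)$ is $\omega_{\operatorname{smooth}}(t)$. The arguments of Claims \ref{claim 2} and \ref{claim 3} then apply with $\tilde h_0=\pi^*\omega_{\operatorname{sing}}$.
\end{itemize}
If you want to keep your $\kappa$-limit strategy, you would need a single $\omega_0$ that contains rescaled copies $\mathcal D'(\lambda_n^{-1})\varphi_{\operatorname{sing}}$ on disjoint, increasingly wide annuli, as in the proof of Theorem \ref{universal initial data}. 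That would make $\pi^*\omega_{\operatorname{sing}}\in\kappa(\omega_0)$; a fixed cutoff at scale $R$ cannot achieve this.
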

In this article, we are motivated by the question of
whether or not asymptotically self-similar behavior is generic. In terms of dynamical systems, this amounts to studying the $\varkappa$-limit set of general solutions. Is the typical $\varkappa$-limit set a singleton or do most solutions have a bigger $\varkappa$-limit set? In fact, there exist solutions whose $\varkappa$-limit set, in terms of self-similar variables (see Definition \ref{self-similar variables}), are dense in a metric ball of AC expanders. This is the extreme opposite of a solution whose $\varkappa$-limit is a singleton, i.e. an asymptotically self-similar solution in Corollary \ref{singleton}. Before proving the existence of such \emph{universal initial data}, we equip the space of AC gradient expanders with a natural metric structure.
\begin{definition}
 Let $\xi$ be the Reeb vector field on the K\"ahler cone $(\mathcal{C},g_{\mathcal{C}})$ being the singular model of $(M,g,X)$. For all $\delta>0$, one can define the following metric ball $(\mathcal{B}(g_\mathcal{C},\delta),\operatorname{BL})$ on the moduli space of K\"ahler conical metrics on $\mathcal C$:
    \begin{equation*}  
            \mathcal{B}(g_\mathcal{C},\delta)=\{g'_\mathcal{C}\ | \ g'_\mathcal{C} \textnormal{ is a $\xi-$invariant K\"ahler conical metric},\ \operatorname{BL}_{\mathcal{C}}(g_\mathcal C,g_\mathcal C')\le \delta\}.    
    \end{equation*} 
    Here, for two metrics $g_1,g_2$ defined on $\mathcal{C}\setminus\{o\}$,
    \begin{equation*}
        \operatorname{BL}_{\mathcal{C}}(g_1,g_2):=\sup_{x\in \mathcal{C}\setminus\{o\}}\sup\{\delta\in\R\ |\ e^{-\delta}g_2(x)\le g_1(x)\le e^{\delta}g_2(x)\}.
    \end{equation*}
\end{definition}
As in \cite{ConlonDeruelleJDG,ConlonDeruelleSun} and \cite{longteng1}, it is shown that for each $g_\mathcal{C}'\in  \mathcal{B}(g_\mathcal{C},\delta)$, there is the unique gradient K\"ahler-Ricci expander $(M,g',X)$ with bounded curvature such that the corresponding self-similar solution $\pi_*g'(t)$ converges locally smoothly to $g_\mathcal{C}'$ on $\mathcal{C}\setminus\{o\}$. We denote this correspondence by $\varrho$, and we endow $\varrho( \mathcal{B}(g_\mathcal{C},\delta))$ with the pseudometric $\operatorname{BL}_\infty$ (see Definition \ref{bi-lipschitz norm at infinity}).
\begin{lemma}\label{isometry lemma}
   The space $(\varrho( \mathcal{B}(g_\mathcal{C},\delta)),\operatorname{BL}_\infty)$ is a metric space for any $\delta>0$. Moreover, the map
   \begin{equation*}
     \varrho: (\mathcal{B}(g_\mathcal{C},\delta),\operatorname{BL})\mapsto(\varrho( \mathcal{B}(g_\mathcal{C},\delta)),\operatorname{BL}_\infty)
   \end{equation*}
   is an isometry.
\end{lemma}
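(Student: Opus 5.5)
We do not see Definition \ref{bi-lipschitz norm at infinity}, so I take it in its natural form:
\begin{equation*}
\operatorname{BL}_\infty(h_1,h_2):=\limsup_{x\to\infty}\sup\{\delta\in\R\ |\ e^{-\delta}h_2(x)\le h_1(x)\le e^{\delta}h_2(x)\},
\end{equation*}
or equivalently the limit as $\lambda\to0^+$ of $\operatorname{BL}$ between $\mathcal{D}'(\lambda)h_1$ and $\mathcal{D}'(\lambda)h_2$ on compact subsets of $M\setminus E$. Under this definition, symmetry and the triangle inequality are inherited pointwise from $\operatorname{BL}_\mathcal{C}$, so $\operatorname{BL}_\infty$ is a pseudometric. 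The only metric axiom left is definiteness. This follows once we prove the isometry identity
\begin{equation*}
\operatorname{BL}_\infty(\varrho(g_1'),\varrho(g_2'))=\operatorname{BL}_\mathcal{C}(g_1',g_2').
\end{equation*}
Indeed, $\operatorname{BL}_\mathcal{C}$ is a genuine metric on cone metrics, so if the left side vanishes then $g_1'=g_2'$. Then $\varrho(g_1')=\varrho(g_2')$ by the uniqueness of the expander (\cite{ConlonDeruelleSun,longteng1}). Hence everything reduces to the isometry identity.

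\textbf{Step 1: reduce the dilation to the cone dilation.} For an expander $g'=\varrho(g')$ with self-similar flow $g'(t)=t\Phi_t^*g'$, Definition \ref{D'} gives $\mathcal{D}'(\lambda)g'=\lambda\Phi_\lambda^*g'=g'(\lambda)$. The defining property of $\varrho$ says that $\pi_*g'(\lambda)\to g'_\mathcal{C}$ locally smoothly on $\mathcal{C}\setminus\{o\}$ as $\lambda\to0^+$. I will use the stronger quantitative AC statement from \cite{ConlonDeruelleJDG}: with $r$ the radial function,
\begin{equation*}
|\pi_*g'-g'_\mathcal{C}|_{g_\mathcal{C}'}=O(r^{-2})\quad\textnormal{as } r\to\infty.
\end{equation*}
This follows from $\Phi_\lambda$ being conjugate under $\pi$ to the cone dilation by $\lambda^{-1/2}$, since $X=r\partial_r$ at infinity, together with the decay of the expander to its cone.

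\textbf{Step 2: compare pointwise at infinity.} At a point $x$ with $r(x)$ large we get $\pi_*g'_i=(1+O(r^{-2}))g'_{i,\mathcal C}$ for $i=1,2$. The pointwise optimal constant $\delta(x)$ for the pair $(g_1',g_2')$ on $M$ therefore differs from the constant for $(g'_{1,\mathcal C},g'_{2,\mathcal C})$ at $\pi(x)$ by $O(r^{-2})$.

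\textbf{Step 3: use conical invariance.} Both cone metrics are homogeneous of degree 2 under the same dilation. The pointwise constant for the cone pair is therefore scale invariant, i.e. it is constant along rays and is a function on the link $S$. Taking $\limsup_{x\to\infty}$ thus recovers $\sup_{S}$, which equals $\sup_{\mathcal C\setminus\{o\}}=\operatorname{BL}_\mathcal{C}(g'_{1,\mathcal C},g'_{2,\mathcal C})$. If instead $\operatorname{BL}_\infty$ is defined via $\lim_{\lambda\to0}$ of $\operatorname{BL}$ over compact subsets of $M\setminus E$, the same argument applies: smooth convergence on an annulus containing the link gives the value, and homogeneity propagates it everywhere.

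\textbf{Main obstacle.} The difficulty is Step 1's uniform control. Local smooth convergence alone does not commute with a $\sup$ over a noncompact set. I would first try to handle this purely via homogeneity: restrict the supremum to a fixed annulus $\{1\le r\le2\}$ and invoke scaling invariance, which needs only local uniform convergence there. That argument avoids any decay rate and should suffice.
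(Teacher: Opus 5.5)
Your proposal is correct and follows essentially the same route as the paper, which also compares $\pi_*\varrho(g'_i)$ with $g'_{i,\mathcal C}$ at rate $O(r^{-2})$ (obtained there via pseudolocality). It then uses scaling ($t\to 0$) for one inequality and the triangle inequality with $\operatorname{BL}_\infty(g_i,\pi^*g_{\mathcal C,i})=0$ for the other, which is what your Steps 2--3 do. Your remark that uniform convergence on a single annulus plus homogeneity already suffices is valid, so the decay rate is not actually needed, and your explicit check of definiteness supplies what the paper leaves implicit.
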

\begin{mtheorem}[Universal initial data]\label{universal initial data}
    There exists a $\delta>0$, and $g_0$ a K\"ahler metric on $M$ 
    such that $\kappa(g_0)$ is dense in the metric ball $(\mathcal{B}(g_\mathcal{C},\delta),\operatorname{BL})$. Consequently, its $\varkappa-$limit set $\varkappa(g_0)$ is dense in the metric ball $(\varrho( \mathcal{B}(g_\mathcal{C},\delta)),\operatorname{BL}_\infty)$.
\end{mtheorem}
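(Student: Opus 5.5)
We follow the classical Cazenave--Dickstein--Weissler construction of universal initial data for the heat equation, transplanted to the dilation structure $\mathcal{D}'(\lambda)$ on the cone. The first step is to fix a countable set $\{g'_{\mathcal C,j}\}_{j\in\N}\subset\mathcal{B}(g_\mathcal{C},\delta)$ that is dense for $\operatorname{BL}$. Such a set exists if we restrict to conical metrics written as $\omega_\mathcal{C}+i\partial\bar\partial\varphi_j$ with $\varphi_j$ homogeneous of degree $2$, $\xi$-invariant, and smooth on the link, and approximate by potentials that are smooth and $C^2$-small on the link. We then enumerate this set so that every element appears infinitely often, producing a sequence $(\gamma_n)_{n\in\N}$. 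The choice of $\delta$ is made so that all of these conical metrics, and every metric glued from them below, satisfy the smallness hypotheses of Corollary \ref{singular data}, so that they are $a$-close to $g$ after pulling back by $\pi$ and cutting off near $E$.

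The second step is to glue these cones along dyadic-type annuli whose scales go to infinity very fast. Choose radii $R_1<R_2<\dots$ with $R_{n+1}/R_n\to\infty$, say $R_{n+1}\ge e^{n}R_n$. On the cone, set $\varphi_{\operatorname{sing}}=\sum_n \chi_n\varphi_{\gamma_n}$, where $\chi_n(r)$ is a cutoff equal to $1$ on $[R_n^{(1)},R_n^{(2)}]$, with $R_n^{(1)}=e^{n/3}R_n$ and $R_n^{(2)}=e^{-n/3}R_{n+1}$, and which interpolates logarithmically on the transition regions. A logarithmic cutoff in $\log r$ over intervals of length $\gtrsim n$ has derivatives $|r\partial_r\chi_n|\lesssim 1/n$. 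Hence the conditions $r^{-2}|\varphi|,\ r^{-1}|\nabla\varphi|,\ |\nabla^2\varphi|\le a'$ and the scale-invariant higher-derivative bounds are preserved, with constants controlled by the uniform bounds on the countable family. These uniform bounds hold as long as we enumerate a family with uniform $C^k$ bounds on the link; to keep density we allow the $C^k$ norms to grow with $j$ but make the logarithmic lengths grow fast enough to compensate. Pulling back by $\pi$ and smoothing near $E$ gives $g_0$, which is $a$-close to $g$.

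The third step checks the limit sets. Taking $\lambda_n=(R_n^{(1)}R_n^{(2)})^{-1}$, the metric $\mathcal{D}'(\lambda_n)g_0$ on any compact subset of $M\setminus E$ sees only the region where $\chi_n\equiv 1$, because $\Phi_{\lambda}$ acts as a radial dilation at infinity on the AC end (Theorem \ref{theorem of conlonderuellesun}) and the plateau has logarithmic width tending to infinity. Since $\varphi_{\gamma_n}$ is $2$-homogeneous and the cone part is scale-invariant, $\mathcal{D}'(\lambda_n)g_0\to\pi^*\gamma$ locally smoothly along any subsequence with $\gamma_n=\gamma$ fixed. Thus $\kappa(g_0)$ contains the whole dense set, which proves density in $\mathcal B(g_\mathcal C,\delta)$. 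For $\varkappa(g_0)$, Theorem \ref{relation between two limit sets} provides, for each such limit, a flow in $\varkappa(g_0)$ emanating from the cone $\gamma$; by \ref{Theorem B iv} this flow is the self-similar expander $\varrho(\gamma)$. Density then transfers through the isometry of Lemma \ref{isometry lemma}.

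The main obstacle is the gluing estimate: we must keep the closeness constant below $a'$ uniformly while the family has unbounded higher-order norms, and we must check that the mismatch regions do not accumulate at the chosen scales. The first thing to try is letting the transition widths $\log(R_{n}^{(1)}/R_n)$ grow faster than the $C^{k}$ norms of $\varphi_{\gamma_n}$. A secondary point is to verify that $\Phi_\lambda$ agrees with the cone dilation up to errors that decay on the end; these errors vanish in the limit by the AC decay.
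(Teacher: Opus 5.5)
Your construction of $g_0$ and the proof that $\kappa(g_0)$ is dense match the paper's argument. The paper also takes a countable dense family of $\xi$-invariant link functions and uses each one infinitely often (via a bijection $\N\to\N^2$). It places them on disjoint annuli whose aspect ratio tends to infinity and dilates at the geometric mean of each plateau. Your logarithmic cutoffs are a harmless variant of the paper's cutoffs, which only have bounded scale-invariant derivatives.

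The gap is your claim that $g_0$ is $a$-close to $g$, which you need in order to apply Theorem \ref{relation between two limit sets} to $\varkappa(g_0)$. Condition (2) of Definition \ref{a closeness}, like hypothesis (2) of Corollary \ref{singular data}, requires uniform bounds on $r^k|(\nabla^{g_\mathcal{C}})^{k+2}\varphi_{\operatorname{sing}}|_{g_\mathcal{C}}$. On the $n$-th plateau $\chi_n\equiv 1$, so this quantity equals $r^k|(\nabla^{g_\mathcal{C}})^{k+2}\varphi_{\gamma_n}|_{g_\mathcal{C}}$. By $2$-homogeneity this is a fixed $C^{k+2}$-type norm of $\varphi_{\gamma_n}$ on the link. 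It does not depend on where the plateau sits or how long the transitions are. Lengthening the transitions only controls the terms that contain derivatives of $\chi_n$, so your proposed remedy cannot work.

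No other choice of radii can work either. Suppose $g_0$ satisfied even the $k=1$ bound. Then, by Arzel\`a--Ascoli, the restrictions of the elements of $\kappa(g_0)$ to $\{1\le r\le 2\}$ would form a precompact set in $C^0$. Since $\operatorname{BL}$ between conical metrics is computed on a single link slice, density would force $\mathcal{B}(g_\mathcal{C},\delta)$ to be totally bounded. That is false for $\dim_\C M\ge 2$: there $\operatorname{BL}$ is comparable to a norm on the infinite-dimensional space of $\xi$-invariant functions on $S$.

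So the passage from $\kappa(g_0)$ to $\varkappa(g_0)$, and then through Lemma \ref{isometry lemma}, is unjustified as written. It would need one of the following:
\begin{itemize}
\item versions of Theorems \ref{main estimate thm} and \ref{relation between two limit sets} that assume only bi-Lipschitz closeness at infinity, possibly plus bounded curvature (pushing the plateaus far enough out can give bounded curvature, but not the scale-invariant bounds);
\item a localized pseudolocality argument on each plateau.
\end{itemize}
The paper's proof does not supply this ingredient either. It checks only $|\omega_0-\omega|_{g_\mathcal{C}}\le 501C(n)\varepsilon$ before declaring $\omega_0$ to be $a$-close. The density of $\kappa(g_0)$ itself is unaffected.
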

Finally, we study the K\"ahler--Ricci flows on $(M,g,X)$ from the perspective of dynamical systems. We introduce a metric space $(\mathcal{N}_C^B,d)$ consisting of immortaal K\"ahler--Ricci flows on $(M,g,X)$ such that, together with dilation group of the K\"ahler--Ricci flow $\mathcal{D}(\lambda)$ for $\lambda\ge 1$, it gives rise to a topological dynamical system. Moreover, this dynamical system is semi-conjugated to a dynamical system of K\"ahler metrics on the K\"ahler cone. We refer the reader to Section \ref{section dyn system} for a precise discussion. As a direct consequence, we obtain the existence of infinitely many non-trivial Ricci breathers on $(M,g,X),$ i.e. the Ricci flow which is not the self-similar solutions associated to K\"ahler-Ricci expanders:

\begin{mtheorem}[Non-trivial Ricci breathers]\label{breathers}
  For each $\lambda>1$, on the AC gradient K\"ahler-Ricci expander $(M,g,X)$ there are infinitely many non-trivial Ricci breathers, which are immortal non self-similar K\"ahler-Ricci flows with type III singularity, such that each $\tilde \omega(t)_{t>0}$ of them satisfies
   \begin{equation*}
       \lambda\Phi_\lambda^*\tilde \omega\left(\frac{t}{\lambda}\right)=\tilde\omega(t).
   \end{equation*}
Here $\Phi_t$ is the flow of $-\frac{X}{2t}$ for $t>0$.
\end{mtheorem}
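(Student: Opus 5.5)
The plan is to construct breathers as flows emanating from \emph{discretely self-similar} singular initial data on the cone, and to run them through Theorem~\ref{main estimate thm}, Theorem~\ref{relation between two limit sets} and Corollary~\ref{singular data}. Fix $\lambda>1$. On $\mathcal{C}$ the dilation $\Phi_\lambda$ corresponds, via $\pi$, to the cone dilation $r\mapsto \lambda^{-1/2}r$ (up to the Reeb rotation, which is harmless since everything will be $\xi$-invariant). Hence a form $\omega_{\operatorname{sing}}$ on $\mathcal{C}\setminus\{o\}$ satisfies $\lambda\Phi_\lambda^*\omega_{\operatorname{sing}}=\omega_{\operatorname{sing}}$ exactly when it is invariant under this single discrete dilation, but not necessarily under the whole group.

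\textbf{Step 1: building the data.} Take $\omega_{\operatorname{sing}}=\omega_{\mathcal C}+i\partial\bar\partial\varphi$, where $\varphi=r^2\psi(\log r, \cdot)$ and $\psi$ is $\xi$- and $Jr\partial_r$-invariant on the link, periodic in $\log r$ with period $\tfrac12\log\lambda$, and small in $C^{2}$ with all higher weighted derivatives bounded. Then $r^{-2}|\varphi|$, $r^{-1}|\nabla\varphi|$ and $|\nabla^2\varphi|$ are bounded by a small constant, and $r^k|\nabla^{k+2}\varphi|$ is uniformly bounded by periodicity and homogeneity. So hypotheses (1)--(3) of Corollary~\ref{singular data} hold. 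Varying $\psi$ in a one-parameter family, for instance $\psi=\epsilon\,\chi\cos\bigl(4\pi m\log r/\log\lambda\bigr)$ with $m\in\N^*$ and fixed small $\epsilon$, gives infinitely many pairwise distinct such forms. None of them is conical, because they depend genuinely on $\log r$. I will also check that each is still a positive $(1,1)$-form; this holds for small amplitude.

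\textbf{Step 2: the flow and its invariance.} Corollary~\ref{singular data} produces an immortal flow $\tilde\omega(t)$ of type III, comparable to $g(t)$ with the weighted derivative bounds \ref{Theorem B i}, converging to $\pi^*\omega_{\operatorname{sing}}$ on $M\setminus E$ as $t\to0^+$. The rescaled flow $\hat\omega(t):=\lambda\Phi_\lambda^*\tilde\omega(t/\lambda)$ is again a Kähler--Ricci flow with the same bounds, since $\mathcal D(\lambda)$ preserves the estimates in \ref{Theorem B i} by self-similarity of $g(t)$. Its initial trace is $\lambda\Phi_\lambda^*\pi^*\omega_{\operatorname{sing}}=\pi^*\omega_{\operatorname{sing}}$.

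\textbf{Step 3: uniqueness (the main obstacle).} What remains is to show $\hat\omega=\tilde\omega$, that is, uniqueness of flows in the class \ref{Theorem B i} with given initial trace on $M\setminus E$. I would argue through the potentials. Write both flows as $\omega(t)=t\Phi_t^*(\omega_g+i\partial\bar\partial u(t))$ in self-similar variables, so that $u$ solves a parabolic complex Monge--Ampère equation with drift $X$. The difference $w$ of the two potentials solves a linear parabolic equation with uniformly elliptic coefficients, by the comparability in \ref{Theorem B i}. It is of controlled growth $O(t+d^2)$ and tends to $0$ as $t\to0$ away from $E$; near $E$ its contribution shrinks because $E$ collapses to the vertex. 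A maximum principle for type III flows with quadratic-growth barriers, of the kind used to prove Theorem~\ref{main estimate thm}, should then give $w\equiv0$. If this direct route fails, I would instead take the construction of Corollary~\ref{singular data} as a limit of Shi flows from $\mathcal D'(\lambda^{-n})$-dilates of a fixed smooth data with the same periodic tail, which makes the invariance automatic by the construction.

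\textbf{Step 4: non-triviality.} If $\tilde\omega$ were the self-similar solution of some expander, then its trace $\pi^*\omega_{\operatorname{sing}}$ would be invariant under all $\mathcal D'(\mu)$, hence conical, contradicting Step 1. Distinct data give distinct flows by \ref{Theorem B ii}. This yields infinitely many non-trivial breathers.
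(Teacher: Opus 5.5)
The gap is in Step 4. A non-trivial breather must fail to be the self-similar solution $t\Psi_t^*g_E$ of \emph{any} expander $(M,g_E,Y)$, where $\Psi_t$ is generated by $-Y/(2t)$ and $Y$ is an arbitrary soliton field. Your argument only shows that $\tilde\omega$ is not fixed by every $\mathcal D(\mu)$, so it only excludes $Y=X$. If $Y\neq X$, the initial trace is homogeneous under the dilations generated by $Y$, not under $\mathcal D'(\mu)$. The dependence of $\psi$ on $\log r$ then gives no contradiction by itself. This case is not vacuous, because a self-similar flow does not determine its soliton field: the static flat flow on the Gaussian expander is self-similar for the radial field centred at any point.

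The paper closes this case as follows. Let $t\to0$ in $g_E(t)+t\Ric(g_E(t))=\frac12\mathcal L_Yg_E(t)$ and use the curvature bounds to get $\frac12\mathcal L_Yg_{\operatorname{sing}}=g_{\operatorname{sing}}$ on $\mathcal C\setminus\{o\}$. Averaging $Y$ over the torus generated by $JX$, which preserves $g_{\operatorname{sing}}$, one may assume $[Y,JX]=0$. Since $Y$ is real holomorphic, $[Y,X]=-J[Y,JX]=0$. Write $Y=aX+\sum_i a_iV_i$ locally, with $V_i$ tangent to the link; then $[Y,X]=0$ forces $X(a)=0$. On the other hand, for radial data $\psi=\psi(\log r^2)$, the soliton identity reduced to the potentials via the Liouville lemma gives
\begin{equation*}
a=\frac{1+2\varepsilon\psi}{1+2\varepsilon\psi+2\varepsilon\psi'}.
\end{equation*}
So $\partial_ra=0$ yields $u=c(u+u')$ for $u=1+2\varepsilon\psi$. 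This makes $u$ exponential, contradicting that it is periodic and non-constant. The computation uses that $\psi$ depends only on $r$. With a non-constant link factor $\chi$ you would need more work, so take $\chi\equiv1$.

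Three smaller points.
\begin{itemize}
\item Your main route in Step 3 is the paper's (Proposition~\ref{prop: uniqueness of bounded sols}), but the mechanism near $E$ is not that $E$ collapses. First normalize the potentials, using the Liouville lemma and $JX$-invariance, so that $\partial_t w=\log(\omega_1^n/\omega_2^n)$. Comparability then gives $|\partial_t w|\le C$, hence $|w(t)-w(0)|\le Ct$ uniformly on $M$. Here $w(0)$ is pluriharmonic and bounded, hence constant when $n\ge2$; the case $n=1$ is handled by Peachey--Topping. Finally, the maximum principle on $[s,T]$, with a quadratic barrier built from the Hamiltonian of $X$, gives $|w|\le Cs$.
\item Your fallback does not make invariance automatic. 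A subsequential limit $h=\lim_k\mathcal D(\lambda^{-n_k})\tilde g$ only satisfies $\mathcal D(\lambda)h=\lim_k\mathcal D(\lambda^{-n_k+1})\tilde g$, which is another subsequential limit. So uniqueness cannot be bypassed this way.
\item Your family with fixed $\epsilon$ and $m\to\infty$ violates hypothesis (1) of Corollary~\ref{singular data}, and eventually positivity, since $|(\nabla^{g_{\mathcal C}})^2\varphi|_{g_{\mathcal C}}\sim\epsilon m^2/(\log\lambda)^2$. Let $\epsilon=\epsilon_m\to0$ suitably, or fix $m$ and vary the amplitude.
\end{itemize}
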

Perelman proved in \cite{PerelmanEntropy} that, on compact manifolds, every Ricci breather must be a self-similar solution. In the noncompact setting, we refer the reader to \cite{ToppingNonBreather,QSZhangNonbreathers} for further results concerning the existence and non-existence of nontrivial Ricci breathers. In contrast to the breathers constructed by Topping in \cite{ToppingNonBreather}, the nontrivial breathers appearing in Theorem \ref{breathers} are K\"ahler and irreducible.
\subsection{Outline of proof}
We now briefly explain the main steps of the proof and outline the structure of the paper. In Section \ref{section preliminaire}, we review the relevant definitions and properties of K\"ahler cones and expanding gradient K\"ahler--Ricci solitons.

In Section \ref{estimation of KRF}, we prove Theorem \ref{main estimate thm}. The core of the analysis consists of establishing a priori estimates for several quantities along the K\"ahler--Ricci flow. The techniques developed in this section may be of independent interest; see also \cite{longteng2,ChenHallgrenLucas} for related computations.

In Section \ref{Long-time behaviors of flows and asymptotic behaviors of initial values}, we prove Theorem \ref{relation between two limit sets} together with Corollaries \ref{singleton} and \ref{singular data}, relying on the estimates established in Theorem \ref{estimation of KRF}.
In Section \ref{universal initial values}, we construct the universal initial value appearing in Theorem \ref{universal initial data}. 
\section{Preliminaries}\label{section preliminaire}
\subsection{K\"ahler cone}\label{subsection of K cones}
For us, the definition of a Riemannian cone will take the following form.
\begin{definition}\label{cone}
Let $(S, g_{S})$ be a closed Riemannian manifold. The \emph{Riemannian cone} $\mathcal{C}$ with \emph{link} $S$ is defined to be $\R_{>0}\times S$ with metric $g_\mathcal{C} = dr^2 \oplus r^2g_{S}$ up to isometry. The radius function $r$ is then characterised
intrinsically as the distance from the apex in the metric completion.
\end{definition}
The following is a simple computation.

\begin{lemma}\label{scalar}
Let $(S,\,g_{S})$ be a closed Riemannian manifold of real dimension $m$ and let $(\mathcal{C},g_\mathcal{C})$ be the Riemannian cone with link $S$ and radial function $r$. Then the Ricci curvature $\Ric(g_{S})$ of $g_{S}$ and the Ricci curvature $\Ric(g_{\mathcal{C}})$ of the cone metric $g_{\mathcal{C}}$ over $(S,\,g_{S})$ are related by
$$\Ric(g_{\mathcal{C}})=\Ric(g_{S})-(m-1)g_{S}.$$
In particular, the scalar curvatures $R_{g_{0}}$ and $R_{g_{S}}$ of $g_{0}$ and $g_{S}$ respectively are related by
\begin{equation*}
R_{g_{\mathcal{C}}}=\frac{1}{r^{2}}\left(R_{g_{S}}-m(m-1)\right).
\end{equation*}
\end{lemma}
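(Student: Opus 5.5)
The plan is a direct computation using the warped-product structure of the cone. On $\mathcal{C}=\R_{>0}\times S$ with $g_\mathcal{C}=dr^2+r^2g_S$, I will work in a local frame adapted to the product: the unit radial field $\partial_r$ and vector fields $X,Y$ tangent to $S$, lifted so that they are independent of $r$ (so $[\partial_r,X]=0$). Koszul's formula then gives the Levi-Civita connection:
\begin{equation*}
\nabla_{\partial_r}\partial_r=0,\qquad \nabla_{\partial_r}X=\nabla_X\partial_r=\tfrac1r X,\qquad \nabla_XY=\nabla^{g_S}_XY-r\,g_S(X,Y)\,\partial_r.
\end{equation*}
So the level sets $\{r\}\times S$ are totally umbilic with second fundamental form $r\,g_S=\frac1r(r^2g_S)$, and the radial lines are geodesics.

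Next I will compute the curvature.
\begin{itemize}
\item Since $\nabla_{\partial_r}(X/r)=0$, the vector fields $X/r$ are parallel along radial geodesics, so $R(\cdot,\partial_r)\partial_r=0$. Hence $\Ric(\partial_r,\partial_r)=0$.
\item The Codazzi equation with umbilic level sets gives $\Ric(\partial_r,X)=0$.
\item For the tangential directions, the Gauss equation for the level set, with induced metric $r^2g_S$ and second fundamental form $r\,g_S$, gives
\begin{equation*}
R^{\mathcal{C}}(X,Y,Y,X)=r^2R^{S}(X,Y,Y,X)-r^2\big(g_S(X,X)g_S(Y,Y)-g_S(X,Y)^2\big).
\end{equation*}
\end{itemize}

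To get the Ricci tensor on $TS$, I trace over an orthonormal frame of $g_\mathcal{C}$, namely $\{\partial_r, e_i/r\}$ with $\{e_i\}$ a $g_S$-orthonormal frame. The radial term vanishes by the first bullet. The $m$ tangential directions contribute
\begin{equation*}
\Ric(g_\mathcal{C})(X,X)=\Ric(g_S)(X,X)-(m-1)g_S(X,X).
\end{equation*}
Polarizing and combining with the vanishing radial components gives $\Ric(g_\mathcal{C})=\Ric(g_S)-(m-1)g_S$, where the right-hand side is understood as pulled back to $\mathcal{C}$ and extended by zero in the $dr$ directions.

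For the scalar curvature I trace with $g_\mathcal{C}^{-1}$. Its restriction to $T^*S$ is $r^{-2}g_S^{-1}$, so
\begin{equation*}
R_{g_\mathcal{C}}=r^{-2}\big(R_{g_S}-(m-1)m\big).
\end{equation*}

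There is no genuine obstacle here. The only points needing care are the sign convention in the Gauss equation, and keeping track of which metric ($g_S$ or $r^2g_S$) is used when tracing. As a check I will test the formulas on $S=S^m$ round, where the cone is flat $\R^{m+1}$: then $\Ric(g_S)=(m-1)g_S$ and both formulas correctly give zero.
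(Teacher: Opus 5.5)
Your proposal is correct. The connection formulas, the vanishing of the radial and mixed Ricci components (via $\nabla_{\partial_r}\nabla_{\partial_r}X=\nabla_{\partial_r}(X/r)=0$ and Codazzi for umbilic slices with constant factor $1/r$), and the Gauss equation with $h=r\,g_S$ all check out, and you correctly read $\Ric(g_S)-(m-1)g_S$ as having zero $dr$-components; the paper gives no argument beyond calling this ``a simple computation,'' and your warped-product calculation is exactly that computation.
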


We may further impose that a Riemannian cone is K\"ahler, as the next definition demonstrates.
\begin{definition}A \emph{K{\"a}hler cone} is a Riemannian cone $(\mathcal{C},g_\mathcal{C})$ such that $g_\mathcal{C}$ is K{\"a}hler, together with a choice of $g_\mathcal{C}$-parallel complex structure $J_\mathcal{C}$. This will in fact often be unique up to sign. We then have a K{\"a}hler form $\omega_\mathcal{C}(X,Y) = g_\mathcal{C}(J_\mathcal{C}X,Y)$, and $\omega_\mathcal{C} = \frac{i}{2}\p\bar{\p} r^2$ with respect to $J_\mathcal{C}$.
\end{definition}

The vector field $r\partial_{r}$ is real holomorphic and $\xi:=J_\mathcal{C}r\partial_r$ is real holomorphic and Killing \cite[Appendix A]{MartelliSparksYau}. This latter vector field is known as the \emph{Reeb
vector field}. The closure of its flow in the isometry
group of the link of the cone generates the holomorphic isometric action of a real torus on $C_{0}$ that
fixes the apex of the cone. Furthermore, every K\"ahler cone is affine algebraic \cite[Theorem 2.4]{ConlonDeruelleSun}.
\subsection{Asymptotically conical gradient K\"ahler-Ricci expander}
In this section we summarize some basic properties about $(M,g,X)$ which is an AC gradient K\"ahler-Ricci expander, for the details of proof, see \cite{longteng1,longteng2,ChenHallgrenLucas}.
\begin{theorem}[{\cite[Theorem A]{ConlonDeruelleSun}} Structure theorem]\label{theorem of conlonderuellesun}
Let $(M,\,g,\,X)$ be an AC complete gradient K\"ahler-Ricci expander, then there exists a K\"ahler resolution $\pi:M\to \mathcal{C}$ with exceptional set $E$ such that the following hold:
\begin{enumerate}
  \item The variety $\mathcal{C}$ is a K\"ahler cone endowed with K\"ahler conical metric $g_\mathcal{C}$.
  \item Let $r$ denote the radial function on $\mathcal{C}$, then $d\pi(X)=r\partial_{r}$ and
\begin{equation*}
|(\nabla^{g_{\mathcal{C}}})^k(\pi_{*}g-g_{\mathcal{C}})|_{g_{\mathcal{C}}} \leq C_{k}r^{-2-k}\quad\textrm{for all $k\in\mathbb{N}_{0}$}.
\end{equation*}
\end{enumerate}
\end{theorem}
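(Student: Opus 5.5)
The statement is \cite[Theorem A]{ConlonDeruelleSun}, so in the paper it is quoted. If I had to prove it, I would split the argument into a metric part (producing the cone and the decay rate) and a complex-analytic part (producing the resolution $\pi$).

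\textbf{Metric part.} First normalize $f$ so that the standard expander identity
\begin{equation*}
R_g+|\nabla^g f|^2_g=f
\end{equation*}
holds. The quadratic curvature decay in Definition \ref{ACKR expander} then gives $f=\tfrac14 d_g(p,\cdot)^2+O(1)$. Hence $f$ is proper, and its critical set is compact.

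Outside a compact set, $X=\nabla^g f$ is nonvanishing, and the flow of $X/|X|^2$ identifies the end of $M$ with $(R_0,\infty)\times S$, where $S$ is a level set of $f$.

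Next I would consider the self-similar solution $g(t)=t\Phi_t^*g$. The decay of $|(\nabla^g)^k\Rm(g)|$ translates into the bound
\begin{equation*}
|\Ric(g(s))|_{g(s)}\le C\bigl(s+d_{g(s)}(p,\cdot)^2\bigr)^{-1},
\end{equation*}
together with analogous bounds for all derivatives. Since $\partial_t g=-\Ric(g)$, integrating $\Ric(g(s))$ in $s$ on compact subsets of the end shows that $g(t)$ converges smoothly as $t\to0^+$ to a metric $g_{\mathcal C}$.

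By construction the limit $g_{\mathcal C}$ is invariant under the rescaled flow of $X$, so it is a Riemannian cone with $r\partial_r$ corresponding to $X$. Because $X$ is real holomorphic, $\Phi_t$ preserves $J$. Therefore $g_{\mathcal C}$ is Kähler for the same complex structure, and its Reeb field is $JX$.

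The decay rate comes from evaluating the same integral at $t=1$:
\begin{equation*}
g-g_{\mathcal C}=-\int_0^1\Ric(g(s))\,ds,
\end{equation*}
which is $O(r^{-2})$. Derivatives are handled in the same way, giving the estimates $C_k r^{-2-k}$.

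\textbf{Complex-analytic part.} On $M$, the closure of the flow of $JX$ generates a holomorphic isometric torus action. I would use Hörmander $L^2$ estimates, with weights built from $f$, to produce enough holomorphic functions of polynomial growth. Each such function decomposes into torus eigenfunctions that are asymptotic to homogeneous holomorphic functions on the cone.

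One then shows that the ring of polynomial-growth holomorphic functions on $M$ is finitely generated and coincides with the coordinate ring of the affine cone $\mathcal{C}$. This is the step that uses that Kähler cones are affine algebraic. A finite set of generators then defines a proper holomorphic map $\pi:M\to\mathcal{C}$, equivariant with respect to the torus action. The map is biholomorphic on the end, and $E$ is the maximal compact analytic subset.

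\textbf{Main obstacle.} I expect the main difficulty to lie in the algebraic step. One must prove finite generation of the ring and show that $\pi$ is a resolution, i.e.\ biholomorphic off $E$, rather than merely a holomorphic map into some cone. The metric convergence part is comparatively routine, given the Shi-type derivative bounds along the self-similar flow.
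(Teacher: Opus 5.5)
The paper gives no argument for this statement; it simply quotes \cite[Theorem A]{ConlonDeruelleSun}, so your sketch has to stand on its own. The metric half does, up to two small slips. First, with the paper's normalization $\operatorname{Hess}_g f=\Ric(g)+g$, the identity is $R_g+|\nabla^g f|^2_g=2f+\mathrm{const}$, and $f=\tfrac12 d_g(p,\cdot)^2+O(d_g(p,\cdot))$ (cf.\ Lemma~\ref{proper function}). Second, $\lambda\Phi_\lambda^*g_{\mathcal C}=g_{\mathcal C}$ alone does not make $g_{\mathcal C}$ a cone; you also need $X^\flat$ closed. This follows from $g(t)(X,\cdot)=d(t\Phi_t^*f)$ and $\operatorname{Hess}_{g(t)}(t\Phi_t^*f)=g(t)+t\Ric(g(t))\to g_{\mathcal C}$, which give $r^2/2=\lim_{t\to0^+}t\Phi_t^*f$.

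The complex-analytic half, however, has a genuine gap. The tool you name does not work as stated. Suppose $E$ contains a compact curve $C$. Integrating the soliton equation $i\partial\bar\partial f=\Ric(\omega)+\omega$ over $C$ gives $\int_C\Ric(\omega)=-\int_C\omega<0$. For every K\"ahler metric $\hat\omega$ and every smooth weight $\phi$, the form $\Ric(\hat\omega)+i\partial\bar\partial\phi$ differs from $\Ric(\omega)$ by an exact form. Hence the curvature term in H\"ormander's estimate for $\bar\partial$ on functions is never semipositive along $C$. So weights built from $f$ cannot by themselves produce holomorphic functions on $M$; you would need singular weights, a twisted bundle, or Andreotti--Grauert finiteness, and none of these is set up.

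You also misplace the difficulty. Finite generation is not an issue, since it is inherited from the affineness of $\mathcal C$. What must be shown is that the cone's coordinate functions extend from the end across the core \emph{exactly}. Both $d\pi(X)=r\partial_r$ and the transfer of your decay estimate to $\pi_*g$ require $\pi$ to coincide on the end with the $X$-flow identification. Note that for holomorphic $h$ one has $JX\cdot h=i\,X\cdot h$, so torus eigenfunctions are exactly homogeneous, not merely asymptotically so.

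You already hold the ingredient that makes this step easy, namely $\Phi_t^*J=J$.
\begin{enumerate}
\item Let $E$ be the set of points whose forward $X$-orbit stays bounded. It is closed and contained in $\{f\le\max_{\operatorname{Crit}(f)}f\}$, hence compact.
\item Since $X$ is complete and real holomorphic, flow a point of $M\setminus E$ out to the end, identify it there with a point of the cone, and flow back by $r\partial_r$. This defines a biholomorphism $\nu:M\setminus E\to\mathcal C\setminus\{o\}$ with $d\nu(X)=r\partial_r$.
\item Use the one deep input, the affineness of K\"ahler cones \cite[Theorem 2.4]{ConlonDeruelleSun}, to embed $\mathcal C$ by homogeneous generators $h_1,\dots,h_N$ of positive degree.
\item As $x\to E$ the exit time to the end blows up, so $r(\nu(x))\to0$ and $h_i\circ\nu\to0$. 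Extended by $0$ on $E$, each $h_i\circ\nu$ is continuous and holomorphic off its zero set, hence holomorphic on $M$ by Rad\'o's theorem.
\item Set $\pi:=(h_1\circ\nu,\dots,h_N\circ\nu)$. Then $\pi$ is proper, $E=\pi^{-1}(o)$ is analytic, and $\pi|_{M\setminus E}=\nu$, so both assertions in (2) follow directly from your metric estimate.
\end{enumerate}
For $n\ge2$ one can instead use that $i\partial\bar\partial f=\Ric(\omega)+\omega>0$ outside a compact set, so $M$ is $1$-convex and Hartogs extension applies. Either way, no $L^2$ theory on $M$ is needed.
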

From now on, we identify $M\setminus E$ with $\mathcal{C}\setminus\{o\}$ by the K\"ahler resolution $\pi$. The radial function $r$ can be extended by $0$ as a continuous function on $M$. The first results are the celebrated soliton identities (see \cite[Section 2 of Chapter 1]{ChowEtAl}).
\begin{lemma}[Soliton identities]\label{soliton indentities}
Let $f$ be a Hamiltonian of the vector field $X$, then we have
    \begin{equation*}
        \begin{split}
            &\Delta_\omega f=n+R_\omega,\\
            &\nabla^g R_\omega+\Ric(g)(X)=0,\\
            &|\partial f|_g^2+R_\omega=f+\textrm{constant}.
        \end{split}
    \end{equation*}
    Here $n=\dim_\C M$, $\Delta_\omega$ is the K\"ahler Laplacian, $R_\omega=\frac{1}{2}R_g$ is the K\"ahler scalar curvature, $R_g$ is the scalar curvature of $g$.

\end{lemma}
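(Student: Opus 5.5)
The soliton identities follow from the soliton equation $i\partial\bar\partial f=\Ric(\omega)+\omega$ together with the contracted second Bianchi identity and the fact that $X=\nabla^g f$ is real holomorphic. I would derive them in order, taking the trace first, then using Bianchi to get the gradient identity, and finally integrating to get the conservation law.

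\textbf{First identity.} I trace the $(1,1)$-form identity $i\partial\bar\partial f=\Ric(\omega)+\omega$ with respect to $\omega$. With the normalisations of the statement, the trace of $i\partial\bar\partial f$ is the Kähler Laplacian $\Delta_\omega f$. The trace of $\omega$ is $n$. The trace of $\Ric(\omega)$ is $R_\omega=\frac12 R_g$. This gives $\Delta_\omega f=n+R_\omega$ immediately.

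\textbf{Second identity.} I work in Riemannian form, $\operatorname{Hess}_g f=\Ric(g)+g$.
\begin{enumerate}
\item Take the divergence of both sides. On the right, the contracted Bianchi identity gives $\operatorname{div}\Ric=\frac12 dR_g$.
\item On the left, commute derivatives: $\nabla^j\nabla_j\nabla_i f=\nabla_i\Delta_g f+\Ric_{ij}\nabla^j f$.
\item Use the trace $\Delta_g f=R_g+2n$, since the real dimension is $2n$.
\item Combining these yields $\nabla_i R_g+\Ric_{ij}\nabla^jf=\frac12\nabla_iR_g$, i.e. $\frac12\nabla R_g+\Ric(X)=0$.
\end{enumerate}
Because $R_\omega=\frac12R_g$, this is exactly $\nabla^gR_\omega+\Ric(g)(X)=0$.

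\textbf{Third identity.} I compute $\nabla|\nabla f|^2=2\operatorname{Hess}f(\nabla f)=2\Ric(X)+2\nabla f$. Substituting the second identity, this equals $-2\nabla R_\omega+2\nabla f$. Hence $\nabla(|\nabla f|^2+2R_\omega-2f)=0$, so the bracket is constant on the connected manifold $M$.

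It remains to convert $|\nabla f|_g^2$ into $|\partial f|_g^2$. For real $f$, $|\nabla f|_g^2=2|\partial f|_g^2$ under the standard convention, so dividing by $2$ gives $|\partial f|_g^2+R_\omega=f+\text{const}$.

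\textbf{Main obstacle: normalisation constants.} The only delicate point is keeping the factors of $2$ consistent across several conventions:
\begin{itemize}
\item the Kähler Laplacian versus the Riemannian Laplacian;
\item $R_\omega$ versus $R_g$;
\item $|\partial f|^2$ versus $|\nabla f|^2$;
\item "Hamiltonian" of $X$ versus the potential with $X=\nabla f$.
\end{itemize}
I will fix the convention $\Delta_\omega=\frac12\Delta_g$ and $|\partial f|^2=\frac12|\nabla f|^2$, which matches $R_\omega=\frac12R_g$. I will then check each identity against the flat Gaussian expander on $\C^n$, with $f=|z|^2/2$ in suitable units. Any mismatch can be absorbed by the constant or by rescaling, since the statement only asserts equality up to a constant in the last line.
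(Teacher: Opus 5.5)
Your proposal is correct and is essentially the argument the paper relies on: the paper gives no proof of its own but cites the standard derivation in \cite[Section 2 of Chapter 1]{ChowEtAl}, which is exactly your trace, contracted-Bianchi-plus-Bochner, and differentiate-$|\nabla f|^2$ computation. One small point: the conventions you fix, $\Delta_\omega=\tfrac12\Delta_g$ and $|\partial f|_g^2=\tfrac12|\nabla^g f|_g^2$, are precisely those forced by the paper's normalization remark $(\Delta_\omega+\frac{X}{2})f=f$, so no ``absorption'' is needed; a mismatched factor of $2$ could not be absorbed into an additive constant anyway, so drop that closing hedge.
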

 We normalized $f$ such that $f=|\partial f|_g^2+R_\omega+n$, then we naturally have $(\Delta_\omega+\frac{X}{2})f=f$.
 \begin{prop}\label{lower bound of f}
    There exists an $\varepsilon>0$ such that 
    \begin{equation*}
        \inf_M (R_\omega+n)\ge\varepsilon>0.
    \end{equation*}
    Then it follows that on $M$,
    \begin{equation*}
        \Delta_\omega f=R_\omega+n\ge\varepsilon,\quad f=|\partial f|_g^2+R_\omega+n\ge\varepsilon.
    \end{equation*}
\end{prop}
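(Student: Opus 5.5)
The statement to prove is Proposition \ref{lower bound of f}: there is $\varepsilon>0$ with $R_\omega+n\ge\varepsilon$ on $M$, and then the two displayed consequences. I would split this into a behaviour-at-infinity step and a strict-positivity step.

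\emph{Step 1: behaviour at infinity.} By Definition \ref{ACKR expander}, $|\Rm(g)|_g\le C d_g(p,\cdot)^{-2}$, so $R_\omega\to 0$ at infinity. Hence $R_\omega+n\to n>0$ at spatial infinity. So there is a compact set $K\subset M$ with $R_\omega+n\ge n/2$ on $M\setminus K$. It therefore suffices to show that $R_\omega+n>0$ everywhere. Compactness of $K$ and continuity then give a positive minimum $\varepsilon\le n/2$ on $K$.

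\emph{Step 2: the evolution equation.} I would use the standard PDE satisfied by the scalar curvature of an expander. Combining the soliton equation $\Ric(\omega)+\omega=i\partial\bar\partial f$ with Lemma \ref{soliton indentities}, namely $\nabla R_\omega=-\Ric(X)$, the contracted Bianchi identity gives
\begin{equation*}
\Delta_\omega R_\omega+\tfrac12 X\cdot R_\omega=-|\Ric(\omega)|^2-R_\omega,
\end{equation*}
up to the normalisation convention for the Kähler Laplacian. Setting $u:=R_\omega+n$, this becomes
\begin{equation*}
\left(\Delta_\omega+\tfrac{X}{2}\right)u=-|\Ric(\omega)+\omega|^2+\left(\tfrac{R_\omega^2}{n}\cdot 0\right)\dots
\end{equation*}
It is cleaner to write it as
\begin{equation*}
\left(\Delta_\omega+\tfrac{X}{2}\right)u=-|\Ric+\omega|^2+u,
\end{equation*}
using $|\Ric+\omega|^2=|\Ric|^2+2R_\omega+n$. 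This identity gives only an upper bound on the drift Laplacian, so I would not try to read positivity off it directly.

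\emph{Step 3: strict positivity, the main obstacle.} I would instead use Cauchy--Schwarz, $|\Ric+\omega|^2\ge u^2/n$. This gives
\begin{equation*}
\left(\Delta_\omega+\tfrac{X}{2}\right)u\le u-\frac{u^2}{n}.
\end{equation*}
That inequality is again the wrong direction for a minimum principle. So for a lower bound I would fall back on the known argument for expanders, working with $R_\omega$ itself, which satisfies
\begin{equation*}
\left(\Delta+\tfrac{X}{2}\right)R_\omega=-|\Ric|^2-R_\omega\le -\frac{R_\omega^2}{n}-R_\omega .
\end{equation*}
This is a supersolution-type inequality. By Step 1, $R_\omega\to0$ at infinity, so if $\inf u\le 0$ the infimum of $R_\omega$ is attained at an interior point $x_0$. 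At $x_0$ we have $\Delta R_\omega\ge0$ and $XR_\omega=0$. Hence
\begin{equation*}
0\le -\frac{R_\omega^2}{n}-R_\omega=-\frac{R_\omega}{n}\,(R_\omega+n),
\end{equation*}
which forces $R_\omega(x_0)\ge -n$. This shows $u\ge0$; the same argument is the standard proof of $R\ge -n$ for expanders.

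For strictness, note that $u\ge0$ satisfies the linear inequality
\begin{equation*}
\left(\Delta+\tfrac X2\right)u\le u\left(1-\tfrac{u}{n}\right)+\ldots ,
\end{equation*}
which can be rewritten as $(\Delta+\tfrac X2)u - c(x)\,u\le0$ with $c$ bounded. The strong maximum principle then says that if $u$ vanishes at a point, $u\equiv0$. This contradicts $u\to n$ at infinity. So $u>0$ everywhere, and with Step 1 we obtain the uniform bound $\varepsilon$.

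\emph{Consequences.} The displayed consequences are immediate. First, $\Delta_\omega f=R_\omega+n$ by Lemma \ref{soliton indentities}. Second, under the normalisation $f=|\partial f|^2+R_\omega+n$, we get $f\ge R_\omega+n\ge\varepsilon$. The delicate point is the strong maximum principle step, where I must check the sign of the zeroth-order coefficient. Writing $|\Ric|^2\ge R_\omega^2/n$ and $R_\omega^2/n+R_\omega=R_\omega u/n$ gives
\begin{equation*}
\left(\Delta+\tfrac X2\right)u\le -\frac{R_\omega}{n}\,u,
\end{equation*}
which is linear in $u$ with bounded coefficient $-R_\omega/n$. Hopf's strong minimum principle for nonnegative supersolutions, $Lu\le0$ with $L=\Delta+\tfrac X2+R_\omega/n$, then applies regardless of the sign of the coefficient, because the minimum value is $0$.
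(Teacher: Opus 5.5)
Your proof is correct. The paper gives no argument of its own here and simply refers to \cite[Lemma 2.3]{longteng1}. You instead write out a self-contained version of the standard argument:
\begin{itemize}
\item the identity $(\Delta_\omega+\frac X2)R_\omega=-|\Ric(\omega)|^2-R_\omega$, which holds exactly, not just up to normalisation, with the paper's Kähler conventions and the Hermitian norm;
\item a minimum principle giving $R_\omega\ge -n$;
\item Hopf's strong maximum principle for strictness;
\item quadratic curvature decay for the uniform constant.
\end{itemize}

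One thing in the write-up should be corrected. You claim that the identity for $u$ ``gives only an upper bound'' and that $(\Delta_\omega+\frac X2)u\le u-\frac{u^2}{n}$ is ``the wrong direction for a minimum principle''. Both claims are false. An upper bound on the drift Laplacian is exactly the supersolution inequality a minimum principle needs. Moreover, $u-\frac{u^2}{n}=-\frac{R_\omega}{n}u$, so this is literally the same inequality you later derive for $R_\omega$ and feed into Hopf. The detour through $R_\omega$ and the garbled displays ending in ``$\dots$'' can therefore be deleted. The argument then reads:
\begin{itemize}
\item if $\inf u\le 0$, it is attained, because $u\to n$ at infinity;
\item at a minimum point, $0\le u(1-\frac un)$ forces $u\ge 0$;
\item then $(\Delta_\omega+\frac X2+\frac{R_\omega}{n})u\le 0$ with $u\ge0$, and Hopf rules out an interior zero.
\end{itemize}

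Finally, you implicitly use that $M$ is connected and noncompact, and this is essential. A compact gradient expander is Einstein with $R_\omega\equiv -n$, so the statement would fail there. Your Step 1 is exactly where the asymptotically conical hypothesis enters.
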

\begin{proof}
    See \cite[Lemma 2.3]{longteng1}.
\end{proof}
 \begin{lemma}\label{proper function}
    The normalized potential $f$ is a proper function, and for a fixed point $p\in M$ such that $X(p)=0$, there exist constants $C,c_1,c_2>0$ such that $\forall x\in M,$
    \begin{equation}\label{eq: d and f}
       \begin{split}
            \frac{(d_g(p,x)-c_1)^2}{2}-C\le &f(x)\le  \frac{(d_g(p,x)+c_2)^2}{2},\\
            \frac{r(x)^2}{2}\le &f(x)\le \frac{r(x)^2}{2}+C.
       \end{split}
    \end{equation}
\end{lemma}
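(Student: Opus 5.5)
The plan is to compare $f$ with the two natural radial quantities, $r^2/2$ on the cone end and $d_g(p,\cdot)^2/2$, using the soliton identity $f=|\partial f|_g^2+R_\omega+n$ together with the curvature decay of Definition \ref{ACKR expander}. Properness will follow immediately from either estimate in \eqref{eq: d and f}.

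\textbf{Second line of \eqref{eq: d and f}.} On $M\setminus E\cong\mathcal{C}\setminus\{o\}$ we have $d\pi(X)=r\partial_r$ by Theorem \ref{theorem of conlonderuellesun}. Since $X=\nabla^g f$, this gives
\[
X(f)=|X|_g^2=g(r\partial_r,r\partial_r)=r^2+O(1),
\]
where the error comes from $|\pi_*g-g_\mathcal{C}|_{g_\mathcal{C}}\le Cr^{-2}$. I would rather use an exact identity. The $(1,1)$-form $\omega+\Ric(\omega)$ equals $i\partial\bar\partial f$, and on the cone $\omega_\mathcal{C}=\frac{i}{2}\partial\bar\partial r^2$. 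Hence it is natural to compare $f$ with $r^2/2$ along the flow lines of $r\partial_r$. The identity $r\partial_r(r^2/2)=r^2$ and the identity $X(f)=|\partial f|^2_g$ (up to the factor convention) give
\[
X\!\left(f-\tfrac{r^2}{2}\right)=|X|_g^2-r^2 .
\]
The right-hand side is $O(1)$, and in fact nonnegative up to $O(r^{-2})\cdot r^2=O(1)$ terms.

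Integrating along the flow of $X$, which behaves like $r\mapsto e^{s}r$, I would then show that $f-r^2/2$ has a limit at infinity along each ray, with $|f-r^2/2|\le C$ on $\{r\ge1\}$. On the compact set $\{r\le 1\}$, which is compact because $E$ is compact, both functions are bounded, so the two-sided bound with some constant $C$ follows. For the sharp lower bound $f\ge r^2/2$, I would use the normalization: $f-r^2/2$ tends to $0$ at infinity, since $R_\omega\to 0$ and $|\partial f|^2-r^2/2\to0$ along the end. I would combine this with a maximum principle argument for the function $u=f-r^2/2$. Indeed, $(\Delta_\omega+\frac X2)f=f$ and, on the cone region, $(\Delta_\omega+\frac X2)(r^2/2)\le r^2/2+\text{small}$. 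Extending $r$ by $0$ on $E$, where $f\ge\varepsilon>0$ by Proposition \ref{lower bound of f}, I would handle the possible non-smoothness of $r$ near $E$ by working with $\max(f-r^2/2,\cdot)$ or barrier arguments. If this is too delicate, I would settle for $f\ge r^2/2-C$ and absorb the constant into the first line of \eqref{eq: d and f}.

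\textbf{First line of \eqref{eq: d and f}.} Since $|\nabla f|_g^2=|\partial f|^2\cdot$(const)$\le f$, the function $\sqrt{f}$ is Lipschitz with a universal constant, giving $\sqrt{2f(x)}\le\sqrt{2f(p)}+d_g(p,x)$. This yields the upper bound with $c_2=\sqrt{2f(p)}$. For the lower bound, I would use the asymptotically conical structure: $d_g(p,x)=r(x)+O(1)$, because $\pi_*g$ is $C^0$-close to $g_\mathcal{C}$ with $r^{-2}$ decay, so radial lengths agree up to a bounded error. Combining this with $f\ge r^2/2-C$ gives the lower bound.

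\textbf{Main obstacle.} I expect the main obstacle to be the sharp constant-free lower bound $f\ge r^2/2$. It hinges on the exact normalization of $f$ and on the precise relation between $|X|_g^2$ and $r^2$. My first attempt would be a flow-line argument: show that $X(f-r^2/2)\ge0$ fails only by an integrable amount, and that $f-r^2/2\to0$ at infinity. Failing that, I would cite \cite{longteng1}.
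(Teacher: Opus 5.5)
Your argument for the first line is fine. Since $|\nabla f|_g^2=2|\partial f|_g^2=2(f-R_\omega-n)\le 2f$, the function $\sqrt{2f}$ is $1$-Lipschitz, which gives the upper bound with $c_2=\sqrt{2f(p)}$. The lower bound then follows from $d_g(p,\cdot)\le r+C$ once the second line is known.

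The problems are in the second line. First, integrating $X(f-\frac{r^2}{2})=|X|_g^2-r^2=O(1)$ along the flow of $X$, i.e.\ in the variable $s=\log r$, only yields $|f-\frac{r^2}{2}|\le C+C\log r$. A bounded integrand is not integrable in $s$, so neither a limit along rays nor a uniform constant follows. No integration is needed here. The normalization you quote, $f=|\partial f|_g^2+R_\omega+n=\frac12|X|_g^2+R_\omega+n$, directly gives $f-\frac{r^2}{2}=\frac12(|X|_g^2-r^2)+R_\omega+n=O(1)$ on the end.

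Second, and this is the real gap, the constant-free bound $f\ge\frac{r^2}{2}$ is not proved. Your plan relies on $f-\frac{r^2}{2}\to0$ at infinity. That is false for this normalization: on flat $\C^n$ one has $f=\frac{|z|^2}{2}+n$, and in general the limit is $n$. It also relies on $(\Delta_\omega+\frac X2)\frac{r^2}{2}$ being $\frac{r^2}{2}$ plus a small error, which drops the term $\Delta_\omega\frac{r^2}{2}=\tr_\omega\omega_{\mathcal C}\approx n$. The maximum principle near $E$ is left open, and the fallback $f\ge\frac{r^2}{2}-C$ proves a weaker statement than the lemma.

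The missing idea is the monotonicity of $t\mapsto t\Phi_t^*f$, which the paper also uses later. We have $\frac{d}{dt}\Phi_t(x)=-\frac{1}{2t}X(\Phi_t(x))$, $\Phi_1=\mathrm{id}$ and $r\circ\Phi_t=t^{-1/2}r$. Hence
$\frac{d}{dt}\big(tf(\Phi_t(x))\big)=\big(f-\frac12|X|_g^2\big)(\Phi_t(x))=(R_\omega+n)(\Phi_t(x))$.
By the $O(1)$ bound above, $tf(\Phi_t(x))=\frac{r(x)^2}{2}+O(t)\to\frac{r(x)^2}{2}$ as $t\to0^+$. Therefore
\[
f(x)-\frac{r(x)^2}{2}=\int_0^1(R_\omega+n)(\Phi_t(x))\,dt\in\Big[\varepsilon,\ \sup_M(R_\omega+n)\Big],\qquad x\in M\setminus E,
\]
using Proposition \ref{lower bound of f} and boundedness of the curvature. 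On the compact set $E$ one has $r=0$ and $\varepsilon\le f\le\sup_E f$. This gives both inequalities of the second line at once, and properness as well. Your argument for the first line then goes through. For comparison, the paper does not prove this lemma but cites \cite[Proposition 2.20]{ConlonDeruelleSun} and \cite[Corollary 2.14]{ChenHallgrenLucas}.
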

\begin{proof}
    See \cite[Proposition 2.20]{ConlonDeruelleSun} and \cite[Corollary 2.14]{ChenHallgrenLucas}. Moreover, from \eqref{eq: d and f}, together with Proposition \ref{lower bound of f}, we deduce the existence of uniform constant $A>0$ such that
    \begin{equation*}
        d_g(p,\cdot)\le A f,
    \end{equation*}
    holds on $M$.
\end{proof}
\begin{corollary}\label{proper function with t}
    Let $g(t)$ be the self-similar solution associated to $(M,g,X)$, $\Phi_t$ be the flow of $-\frac{X}{2t}$ for all $t>0$, and let $p\in M$ such that $X(p)=0$; then $\forall x\in M,$
    \begin{equation*}
        \begin{split}
            \frac{(d_{g(t)}(p,x)-c_1\sqrt{t})^2}{2}-Ct\le &tf(\Phi_t(x))\le\frac{(d_{g(t)}(p,x)+c_2\sqrt{t})^2}{2},\\
           \frac{r(x)^2}{2} \le &tf(\Phi_t(x))\le\frac{r(x)^2}{2}+Ct.
        \end{split}
    \end{equation*}
\end{corollary}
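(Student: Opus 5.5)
The plan is to pull back the estimates of Lemma \ref{proper function} along the diffeomorphism $\Phi_t$, using the fact that $g(t)=t\Phi_t^*g$. The first point to settle is how distances transform. Since $g(t)=t\Phi_t^*g$, the map $\Phi_t:(M,g(t))\to(M,tg)$ is an isometry. Hence
\begin{equation*}
d_{g(t)}(p,x)=\sqrt{t}\,d_g(\Phi_t(p),\Phi_t(x)).
\end{equation*}
Because $X(p)=0$, the point $p$ is fixed by the flow of $-\frac{X}{2t}$, so $\Phi_t(p)=p$. This gives
\begin{equation*}
d_{g(t)}(p,x)=\sqrt{t}\,d_g(p,\Phi_t(x)).
\end{equation*}

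For the first chain of inequalities, I apply the first line of \eqref{eq: d and f} at the point $\Phi_t(x)$ and multiply through by $t$. The upper bound becomes
\begin{equation*}
tf(\Phi_t(x))\le \tfrac{1}{2}\bigl(\sqrt{t}\,d_g(p,\Phi_t(x))+c_2\sqrt t\bigr)^2=\tfrac12\bigl(d_{g(t)}(p,x)+c_2\sqrt t\bigr)^2.
\end{equation*}
The lower bound is handled the same way: multiplying $\frac{(d_g(p,\Phi_t(x))-c_1)^2}{2}-C\le f(\Phi_t(x))$ by $t$ gives $\frac{(d_{g(t)}(p,x)-c_1\sqrt t)^2}{2}-Ct\le tf(\Phi_t(x))$.

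For the second chain, I need to know how $r$ changes along $\Phi_t$. By Theorem \ref{theorem of conlonderuellesun}, $d\pi(X)=r\partial_r$ on $M\setminus E$. So $\Phi_t$ corresponds on the cone to the flow of $-\frac{1}{2t}r\partial_r$. That flow was defined with the time parameter running up to the value $t$ itself, i.e. $\Phi_t$ is the time-$t$ map of $-\frac{X}{2t}$ (equivalently, the time-$\frac{1}{2}\log$-type reparametrisation used to make $t\Phi_t^*g$ solve the flow). The radial function therefore scales as
\begin{equation*}
r(\Phi_t(x))=t^{-1/2}r(x),
\end{equation*}
which is exactly the scaling consistent with $t\Phi_t^*g_{\mathcal C}=g_{\mathcal C}$ on the cone. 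I would verify this scaling by differentiating in $t$ and using that $\Phi_t$ satisfies $\partial_t\Phi_t=-\frac{1}{2t}X\circ\Phi_t$. On $E$ both $r$ and $r\circ\Phi_t$ vanish, because $\Phi_t$ preserves $E$ and $X$ is tangent to $E$, where $\pi$ contracts $E$ to the fixed apex. Multiplying the second line of \eqref{eq: d and f} at $\Phi_t(x)$ by $t$ then gives $\frac{r(x)^2}{2}\le tf(\Phi_t(x))\le\frac{r(x)^2}{2}+Ct$.

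The only delicate step is the identity $r\circ\Phi_t=t^{-1/2}r$, which depends on the normalisation of $\Phi_t$. I would fix that normalisation by using that $g(t)=t\Phi_t^*g$ is asymptotic to the cone metric, which is static under the flow. Everything else is direct substitution.
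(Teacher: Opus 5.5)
Your proposal is correct and is the same argument as the paper's. The paper simply observes $\Phi_t(p)=p$ and pulls back \eqref{eq: d and f} along $\Phi_t$; you make the two scalings $d_{g(t)}(p,x)=\sqrt{t}\,d_g(p,\Phi_t(x))$ and $r\circ\Phi_t=t^{-1/2}r$ explicit. For the radial scaling, the clean justification is the ODE you give at the end: $\rho(t):=r(\Phi_t(x))$ satisfies $\rho'=-\rho/(2t)$ with $\Phi_1=\mathrm{id}$ (so that $g(1)=g$). The phrase ``time-$t$ map of $-\frac{X}{2t}$'' should not be used as the justification, since read literally it gives a $t$-independent map.
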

\begin{proof}
    Since $\Phi_t(p)=p$ for all $t>0$, then the proof follows immediately from [Lemma \ref{proper function},\eqref{eq: d and f}]. Moreover, since we have $d_{g}(p,\cdot)\le Af$ and $f\ge\varepsilon>0$ on $M$, we conclude that there exists a uniform constant $A>1$ such that
    \begin{equation}\label{eq: tft is aprabolic distance}
     A^{-1}(d_{g(t)}(p,x)+\sqrt{t})^2 \le  tf(\Phi_t(x))\le A(d_{g(t)}(p,x)+\sqrt{t})^2,
    \end{equation}
    holds for all $x\in M,t>0$. In other words, function $\sqrt{t\Phi_t^*f}$ is comparable to the \emph{parabolic distance} $d_{g(t)}(p,\cdot)+\sqrt{t}$.
\end{proof}
\begin{proof}[Proof of Lemma \ref{isometry lemma}]
    Let $g_i=\varrho(g_{\mathcal{C},i})\in \varrho(\mathcal{B}(g_\mathcal{C},\delta))$ with $g_{\mathcal{C},i}\in \mathcal{B}(g_\mathcal{C},\delta)$ for $i=1,2$, it suffices to show that $\operatorname{BL}_{\mathcal{C}}(g_{\mathcal{C},1},g_{\mathcal{C},2})= \operatorname{BL}_\infty(g_1,g_2)$. Since $g_{\mathcal{C},i}$ is conical metric, then one can apply Perelman's pseudolocality \cite[Appendix A]{ChenHallgrenLucas} to show there exists a $\lambda>0$ such that on $\{r^2\ge\lambda\}$
    \begin{equation*}
        |\pi_*g_i-g_{\mathcal{C},i}|_{g_{\mathcal{C},i}}\le C r^{-2},\quad i=1,2.
    \end{equation*}
   Recall that $d\pi(X)=r\partial_r$ and $(M,g_i,X)$ is a K\"ahler-Ricci expander, it follows that on $\{r^2\ge\lambda t\}$, for the self-similar solutions $g_i(t):=t\Phi_t^*g_i$, we have
    \begin{equation*}
          |\pi_*g_i(t)-g_{\mathcal{C},i}|_{g_{\mathcal{C},i}}\le C r^{-2}t,\quad i=1,2.
    \end{equation*}
    For any $\varepsilon>0$, we can find a $\lambda_\varepsilon>0$ such that for all $x\in\mathcal{C}$ with $r(x)^2\ge \lambda_\varepsilon$, it holds that
    \begin{equation*}
        \operatorname{BL}_x(\pi_*g_1,\pi_*g_2)\le\operatorname{BL}_\infty(g_1,g_2)+\varepsilon.
    \end{equation*}
    Therefore, for any $t>0$ and $x\in\mathcal{C}$ such that $r(x)^2\ge \lambda_\varepsilon t$, we have
    \begin{equation*}
        \operatorname{BL}_x(\pi_*g_1(t),\pi_*g_2(t))\le\operatorname{BL}_\infty(g_1,g_2)+\varepsilon.
    \end{equation*}
  Let $t,\varepsilon$ tend to 0, on $\mathcal{C}\setminus\{o\}$, one has
    \begin{equation*}
         \operatorname{BL}_{\mathcal{C}}(g_{\mathcal{C},1},g_{\mathcal{C},2})\le \operatorname{BL}_\infty(g_1,g_2)
    \end{equation*}
    as expected.

    For another side, we observe that
    \begin{equation*}
        \operatorname{BL}_\infty(g_i,\pi^*g_{\mathcal{C},i})=0,\quad \textnormal{for $i=1,2$}.
    \end{equation*}
    It follows that
    \begin{equation*}
         \operatorname{BL}_\infty(g_1,g_2)\le \operatorname{BL}_{\infty}(\pi^*g_{\mathcal{C},1},\pi^*g_{\mathcal{C},2})+\sum_{i=1}^2 \operatorname{BL}_\infty(g_i,\pi^*g_{\mathcal{C},i})\le \operatorname{BL}_{\infty}(\pi^*g_{\mathcal{C},1},\pi^*g_{\mathcal{C},2}).
    \end{equation*}
    By definition, $\operatorname{BL}_{\infty}(\pi^*g_{\mathcal{C},1},\pi^*g_{\mathcal{C},2})\le\operatorname{BL}_{\mathcal{C}}(g_{\mathcal{C},1},g_{\mathcal{C}_2})$, therefore,
    \begin{equation*}
         \operatorname{BL}_\infty(g_1,g_2)\le \operatorname{BL}_{\mathcal{C}}(g_{\mathcal{C},1},g_{\mathcal{C}_2}),
    \end{equation*}
    holds as expected.
\end{proof}
\section{Estimation along K\"ahler-Ricci flow}\label{estimation of KRF}

In this section, we prove Theorem \ref{main estimate thm}. We consider the initial K\"ahler metric $g_0$ on an asymptotically conical gradient K\"ahler-Ricci expander $(M,g,X)$ that satisfies the following conditions:
\begin{definition}[$a-$closeness]\label{a closeness}
We say that a K\"ahler metric $g_0$ is $a-$close to $\omega$ if it satisfies the following conditions:
    \begin{enumerate}
    \item Its Kähler form can be written as
\begin{equation*}
    \omega_0 = \omega + i\partial\bar{\partial}\psi_0,
\end{equation*}
for some smooth real-valued function $\psi_0$.
The bi-Lipschitz norm at infinity (see Definition \ref{bi-lipschitz norm at infinity}) is strictly bounded from above by the constant $a>0$:
    \begin{equation*}
        \operatorname{BL}_\infty(g,g_{0})<a.
    \end{equation*}
        \item Let $d_g(p,\cdot)$ denote the distance function for fixed point $p\in M$. For each $k\in\N_0$, there exists a constant $C_k>0$ such that 
        \begin{equation*}
           \sup_M d_g(p,\cdot)^k|(\nabla^g)^kg_0|_g\le C_k.
        \end{equation*}
        \item The metric $g_0$ is $JX-$invariant.
    \end{enumerate}
\end{definition}
\begin{definition}[bi-Lipschitz norm at infinity]\label{bi-lipschitz norm at infinity}
    For each $x\in M$, we define the bi-Lipschitz norm at $x$ as follows:
    \begin{equation*}
        \operatorname{BL}_x(g,g_0):=\sup\{\sigma\ge0\ |\ e^{-\sigma}g_0(x)\le g(x)\le e^\sigma g_0(x)\}.
    \end{equation*}
    Then we define bi-Lipschitz norm at infinity in the following way:
    \begin{equation*}
        \operatorname{BL}_\infty(g,g_{0}):=\limsup_{d_g(p,x)\to\infty} \operatorname{BL}_x(g,g_0).
    \end{equation*}
\end{definition}
\subsection{Quantification of initial metric}
Combining the metric equivalence with the 
$JX$-invariance, one can derive $C^0$ and $C^1$ estimates for the Kähler potential.
\begin{prop}[Quantification of initial data]\label{quantification of initial data}
    There exists a real-valued $JX$-invariant function $\psi_0\in C^\infty(M)$ such that the following holds:
    \begin{enumerate}
        \item The function $\psi_0$ is the K\"ahler potential of $\omega_0$, i.e, $\omega_0=\omega+i\partial\bar\partial\psi_0$.
        \item Let $f$ be the normalized soliton potential, there exists a constant $C>0$ such that on $M$,
        \begin{equation*}
            \begin{split}
               & (e^{-a}-1)f-C\le\psi_0\le (e^a-1)f+C;\\
               & (e^{-a}-1)f-C\le\frac{X}{2}\cdot\psi_0\le (e^a-1)f+C.
            \end{split}
        \end{equation*}
    \end{enumerate}
\end{prop}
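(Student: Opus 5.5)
Our plan is to first fix a $JX$-invariant potential $\psi_0$, and then bound $\psi_0$ and $\frac{X}{2}\cdot\psi_0$ by integrating along the radial flow lines of $X$, using the metric comparison at infinity.

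\textbf{Step 1: a $JX$-invariant potential.} Definition \ref{a closeness} already gives some potential $\psi_0$ with $\omega_0=\omega+i\partial\bar\partial\psi_0$. Both $\omega$ and $\omega_0$ are invariant under $JX$; $\omega$ is invariant because $JX$ is Killing (it is the Reeb-type field, $X$ being real holomorphic and gradient). The flow of $JX$ generates a real torus $T$ acting holomorphically on $M$: it is the closure, in the isometry group of $g$, of the flow of $JX$, which corresponds to $\xi$ on the cone. Replacing $\psi_0$ by its average over $T$ gives a $T$-invariant function. Since $T$ preserves $\omega$, $\omega_0$ and the complex structure, this averaged function is still a potential for $\omega_0$. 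We keep the notation $\psi_0$ for it. It is then $JX$-invariant, so $JX\cdot\psi_0=0$.

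\textbf{Step 2: bound on $\frac{X}{2}\cdot\psi_0$.} Trace the identity $\omega_0-\omega=i\partial\bar\partial\psi_0$ against the holomorphic field $X^{1,0}=\frac12(X-iJX)$. Because $\psi_0$ is $JX$-invariant and $X$ is holomorphic, one has $i\partial\bar\partial\psi_0(X,JX)$ equal, up to a universal constant, to $X\cdot(X\cdot\psi_0)+(JX)\cdot(JX\cdot\psi_0)=X\cdot(X\cdot\psi_0)$. The same computation applied to $f$ gives $\omega(X,JX)=|X|_g^2=|\nabla f|^2$, which is comparable to $X\cdot(X\cdot f)$ up to lower order terms. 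Writing $u=\frac{X}{2}\cdot\psi_0$ and $v=\frac X2\cdot f=|\partial f|^2$, we get
\begin{equation*}
\tfrac{X}{2}\cdot u=\tfrac14\bigl(\omega_0(X,JX)-\omega(X,JX)\bigr)=\tfrac14\bigl(g_0(X,X)-g(X,X)\bigr).
\end{equation*}
Outside a compact set $K$, $\operatorname{BL}_x(g,g_0)<a$ holds, and this gives
\begin{equation*}
(e^{-a}-1)\tfrac X2\cdot v\le \tfrac X2\cdot u\le (e^{a}-1)\tfrac X2\cdot v .
\end{equation*}
Here we use that $\frac X2\cdot v=\frac X2\cdot|\partial f|^2=\frac{1}{4}g(X,X)+O(\cdot)$ by the soliton identities and $\operatorname{Hess}f=\Ric+g$ with $\Ric=O(r^{-2})$. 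Now integrate along the flow of $X/2$, starting from $\partial K$, where everything is bounded by compactness. Since $v=f-R_\omega-n$ with $R_\omega$ bounded, this yields $(e^{-a}-1)f-C\le u\le(e^a-1)f+C$. Inside $K$ the estimate holds trivially after enlarging $C$.

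\textbf{Step 3: bound on $\psi_0$.} Along an integral curve $\gamma$ of $X/2$, Lemma \ref{proper function} shows $f$ grows like $r^2/2$, and $X\cdot r^2=2r^2$ on $\mathcal C$. Hence $\frac{d}{ds}\psi_0(\gamma(s))=u\le (e^a-1)f+C$ and $\frac{d}{ds}f(\gamma(s))=v\approx f$. Integrating $\frac{d}{ds}(\psi_0-(e^a-1)f)\le C'$ crudely only gives linear growth in $s$, which is $\log f$, and that is not quite a constant. To remove it, compare instead with the ODE $\frac{d}{ds}\phi=\phi$ satisfied by $f$ up to the bounded error $R_\omega+n$. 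Setting $w=\psi_0-(e^a-1)f$, one gets $\frac{d}{ds}w\le C$ only, so we use a sharper version of Step 2 in which the error is $O(1)$ relative to $v$ rather than to $f$. Then $\frac{d}{ds}(\psi_0-(e^a-1)f)\le$ an integrable error, because $\Ric=O(r^{-2})$ decays along $\gamma$ at the rate $e^{-s}$. Integrating from $\partial K$ then bounds $w$ by a constant, and the lower bound is symmetric.

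\textbf{Main obstacle.} The delicate point is exactly this bookkeeping of error terms. The ratio $\operatorname{BL}<a$ holds only at infinity, and the soliton error terms must be integrable along the flow, decaying like $r^{-2}\sim e^{-s}$, so that only additive constants appear. The upper bounds are stated with $(e^a-1)f$ rather than $(e^a-1)f+o(f)$, which leaves no slack; we therefore expect to possibly need to replace $a$ by a slightly larger $a'<a$ (using strict inequality) to absorb the errors.
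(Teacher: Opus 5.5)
Your Steps 1 and 2 follow the paper's route. The paper simply quotes an earlier result for the existence of a $JX$-invariant potential. Your torus averaging is a fine self-contained substitute: the closure of the flow of $JX$ is compact because $JX$ vanishes at a zero $p$ of $X$, so the flow lies in the compact isotropy group of $p$.

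Step 2 is simpler than you make it. Since $X=\nabla^g f$, one has exactly $g(X,X)=X\cdot f$. Hence on $\{f\ge\lambda_0\}$ you get $X\cdot(\tfrac X2\cdot\psi_0)=g_0(X,X)-g(X,X)\le(e^{a-\varepsilon}-1)\,X\cdot f$. So $\tfrac X2\cdot\psi_0-(e^{a-\varepsilon}-1)f$ is non-increasing along the flow lines of $X$, with no error term. Comparing instead with $X\cdot(X\cdot f)=2\operatorname{Hess}f(X,X)$ creates a term $\Ric(X,X)$. The bound $\Ric=O(r^{-2})$ alone only makes that term $O(1)$, which is not integrable in $s$. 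It is in fact $O(r^{-2})$, but only because $\Ric(X)=-\nabla^g R_\omega$.

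The genuine gap is Step 3. After integrating Step 2 you have $u-(e^a-1)v\le C_1$, where $C_1$ is essentially $\max_{\partial K}\bigl(u-(e^a-1)v\bigr)$. This is a constant fixed by the data on $\partial K$, not a curvature error. It does not decay along $\gamma$, so a second integration leaves a term $C_1 s\approx C_1\log f$, and no sharper bookkeeping removes it.

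Here is a concrete obstruction on the flat Gaussian expander, where $\Ric\equiv0$, $X=r\partial_r$ and $f=\frac{r^2}{2}+n$. Take $\psi_0=(e^a-1)\frac{r^2}{2}+\sqrt{\log r^2}$. Then $X\cdot(\tfrac X2\cdot\psi_0)=(e^a-1)|X|^2-\tfrac12(\log r^2)^{-3/2}<(e^a-1)|X|^2$ for large $r$. This is exactly the strict radial inequality your argument extracts from $\operatorname{BL}_x(g,g_0)<a$, yet $\psi_0-(e^a-1)f\to+\infty$. Such a $\psi_0$ is excluded only by the tangential part of the bi-Lipschitz bound, which you never use.

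What is missing is the \emph{uniform} slack in $\operatorname{BL}_\infty(g,g_0)<a$. The fix, which is exactly the paper's proof, goes as follows:
\begin{enumerate}
\item Pick $\varepsilon>0$ and $\lambda_0$ with $e^{-a+\varepsilon}g_0\le g\le e^{a-\varepsilon}g_0$ on $\{f\ge\lambda_0\}$.
\item Run Step 2 with $a-\varepsilon$ to get $r\partial_r\psi_0\le(e^{a-\varepsilon}-1)r^2+C$.
\item Integrate in $\log r$ to get $\psi_0\le(e^{a-\varepsilon}-1)\frac{r^2}{2}+C\log r+C$.
\item Absorb $C\log r\le(e^a-e^{a-\varepsilon})\frac{r^2}{2}$ for $r$ large, and argue symmetrically for the lower bound.
\end{enumerate}
Your closing remark gestures at this, though you need $a'$ slightly \emph{smaller} than $a$, not larger. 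It is the essential step, not an optional fix, and the integrability mechanism your Step 3 actually relies on fails.
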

\begin{proof}
Let $\psi_0$ be the K\"ahler potential of $\omega_0$ in Definition \ref{a closeness}.
    We can always assume that $JX\cdot\psi_0=0$ in Definition \ref{a closeness} thanks to \cite[Proposition 3.7]{longteng2}.

    By the condition of bi-Lipschitz norm at infinity, there exist $\lambda_0,\varepsilon>0$ such that on $\{f\ge\lambda_0\}$, we have
    \begin{equation*}
       e^{-a+\varepsilon} g_0\le g\le e^{a-\varepsilon}g_0.
    \end{equation*}
    Here we can replace $d^2_g(p,\cdot)$ by $f(\cdot)$ thanks to Lemma \ref{proper function}.
    Applying this inequality to $X$ gives us
    \begin{equation*}
      e^{-a+\varepsilon}g(X,X) \le g_0(X,X)\le e^{a-\varepsilon}g(X,X).
    \end{equation*}
    We conclude that $\partial\bar\partial\psi_0(X,X)\le (e^{a-\varepsilon}-1)g(X,X)$. Since $\psi_0$ is $JX-$invariant, then we have 
    \begin{equation*}
        \partial\bar\partial\psi_0(X,X)=X\cdot(\frac{X}{2}\cdot\psi_0)\le (e^{a-\varepsilon}-1)g(X,X)=(e^{a-\varepsilon}-1)X\cdot f.
    \end{equation*}
    Therefore, we have on $\{f\ge\lambda_0\}$
    \begin{equation*}
        \frac{X}{2}\cdot\psi_0-(e^{a-\varepsilon}-1)f\le \sup_{f(x)\le \lambda_0}|\frac{X}{2}\cdot\psi_0-(e^{a-\varepsilon}-1)f|(x).
    \end{equation*}
    Define $C_1=\sup_{f(x)\le \lambda_0}|\frac{X}{2}\cdot\psi_0-(e^{a-\varepsilon}-1)f|(x)$, then it follows that on $M$, we have
    \begin{equation*}
        \frac{X}{2}\cdot\psi_0-(e^{a-\varepsilon}-1)f\le C_1.
    \end{equation*}
    Similarly, we can prove that on $M$, we also have
    \begin{equation*}
         \frac{X}{2}\cdot\psi_0\ge (e^{-a+\varepsilon}-1)f- C_1.
    \end{equation*}
    Recall that on the K\"ahler cone $(\mathcal{C},g_\mathcal{C})$, by Lemma \ref{proper function}, we have $X=r\partial_r$ and
    \begin{equation*}
       \frac{r^2}{2}\le f\le \frac{r^2}{2}+C,
    \end{equation*}
    holds for some uniform constant $C>0$. For $\lambda>1$, on $\{r^2\ge \lambda\}$, we have
    \begin{equation*}
        r\partial_r\psi_0\le 2(e^{a-\varepsilon}-1)f+C_1\le (e^{a-\varepsilon}-1)r^2+2C+C_1.
    \end{equation*}
    By integration, we have
    \begin{equation*}
      \begin{split}
           \psi_0-\psi_0|_{r^2=\lambda}&\le (e^{a-\varepsilon}-1)\left(\frac{r^2}{2}-\frac{\lambda}{2}\right)+(2C+C_1)(\log r-\frac{1}{2}\log\lambda)\\
            &\le (e^{a-\varepsilon}-1)\frac{r^2}{2}+(2C+C_1)\log r.
      \end{split}
    \end{equation*}
Take $\lambda>>1$ such that $(2C+C_1)\log r\le \sigma\frac{r^2}{2}$ on $\{r^2\ge\lambda\}$. Here $\sigma>0=\min\{e^a-e^{a-\varepsilon},e^{-a+\varepsilon}-e^{-a}\}$. Then on $\{r^2\ge\lambda\}$, we have
\begin{equation*}
   \psi_0-\psi_0|_{r^2=\lambda}\le (e^a-1)\frac{r^2}{2}\le (e^a-1)f.
\end{equation*}
Similarly, we can also prove thta on $\{r^2\ge\lambda\}$, we have
\begin{equation*}
    \psi_0-\psi_0|_{r^2=\lambda}\ge (e^{-a}-1)\frac{r^2}{2}\ge (e^{-a}-1)f.
\end{equation*}
    Let $C=\sup_{r^2\le \lambda}|\psi_0|+C_1$, then on $M$, we have 
     \begin{equation*}
            \begin{split}
               & (e^{-a}-1)f-C\le\psi_0\le (e^a-1)f+C;\\
               & (e^{-a}-1)f-C\le\frac{X}{2}\cdot\psi_0\le (e^a-1)f+C.
            \end{split}
        \end{equation*}
\end{proof}
\subsection{Rough estimates along K\"ahler-Ricci flow}
Let $(\tilde g(t))_{t\in [0,T)}$ be the unique Shi's K\"ahler-Ricci flow starting from $g_0$ with maximum existence time $T>0$ as in \cite[Proposition 3.8]{longteng2}. Recall that $\operatorname{BL}_\infty(g_0,g)<a$ and $\operatorname{BL}_\infty(g_{\mathcal{C}},g)=0$, when the initial data is close enough to the conical metric, Perelman's pseudolocality together with Shi's estimates will control the flow:
\begin{prop}\label{perelman+shi}
    There exists an $a>0$, such that if $g_0$ is $a-$close to $g$,  then there exist $\{C_k>0\}_{k\in\N_0}$, $\lambda>0$, such that on $\{(x,t)\ |\ r(x)^2\ge \lambda+\lambda t,\ t\in[0,T)\}$, we have 
    \begin{equation*}
        r(x)^{2+k}|(\nabla^{\tilde g(t)})^k\Rm(\tilde g(t))|_{\tilde g(t)}(x)\le C_k.
    \end{equation*}
\end{prop}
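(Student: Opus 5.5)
The plan is to combine Perelman's pseudolocality, applied at every point far out on the end, with Shi's local derivative estimates. The scale-invariant form of the curvature bound then comes from the conical geometry of $g$ near infinity.

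\textbf{Step 1: a scale-invariant initial condition.} Since $\operatorname{BL}_\infty(g_0,g)<a$ and $\operatorname{BL}_\infty(g,\pi^*g_{\mathcal C})=0$ by Theorem \ref{theorem of conlonderuellesun}, there is $\lambda_0$ such that $e^{-2a}g_{\mathcal C}\le \pi_*g_0\le e^{2a}g_{\mathcal C}$ on $\{r^2\ge\lambda_0\}$. Condition (2) of Definition \ref{a closeness} gives $r^k|(\nabla^{g})^kg_0|_g\le C_k$, and Lemma \ref{proper function} lets us replace $d_g(p,\cdot)$ by $r$. Hence $|\Rm(g_0)|_{g_0}\le C_0 r^{-2}$, but with $C_0$ not small.

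Fix $x$ with $r(x)=\rho$ large and rescale by $\rho^{-2}$. On the ball $B_{g_0}(x,c\rho)$, with $c$ depending only on $a$, the rescaled metric $\rho^{-2}g_0$ is $e^{2a}$-bi-Lipschitz to a piece of the cone $\rho^{-2}g_{\mathcal C}$ of unit size. That cone piece is flat-like: its link is fixed, but it is not Euclidean.

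\textbf{Step 2: pseudolocality in almost-Euclidean form.} Pseudolocality needs almost-Euclidean isoperimetry or volume ratio on a ball together with a curvature bound. I would use the version in \cite[Appendix A]{ChenHallgrenLucas}, which compares with a smooth model geometry, here the cone region $\{1/2\le r\le 2\}$. The smallness of $a$ makes $\rho^{-2}g_0$ close to this model, and the rescaled curvature of $g_0$ is bounded by $C_0$. The conclusion is $|\Rm(\tilde g(t))|\le C\rho^{-2}$ for $t\le \epsilon\rho^2$ on $B(x,\epsilon\rho)$.

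To reach all times $t$ with $r^2\ge\lambda+\lambda t$, i.e. $t\le \rho^2/\lambda$, choose $\lambda\ge 1/\epsilon$. Then $\rho^2/\lambda\le\epsilon\rho^2$, so the time range is covered.

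This step is the main obstacle. Closeness to a cone and not to $\R^{2n}$ means the standard pseudolocality theorem cannot be quoted directly. The fallback is the closeness-to-a-smooth-model version, with $a$ chosen small by that theorem's $\varepsilon$ applied to the compact annulus model, which is uniform in $\rho$ by scaling.

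\textbf{Step 3: higher derivatives.} Shi's local derivative estimates on parabolic cylinders $B(x,\epsilon\rho/2)\times[0,\epsilon\rho^2]$ give time-interior bounds $|\nabla^k\Rm|\le C_k\rho^{-2}t^{-k/2}$. These are not good enough near $t=0$. Instead I use the modified Shi estimate, which includes bounds on the initial derivatives: the initial data satisfy $|\nabla^k\Rm(g_0)|\le C_k\rho^{-2-k}$ from condition (2). This yields $|\nabla^k\Rm(\tilde g(t))|\le C_k\rho^{-2-k}$ uniformly for $t\in[0,\epsilon\rho^2]$.

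Finally, $r$ is comparable along $B(x,\epsilon\rho)$. Covering $\{r^2\ge \lambda+\lambda t\}$ by such cylinders then gives $r^{2+k}|(\nabla^{\tilde g(t)})^k\Rm(\tilde g(t))|_{\tilde g(t)}\le C_k$ on that region.
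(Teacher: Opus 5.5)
Your proposal follows essentially the same route as the paper, whose proof simply defers to \cite[Appendix A]{ChenHallgrenLucas}: pseudolocality centred at each point of the conical end at a scale comparable to $r$, with $\lambda$ chosen so the resulting time window covers $\{r^2\ge\lambda(1+t)\}$, followed by Shi's derivative estimates using the initial bounds $|(\nabla^{g_0})^k\Rm(g_0)|_{g_0}\le C_kr^{-2-k}$. Two details to tighten: the cone is not really an obstacle, since on $B(x,c\rho)$ with $c$ small (depending on the link and on $C_0$) the rescaled cone metric is almost Euclidean, so $e^{2a}$-bi-Lipschitz closeness with $a$ small already gives the almost-Euclidean volume ratio required by Perelman's standard theorem; and that theorem by itself only gives $|\Rm(\tilde g(t))|\le \alpha t^{-1}+(\varepsilon r_0)^{-2}$, so you must combine it with $|\Rm(g_0)|\le C_0\rho^{-2}$ through a local estimate of B.-L.~Chen type (an initial curvature bound plus an $A/t$ bound yields a uniform bound) to obtain the uniform $C\rho^{-2}$ bound that your modified Shi step needs.
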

\begin{proof}
   The proof is the same as in \cite[Appendix A]{ChenHallgrenLucas}.
\end{proof}
\begin{corollary}
    There exist $\{C'_k>0\}_{k\in\N_0}$, such that on $\{(x,t)\ |\ r(x)^2\ge \lambda+\lambda t,\ t\in[0,T)\}$, we have 
    \begin{equation*}
        r(x)^{k}|(\nabla^{g(t+1)})^k\left(\tilde g(t)-g(t+1)\right)|_{g(t+1)}(x)\le C'_k.
    \end{equation*}
\end{corollary}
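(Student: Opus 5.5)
The idea is to write $\tilde g(t)=g(t+1)+(\tilde g(t)-g(t+1))$ and control each piece on the region $\Omega:=\{(x,t)\ |\ r(x)^2\ge \lambda+\lambda t\}$ of Proposition \ref{perelman+shi}. The two pieces are handled as follows: uniform equivalence $C^{-1}g(t+1)\le\tilde g(t)\le Cg(t+1)$ on $\Omega$, together with scale-invariant derivative bounds for both $\tilde g(t)$ and $g(t+1)$ in terms of $r$. For the self-similar flow $g(t+1)=(t+1)\Phi_{t+1}^*g$, Theorem \ref{theorem of conlonderuellesun} and the dilation structure $d\pi(X)=r\partial_r$ give
\begin{equation*}
|(\nabla^{g_\mathcal{C}})^k(\pi_*g(t+1)-g_\mathcal{C})|_{g_\mathcal{C}}\le C_k (t+1) r^{-2-k}.
\end{equation*}
On $\Omega$ we have $(t+1)r^{-2}\le \lambda^{-1}$, so $g(t+1)$ is uniformly equivalent to $g_\mathcal{C}$ there. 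Its Christoffel symbols differ from those of $g_\mathcal{C}$ by $O(r^{-1})$, and more generally its $k$-th derivative differences are $O(r^{-k})$. So it is enough to prove the estimate with $g_\mathcal{C}$ in place of $g(t+1)$, i.e.\ to bound $r^k|(\nabla^{g_\mathcal{C}})^k(\tilde g(t)-g_\mathcal{C})|_{g_\mathcal{C}}$ on $\Omega$.

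\textbf{Step 1: $C^0$ equivalence on $\Omega$.} At $t=0$ the bi-Lipschitz condition $\operatorname{BL}_\infty(g,g_0)<a$ and $\operatorname{BL}_\infty(g,g_{\mathcal C})=0$ give $e^{-2a}g_\mathcal{C}\le g_0\le e^{2a}g_\mathcal{C}$ for $r^2\ge\lambda$, after enlarging $\lambda$. Along the flow we integrate $\partial_t\tilde g=-\Ric(\tilde g)$. Proposition \ref{perelman+shi} with $k=0$ gives $|\Ric(\tilde g(s))|_{\tilde g(s)}\le C r^{-2}$ at a fixed point $x$ for all $s\le t$: if $(x,t)\in\Omega$ then $(x,s)\in\Omega$ for every $s\le t$. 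Hence
\begin{equation*}
|\log(\tilde g(t)/g_0)|\le C t r(x)^{-2}\le C\lambda^{-1}.
\end{equation*}
This yields $\tilde g(t)$ uniformly equivalent to $g_\mathcal{C}$, hence to $g(t+1)$, on $\Omega$, which is the case $k=0$.

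\textbf{Step 2: higher derivatives.} I would first use the condition $\sup d_g^k|(\nabla^g)^kg_0|_g\le C_k$ in Definition \ref{a closeness}, together with $d_g\sim r$, to get $r^k|(\nabla^{g_\mathcal{C}})^k g_0|\le C_k$ at $t=0$. Next, write
\begin{equation*}
\partial_t\nabla^{g_\mathcal{C}}\tilde g=-\nabla^{g_\mathcal{C}}\Ric(\tilde g).
\end{equation*}
The difference $\nabla^{g_\mathcal{C}}-\nabla^{\tilde g}$ is a tensor $A=\tilde g^{-1}*\nabla^{g_\mathcal{C}}\tilde g$. So $\nabla^{g_\mathcal{C}}\Ric$ is expressed through $\nabla^{\tilde g}\Ric$, which is bounded by $r^{-3}$ from Proposition \ref{perelman+shi}, plus $A*\Ric$. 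This gives the linear ODE inequality
\begin{equation*}
\partial_t|A|\le C r^{-3}+Cr^{-2}|A|.
\end{equation*}
Integrating over $s\in[0,t]$ with $t\le r^2/\lambda$ yields $r|A|\le C$. Inductively, the same scheme gives $r^k|(\nabla^{g_\mathcal{C}})^k\tilde g|\le C_k$: the $k$-th derivative of $\Ric$ in the $g_\mathcal{C}$ connection is a polynomial in $\nabla^{\tilde g,j}\Rm$ ($j\le k$) and lower-order $\nabla^{g_\mathcal{C}}$-derivatives of $\tilde g$, all of which are already bounded with the correct powers of $r$. Converting back from $g_\mathcal{C}$ to $g(t+1)$ as explained in the first paragraph gives the claimed bound for $\tilde g(t)-g(t+1)$.

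\textbf{Main obstacle.} The delicate point is keeping the time integration under control: the factor $t$ must always be absorbed by $r^{-2}$. This works precisely because $\Omega$ is backward-invariant in time at fixed $x$ and because $t\le r^2/\lambda$ there. I expect the bookkeeping of the inductive derivative terms, and checking that the constants are independent of $T$, to be the only real work.
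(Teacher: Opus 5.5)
Your proposal is correct and follows essentially the argument the paper relies on: the paper simply defers to the proof of Claim 3.9 in \cite{ConlonDeruelleSun}, namely integrating $\partial_t\tilde g=-\Ric(\tilde g)$ and its covariant derivatives with respect to a fixed background metric along the backward-invariant region $\{r^2\ge\lambda(t+1)\}$, using the curvature bounds of Proposition \ref{perelman+shi} and $t\le r^2/\lambda$ to absorb the time factor, with a Gronwall step for the linear top-order term. One cosmetic point: you need not enlarge $\lambda$ in Step 1, since uniform equivalence of $g_0$, $g$ and $g_{\mathcal C}$ on $\{r^2\ge\lambda\}$ (from compactness of the annuli plus the asymptotics in Theorem \ref{theorem of conlonderuellesun}) already gives the $C^0$ bound on exactly the region in the statement.
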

\begin{proof}
    The proof follows by the same argument as in the proof of \cite[Claim 3.9]{ConlonDeruelleSun}.
\end{proof}
Throughout the remainder of this article, we assume that the initial metric $g_0$ is 
$a$-close to $g$ in the sense of Proposition \ref{perelman+shi}. To perform estimates on the region $\{(x,t)|r(x)^2\le\lambda(t+1)\}$, we need to reduce the K\"ahler-Ricci flow equation to a Monge-Ampère flow equation:
\begin{prop}\cite[Proposition 3.10]{longteng2}\label{MA flow}
    There exists an $JX-$invariant real-valued function $\varphi\in C^\infty(M\times [0,T))$ such that $\tilde\omega(t)=\omega(1+t)+i\partial\bar\partial \varphi(t)$ for all $t\in [0,T)$. Moreover, $\varphi(t)_{t\in [0,T)}$ is the solution to the following Monge-Ampère flow equation:
    \begin{equation*}
        \begin{split}
            \frac{\partial}{\partial t}\varphi(t)&=\log\frac{(\omega(1+t)+i\partial\bar\partial \varphi(t))^n}{\omega(1+t)^n};\\
            \varphi(0)&=\psi_0.
        \end{split}
    \end{equation*}
\end{prop}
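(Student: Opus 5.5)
We will produce the potential $\varphi$ in three steps: write $\tilde\omega(t)$ as a $\partial\bar\partial$-perturbation of $\omega(1+t)$, solve the resulting equation for a time-dependent function, and then correct by a normalization so that the Monge--Ampère flow is exactly satisfied.

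\textbf{Step 1 (the $\partial\bar\partial$-exact difference).} For the self-similar flow we have $\omega(1+t)=(1+t)\Phi_{1+t}^*\omega$. Differentiating in $t$ and using the soliton equation $i\partial\bar\partial f=\Ric(\omega)+\omega$ gives $\partial_t\omega(1+t)=-\Ric(\omega(1+t))$. Along Shi's flow, $\partial_t\tilde\omega(t)=-\Ric(\tilde\omega(t))$. Locally $\Ric(\eta)=-i\partial\bar\partial\log\eta^n$, so
\begin{equation*}
\Ric(\omega(1+t))-\Ric(\tilde\omega(t))=i\partial\bar\partial\log\frac{\tilde\omega(t)^n}{\omega(1+t)^n},
\end{equation*}
and the ratio of volume forms $F(t):=\log\frac{\tilde\omega(t)^n}{\omega(1+t)^n}$ is a globally defined smooth function on $M\times[0,T)$.

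\textbf{Step 2 (solving for $\varphi$).} Set $\varphi(t):=\psi_0+\int_0^t F(s)\,ds$. Then
\begin{equation*}
\partial_t\bigl(\tilde\omega(t)-\omega(1+t)-i\partial\bar\partial\varphi(t)\bigr)=-\Ric(\tilde\omega)+\Ric(\omega(1+t))-i\partial\bar\partial F=0,
\end{equation*}
and at $t=0$ this form equals $\omega_0-\omega-i\partial\bar\partial\psi_0=0$. Hence $\tilde\omega(t)=\omega(1+t)+i\partial\bar\partial\varphi(t)$ for all $t$, and by construction $\partial_t\varphi=F=\log\frac{(\omega(1+t)+i\partial\bar\partial\varphi)^n}{\omega(1+t)^n}$. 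So $\varphi$ solves the Monge--Ampère flow with $\varphi(0)=\psi_0$. Smoothness of $\varphi$ follows from the smoothness of Shi's flow and of the self-similar solution.

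\textbf{Step 3 ($JX$-invariance).} This is where the real work lies. By Proposition \ref{quantification of initial data}, $\psi_0$ is $JX$-invariant. It then suffices to show that $F(t)$ is $JX$-invariant, which holds if both $\tilde\omega(t)$ and $\omega(1+t)$ are invariant under the flow $\theta_s$ of $JX$.

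\emph{Invariance of $\omega(1+t)$.} $JX$ is real holomorphic and Killing for $g$, being the Hamiltonian field of $f$ for a Kähler structure with $X$ real holomorphic. It commutes with $X$, so $\Phi_{1+t}$ preserves $JX$-invariance and $\omega(1+t)$ is $JX$-invariant.

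\emph{Invariance of $\tilde\omega(t)$.} Since $\theta_s$ is a biholomorphism and $\theta_s^*g_0=g_0$, the family $\theta_s^*\tilde g(t)$ is again a complete Kähler--Ricci flow with bounded curvature on compact time intervals, with the same initial data. Uniqueness of Shi's flow (Chen--Zhu uniqueness in the bounded curvature class) then gives $\theta_s^*\tilde g(t)=\tilde g(t)$.

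It follows that $F$, and hence $\varphi$, is $JX$-invariant. The main point requiring care is precisely this appeal to uniqueness within the bounded curvature class. Everything else is a direct computation, and it is the choice of normalization in Step 2 (fixing $\varphi$ by its time derivative rather than by an arbitrary potential) that makes the Monge--Ampère flow hold exactly rather than up to an additive function of time.
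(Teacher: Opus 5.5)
Your proposal is correct and matches the standard argument behind the cited result \cite[Proposition 3.10]{longteng2}; the paper itself gives no proof beyond the citation. You define $\varphi(t)=\psi_0+\int_0^t\log\frac{\tilde\omega(s)^n}{\omega(1+s)^n}\,ds$, check that $\tilde\omega(t)-\omega(1+t)-i\partial\bar\partial\varphi(t)$ is constant in time and vanishes at $t=0$, and deduce $JX$-invariance from the $JX$-invariance of $\psi_0$ and $\omega(1+t)$ together with uniqueness of Shi's flow in the bounded-curvature class.
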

In this section, we shall also denote $\tilde g(t)$(resp. $\tilde \omega(t)$) by $g_\varphi(t)$(resp.$\omega_\varphi(t)$).
\begin{prop}\label{rough estimates}
    We can take $\lambda>>1$ sufficiently large such that on $\{(x,t)\ |\ r(x)^2\ge\lambda(t+1),\ t\in[0,T)\}$, we have that
    \begin{enumerate}
        \item $e^{-a}g(t+1)\le g_\varphi(t)\le e^ag(t+1)$;
        \item $(e^{-a}-1)f-C(t+1)\le\varphi(t)\le (e^a-1)f+C(t+1)$;
        \item $(e^{-a}-1)f-C(t+1)\le\frac{X}{2}\cdot\varphi(t)\le (e^a-1)f+C(t+1)$.
    \end{enumerate}
\end{prop}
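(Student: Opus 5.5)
We prove the three items in order: first the metric comparison (1), then integrate along $t$ to get (3) for $\frac{X}{2}\cdot\varphi$, and finally integrate along the flow of $X$ to get (2), exactly as in Proposition \ref{quantification of initial data}.

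\emph{Step 1: metric equivalence.} By the Corollary following Proposition \ref{perelman+shi}, on $\{r^2\ge\lambda(t+1)\}$ we have $|\tilde g(t)-g(t+1)|_{g(t+1)}\le C'_0$. This is too weak, since we need the bi-Lipschitz constant $e^{a}$. Instead we use curvature decay to control how the bi-Lipschitz norm evolves in time. On $\{r^2\ge\lambda+\lambda t\}$, Proposition \ref{perelman+shi} gives $|\Ric(\tilde g(s))|\le C_0 r^{-2}$, and the self-similar flow satisfies $|\Ric(g(s+1))|\le C r^{-2}$ there by Theorem \ref{theorem of conlonderuellesun}. Integrating $\partial_s\tilde g=-\Ric(\tilde g)$ and $\partial_s g(s+1)=-\Ric(g(s+1))$ over $[0,t]$ at a fixed point $x$ with $r(x)^2\ge\lambda(t+1)$ gives
\begin{equation*}
\operatorname{BL}_x(\tilde g(t),g(t+1))\le \operatorname{BL}_x(g_0,g(1))+2Ct\,r(x)^{-2}\le \operatorname{BL}_x(g_0,g)+2C\lambda^{-1}.
\end{equation*}
Here the region is monotone, so $x$ lies in the good region for all $s\le t$. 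Since $\operatorname{BL}_\infty(g,g_0)<a$ strictly, there is $\varepsilon>0$ with $\operatorname{BL}_x(g_0,g)\le a-\varepsilon$ once $r$ is large. Choosing $\lambda\gg1$ with $2C\lambda^{-1}<\varepsilon$ yields (1).

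\emph{Step 2: bound on $\frac X2\cdot\varphi$.} Since $\varphi$ is $JX$-invariant, $\partial\bar\partial\varphi(t)(X,X)$ equals $X\cdot(\frac X2\cdot\varphi)$ up to normalisation, as in the proof of Proposition \ref{quantification of initial data}. Item (1) gives
\begin{equation*}
(e^{-a}-1)g(t+1)(X,X)\le \partial\bar\partial\varphi(X,X)\le (e^{a}-1)g(t+1)(X,X).
\end{equation*}
Using $d\pi(X)=r\partial_r$, the self-similar structure (so $g(t+1)(X,X)=X\cdot((t+1)\Phi_{t+1}^*f)$), and Corollary \ref{proper function with t}, we integrate along the integral curves of $X$ (radially) from the boundary $\{r^2=\lambda(t+1)\}$ outward. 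This produces
\begin{equation*}
\tfrac X2\cdot\varphi(t)\le (e^a-1)(t+1)\Phi_{t+1}^*f+B(t),
\end{equation*}
where $B(t)$ is the boundary value at $r^2=\lambda(t+1)$. Since $(t+1)\Phi_{t+1}^*f\le r^2/2+C(t+1)\le f+C(t+1)$, the leading term is $(e^a-1)f$ up to $C(t+1)$.

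\emph{Step 3: the boundary term, which is the main obstacle.} We must show $|\frac X2\cdot\varphi|\le C(t+1)$ on $\{r^2=\lambda(t+1)\}$. For this we use the Monge–Ampère flow of Proposition \ref{MA flow}. The quantity $\partial_t\varphi=\log\frac{\omega_\varphi^n}{\omega(t+1)^n}$ is bounded by $na$ on the good region by (1). Differentiating the equation along $X$, or using the evolution of $\frac X2\cdot\varphi$, which is (up to the soliton normalisation) $\partial_t$-controlled by $\Delta_{\omega_\varphi}$ and the Ricci potential, gives $|\partial_t(\frac X2\cdot\varphi)|\le C$ on the region. Here we combine $\Ric$ bounds from Proposition \ref{perelman+shi} with $|X|^2_{g(t+1)}\lesssim r^2$, which is $O(t+1)$ on the boundary. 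Since the boundary moves, we instead integrate in $t$ at fixed $x$ from the time it entered the region, where $r(x)^2=\lambda(s+1)$ with $s\le t$, to reduce to the initial bounds of Proposition \ref{quantification of initial data}. That gives $|\frac X2\cdot\varphi(t)|(x)\le C(1+t)$ plus $(e^{\pm a}-1)f$, proving (3). I expect getting a time-uniform linear-in-$t$ bound for $\partial_t(X\cdot\varphi)$ to be the delicate point.

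\emph{Step 4: bound on $\varphi$.} Integrate (3) radially as in Proposition \ref{quantification of initial data}, absorbing the $\log r$ error into the slack $\sigma f$ by enlarging $\lambda$. The boundary value $|\varphi(t)|\le C(t+1)$ at $r^2=\lambda(t+1)$ comes from $|\partial_t\varphi|\le na$ integrated in time, again starting from the initial bound on $\psi_0$. This proves (2).
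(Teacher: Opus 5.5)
Your proof is correct, and its core is the paper's argument. Step 1 matches the paper: on the shrinking region both $\tilde g(s)$ and $g(s+1)$ have $|\Ric|\lesssim r^{-2}$, so integrating over $[0,t]$ costs at most $2C/\lambda$ in the bi-Lipschitz exponent, and the slack in $\operatorname{BL}_\infty(g,g_0)<a$ absorbs this.

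For (2) and (3) you take a detour the paper avoids. You integrate radially outward from the moving boundary $\{r^2=\lambda(t+1)\}$, and you get the boundary values by integrating $\partial_t\varphi$ and $\partial_t(\frac X2\cdot\varphi)$ in time from $0$ and invoking Proposition \ref{quantification of initial data}. That time integration works at every point of the region, not only on its boundary. If $r(x)^2\ge\lambda(t+1)$, then $r(x)^2\ge\lambda(s+1)$ for all $s\in[0,t]$, as you yourself note in Step 1. Your phrase ``from the time it entered the region'' in Step 3 has this backwards: points only leave the region.

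Hence $|\varphi(t)-\psi_0|\le nat$ and $|\frac X2\cdot\varphi(t)-\frac X2\cdot\psi_0|\le C_1t$ hold pointwise on the whole region. Proposition \ref{quantification of initial data} then gives (2) and (3) at once, and this is exactly the paper's proof. The radial integrations in Steps 2 and 4 are valid but gain nothing. Step 4 also costs extra bookkeeping: to absorb the $\log r$ term you must carry a slack exponent $e^{a-\varepsilon/2}$ through Steps 1 and 2 rather than the bare $e^{a}$ of (1).

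The bound you flag as delicate is routine. Differentiating the equation of Proposition \ref{MA flow} along the real holomorphic field $X$ gives
\begin{equation*}
X\cdot\partial_t\varphi=\tr_{\omega_\varphi}\mathcal{L}_X\omega(t+1)-\tr_{\omega(t+1)}\mathcal{L}_X\omega(t+1)+\tr_{\omega_\varphi}\mathcal{L}_X\big(i\partial\bar\partial\varphi\big).
\end{equation*}
The tensor $\nabla^{g(t+1)}X$ corresponds to $(t+1)\Ric(g(t+1))+g(t+1)$, and $g$ has bounded curvature, so $|\nabla^{g(t+1)}X|_{g(t+1)}\le C$. With (1), this bounds the first two terms. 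The last term is at most
\begin{equation*}
C\big(|X|_{g(t+1)}|\nabla^{g(t+1)}(\tilde g(t)-g(t+1))|_{g(t+1)}+|\nabla^{g(t+1)}X|_{g(t+1)}|\tilde g(t)-g(t+1)|_{g(t+1)}\big)\le C,
\end{equation*}
using $|X|_{g(t+1)}\lesssim r$ and the $k=0,1$ cases of the Corollary following Proposition \ref{perelman+shi}. What enters is a first-derivative bound on $\tilde g(t)-g(t+1)$ paired with $|X|\lesssim r$, not a Ricci bound paired with $|X|^2$. The resulting bound is $O(1)$ on the entire region, not just $O(t+1)$ on its boundary.
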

\begin{proof}
    First take $\lambda>>1$ such that on $\{r^2\ge\lambda\}$, there is an $\varepsilon>0$ such that
    \begin{equation*}
       e^{-a+\varepsilon}g \le g_0\le e^{a-\varepsilon}g.   
    \end{equation*}
Since on $\{r^2\ge\lambda(t+1)\}$, we have $|\Rm(g_\varphi(t))|_{g_\varphi(t)}\le \frac{C}{r^2}$, it follows that
\begin{equation*}
    e^{-\frac{Ct}{r^2}}g_0\le g_\varphi(t)\le e^{\frac{Ct}{r^2}}g_0.
\end{equation*}
Similarly, we have
\begin{equation*}
    e^{-\frac{Ct}{r^2}}g\le g(t+1)\le e^{\frac{Ct}{r^2}}g.
\end{equation*}
Combine these, we get
\begin{equation*}
   \begin{split}
        &g_\varphi(t)\le e^{a-\varepsilon+\frac{2Ct}{r^2}}g\le e^{a-\varepsilon+\frac{2C}{\lambda}}g;\\
        &g_\varphi(t)\ge e^{-a+\varepsilon-\frac{2Ct}{r^2}}g(t+1)\le e^{-a+\varepsilon-\frac{2C}{\lambda}}g.
   \end{split}
\end{equation*}
Take $\lambda>>1$ such that $\varepsilon-\frac{2C}{\lambda}>0$, then we have 
\begin{equation*}
    e^{-a}g(t+1)\le g_\varphi(t)\le e^ag(t+1),
\end{equation*}
holds on $\{r^2\ge\lambda(t+1)\}$.
   
    Recall the equation of Monge-Amp\`ere flow:
    \begin{equation*}
        \frac{\partial}{\partial t}\varphi=\log\frac{\omega_\varphi(t)^n}{\omega(t+1)^n}.
    \end{equation*}
    By integrating, we get
    \begin{equation*}
        |\varphi(t)-\varphi(0)|\le na t.
    \end{equation*}
    Recall that $\varphi(0)=\psi_0\le (e^a-1)f+C$ as in Proposition \ref{quantification of initial data}, then we have
    \begin{equation*}
         \varphi(t)\le (e^a-1)f+C+nat.
    \end{equation*}
Similarly, it follows that
\begin{equation*}
    \varphi(t)\ge (e^{-a}-1)f-C-nat.
\end{equation*}
    We also have
    \begin{equation*}
        \begin{split}
            \frac{X}{2}\cdot\frac{\partial}{\partial t}\varphi&=\tr_{\omega_\varphi(t)}\mathcal{L}_{\frac{X}{2}}\omega_\varphi(t)-\tr_{\omega(t)}\mathcal{L}_{\frac{X}{2}}\omega(t+1)\\
            &=\tr_{\omega_\varphi(t)}\mathcal{L}_{\frac{X}{2}}\omega(t+1)-\tr_{\omega(t)}\mathcal{L}_{\frac{X}{2}}\omega(t+1)+\tr_{\omega_\varphi(t)}\mathcal{L}_{\frac{X}{2}}i\partial\bar\partial\varphi(t).
        \end{split}
    \end{equation*}
    Since we can control $\tr_{\omega_\varphi(t)}\mathcal{L}_{\frac{X}{2}}i\partial\bar\partial\varphi(t)$ in the following way: \begin{equation*}
    \begin{split}
            |\tr_{\omega_\varphi(t)}\mathcal{L}_{\frac{X}{2}}i\partial\bar\partial\varphi(t)|&\le C(n)|g_\varphi(t)|_{g(t+1)}\left(|X|_{g(t)}|\nabla^{g(t+1)}i\partial\bar\partial\varphi(t)|_{g(t+1)}+|\nabla^{g(t+1)}X|_{g(t+1)}|i\partial\bar\partial\varphi(t)|_{g(t+1)}\right)\\
            &\le C.
    \end{split}
    \end{equation*}
    Therefore, $ |\frac{X}{2}\cdot\frac{\partial}{\partial t}\varphi|$ is uniformly bounded by some constant $C_1>0$ on $\{r^2\ge\lambda(t+1)\}$. By integration, we have
    \begin{equation*}
         |\frac{X}{2}\cdot\varphi(t)-\frac{X}{2}\cdot\varphi(0)|\le C_1t.
    \end{equation*}
    Recall that $\frac{X}{2}\cdot\varphi(0)=\frac{X}{2}\cdot\psi_0\le (e^a-1)f+C$. Then we have
    \begin{equation*}
        \frac{X}{2}\cdot\varphi(t)\le (e^a-1)f+C+C_1t;\quad \frac{X}{2}\cdot\varphi(t)\ge (e^{-a}-1)f-C-C_1t.
    \end{equation*}
    Take $C=C+C_1+na$, we have
    \begin{equation*}
        \begin{split}
           & (e^{-a}-1)f-C(t+1)\le \varphi(t)\le (e^a-1)f+C(t+1);\\
           &(e^{-a}-1)f-C(t+1)\le\frac{X}{2}\cdot\varphi(t)\le (e^a-1)f+C(t+1).  
        \end{split}
          \end{equation*}
\end{proof}
\subsection{Uniform estimates along normalized K\"ahler-Ricci flow}
In this section, we will establish some uniform estimates along the normalized K\"ahler-Ricci flow. We first introduce the self-similar variables:
\begin{definition}[Self-similar variables]\label{self-similar variables}
    Let $\tilde g(t)_{t\in [0,T)}$ be a solution to K\"ahler-Ricci flow, the corresponding normalized K\"ahler-Ricci flow in self-similar variables $\bar g(\tau)_{\tau\in [0,\log(1+T))}$ is defined as follows:
    \begin{equation*}
        \bar g(\tau)=e^{-\tau}\Phi_{e^{-\tau}}^*\tilde g(e^\tau-1).
    \end{equation*}
    Moreover, $\bar g(\tau)$ is a solution to \emph{the normalized K\"ahler-Ricci flow} equation:
    \begin{equation*}
        \frac{\partial}{\partial\tau}\bar g(\tau)=\mathcal{L}_{\frac{X}{2}}\bar g(\tau)-\Ric(\bar g(\tau))-\bar g(\tau),\quad \tau\in [0,\log(1+T)).
    \end{equation*}
\end{definition}
Let $\varphi(t)_{t\in [0,T)}$ be the solution to Monge-Amp\`ere flow in Proposition \ref{MA flow}, we define $\psi(\tau):=e^{-\tau}\Phi_{e^{-\tau}}^*\varphi(e^\tau-1)$ for all $\tau\in [0,\log(T+1)$, then $\psi$ is a $JX-$invariant solution to the \emph{normalized Monge-Amp\`ere flow}:
\begin{equation*}
   \begin{split}
        \frac{\partial}{\partial\tau}\psi(\tau)&=\log\frac{(\omega+i\partial\bar\partial\psi(\tau))^2}{\omega^n}+\frac{X}{2}\cdot\psi(\tau)-\psi(\tau);\\
        \psi(0)&=\psi_0.
   \end{split}
\end{equation*}
And we have immediately that $e^{-\tau}\Phi_{e^{-\tau}}^*\tilde \omega(e^\tau-1)=\omega+i\partial\bar\partial\psi(\tau)$. We use $\omega_\psi(\tau)$ to denote $\omega+i\partial\bar\partial\psi(\tau)$ and $g_\psi(\tau)$ denotes its Riemannian metric. From Proposition \ref{rough estimates}, we state:
\begin{prop}\label{boundary data}
    On $\{r^2\ge \lambda\}\times [0,\log(1+T))$, there exists a constant $C>0$ such that
    \begin{enumerate}
    \item\label{prop1} $e^{-a}g\le g_\psi\le e^ag$;
        \item\label{prop2} $(e^{-a}-1)f-C\le \psi\le (e^a-1)f+C$;
        \item\label{prop3} $(e^{-a}-1)f-C\le \frac{X}{2}\cdot\psi\le (e^a-1)f+C$.
    \end{enumerate}
       Moreover, (ii) and (iii) also hold on the time slice $M\times\{0\}$.
\end{prop}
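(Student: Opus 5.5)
The plan is to transport Proposition \ref{rough estimates} through the change to self-similar variables. I will track three things: where the region $\{r^2\ge\lambda(t+1)\}$ goes, how each estimated quantity transforms under the rescaling $e^{-\tau}\Phi^*_{e^{-\tau}}$, and how $f$ and $X$ behave under $\Phi_s$.

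\textbf{Step 1: the domain.} Set $t=e^\tau-1$, so that $t+1=e^\tau$. Since $d\pi(X)=r\partial_r$, the map $\Phi_s$ (the flow of $-\frac{X}{2s}$ at time $s$) satisfies $r\circ\Phi_s=s^{-1/2}r$; this is the rescaling already used in the proof of Lemma \ref{isometry lemma}, and it is consistent with $g(t)=t\Phi_t^*g$ and with Corollary \ref{proper function with t}. Hence $x\in\{r^2\ge\lambda\}$ if and only if $\Phi_{e^{-\tau}}(x)\in\{r^2\ge\lambda e^{\tau}\}=\{r^2\ge\lambda(t+1)\}$. So evaluating $\bar g$ or $\psi$ on $\{r^2\ge\lambda\}$ at time $\tau$ amounts to evaluating $\tilde g$ or $\varphi$ at time $t$ inside the region where Proposition \ref{rough estimates} applies.

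\textbf{Step 2: the metric bound (\ref{prop1}).} Pulling back $e^{-a}g(t+1)\le g_\varphi(t)\le e^ag(t+1)$ by $\Phi_{e^{-\tau}}$ and multiplying by $e^{-\tau}$ gives a sandwich of $g_\psi(\tau)$. The comparison metric becomes $e^{-\tau}\Phi^*_{e^{-\tau}}g(e^\tau)=e^{-\tau}e^{\tau}\Phi^*_{e^{-\tau}}\Phi^*_{e^{\tau}}g=g$. This uses the group property $\Phi_{e^\tau}\circ\Phi_{e^{-\tau}}=\mathrm{id}$, which is the inverse relation built into Definition \ref{D}. The result is $e^{-a}g\le g_\psi(\tau)\le e^ag$.

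\textbf{Step 3: the potential bounds (\ref{prop2}) and (\ref{prop3}).} Use $\psi(\tau)=e^{-\tau}\Phi^*_{e^{-\tau}}\varphi(e^\tau-1)$. The scaling of $\Phi$ combined with Lemma \ref{proper function} gives $s\,\Phi_s^*f=f+O(s)$ up to bounded error. Concretely, $\frac{r^2}{2}\le f\le \frac{r^2}{2}+C$ together with $r\circ\Phi_s=s^{-1/2}r$ yields $e^{-\tau}\Phi^*_{e^{-\tau}}f\le f+C$ and $\ge f-C$ for $\tau\ge0$. Pulling back $(e^{-a}-1)f-C(t+1)\le\varphi\le(e^a-1)f+C(t+1)$ and multiplying by $e^{-\tau}=(t+1)^{-1}$ then gives (\ref{prop2}) with a new constant. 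The linear-in-$t$ error is exactly absorbed by the factor $e^{-\tau}$; this is why the rough estimates were stated with $C(t+1)$.

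For (\ref{prop3}), note that $X$ commutes with $\Phi_s$, since $\Phi_s$ is generated by a multiple of $X$. Therefore $\frac X2\cdot\psi(\tau)=e^{-\tau}\Phi^*_{e^{-\tau}}\big(\frac X2\cdot\varphi\big)$, and the same argument applies. At $\tau=0$ we have $\psi(0)=\psi_0$, so the final statement about the time slice $M\times\{0\}$ is exactly Proposition \ref{quantification of initial data}.

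\textbf{Main obstacle.} The one delicate point is bookkeeping the direction and normalization of $\Phi_s$, so that the region maps onto $\{r^2\ge\lambda\}$ and not onto its complement. I will fix this by checking against $g(t)=t\Phi_t^*g$ and Corollary \ref{proper function with t}, which gives $tf(\Phi_t(x))\approx r(x)^2/2$ and hence $r\circ\Phi_t=t^{-1/2}r$ up to bounded error. The argument only needs this relation up to a constant factor, which can be absorbed into $\lambda$ or $C$.
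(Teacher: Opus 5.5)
Your proposal is correct and follows the paper's route. You transport Proposition \ref{rough estimates} through the rescaling $e^{-\tau}\Phi_{e^{-\tau}}^*$, using $r\circ\Phi_s=s^{-1/2}r$ to match $\{r^2\ge\lambda\}$ with $\{r^2\ge\lambda(t+1)\}$, and you read off the $\tau=0$ slice from Proposition \ref{quantification of initial data}. The only cosmetic difference is in controlling $e^{-\tau}\Phi^*_{e^{-\tau}}f$: the paper uses monotonicity of $t\mapsto t\Phi_t^*f$ to get $e^{-\tau}\Phi^*_{e^{-\tau}}f\le f$ exactly, while you use $\frac{r^2}{2}\le f\le \frac{r^2}{2}+C$ to get the same bound up to an additive constant, which is equally sufficient.
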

\begin{proof}
   \ref{prop1}-\ref{prop3} follow directly from Proposition \ref{quantification of initial data} and \ref{rough estimates} and the fact that the function $t\Phi_t^*f$ is an increasing function on $t>0$. As a consequence, by recalling the normalized Monge-Ampère flow, we have the following control:
    \begin{equation*}
        |\dot\psi|=|\log\frac{\omega_\psi^n}{\omega^n}+\frac{X}{2}\cdot\psi-\psi|\le (e^a-e^{-a})f+2C+na.
    \end{equation*}
\end{proof}
\begin{prop}
    The function $f_\psi:=f+\frac{X}{2}\cdot\psi$ is a Hamiltonian of vector field $X$ with respect to metric $g_\psi$. Along the flow, it holds that on $\{r^2\ge\lambda\}\times [0,\log(1+T))$
    \begin{equation*}
      e^{-a}f-C  \le f_\psi\le e^a f+C.
    \end{equation*}
    Moreover, the above inequality also holds on time slice $M\times \{0\}$.
    
    In addition, on $M,$ we have
    \begin{equation*}
        \inf_M f_\psi\ge \inf_M f\ge \varepsilon>0.
    \end{equation*}
\end{prop}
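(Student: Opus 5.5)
The proof splits into three claims: $f_\psi$ is a Hamiltonian for $X$ with respect to $g_\psi$, two-sided bounds hold on $\{r^2\ge\lambda\}$ and at $\tau=0$, and a global lower bound holds.

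\textbf{Hamiltonian property.} Since $f$ is a Hamiltonian of $X$ for $\omega$, we have $\iota_X\omega = d^c f$ up to the paper's normalization; equivalently, $X^{1,0}$ is the $g$-gradient of $f$, i.e. $\iota_{X^{1,0}}\omega = i\bar\partial f$ (up to constants). For a $JX$-invariant function $\psi$ we will use Cartan's formula, $\mathcal{L}_{X^{1,0}}\bar\partial\psi = \bar\partial(X^{1,0}\cdot\psi)$, together with the holomorphicity of $X^{1,0}$, to obtain
\[
\iota_{X^{1,0}}\, i\partial\bar\partial\psi = i\,\bar\partial\bigl(X^{1,0}\cdot\psi\bigr).
\]
Because $JX\cdot\psi=0$, we have $X^{1,0}\cdot\psi=\tfrac12 X\cdot\psi$. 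Adding the two identities gives $\iota_{X^{1,0}}\omega_\psi = i\bar\partial\bigl(f+\tfrac{X}{2}\cdot\psi\bigr)$, which is the claim.

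\textbf{Two-sided bounds.} These follow by adding $f$ to item \ref{prop3} of Proposition \ref{boundary data}, which gives
\[
e^{-a}f-C\le f+\tfrac{X}{2}\cdot\psi\le e^{a}f+C.
\]
This holds on $\{r^2\ge\lambda\}\times[0,\log(1+T))$, and also on $M\times\{0\}$ because the same proposition asserts item \ref{prop3} on that time slice.

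\textbf{Global lower bound.} This is the main step. By the Hamiltonian identity, $|\partial f_\psi|^2_{g_\psi}$ equals $g_\psi(X,X)$ up to a constant, which vanishes exactly at the zeros of $X$. The plan is to show that $f_\psi$ attains its infimum at a zero of $X$, where $f_\psi = f + 0 = f$.

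Concretely, $f_\psi$ is proper and bounded below, since $f_\psi\ge e^{-a}f-C$ near infinity, so it attains a minimum at some point $x_0$. At $x_0$ we have $df_\psi=0$, hence $g_\psi(X,X)=0$, so $X(x_0)=0$. At such a point $\tfrac{X}{2}\cdot\psi(x_0)=0$, and therefore $f_\psi(x_0)=f(x_0)\ge\inf_M f\ge\varepsilon$ by Proposition \ref{lower bound of f}.

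One caveat is that $\{r^2\le\lambda\}$ is compact, so $f_\psi$ is continuous there and properness holds on all of $M$. If instead the minimum sat on the boundary region, the decay bound above still guarantees that the infimum is attained.

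The subtle point I would double-check is the factor normalization between $X$, $X^{1,0}$ and the Kähler-form convention. This affects the Hamiltonian identity only by constants and does not affect the critical-point argument.
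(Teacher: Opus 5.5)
Your proposal is correct and follows the paper's proof essentially step by step. The two-sided bounds come from adding $f$ to the third item of Proposition~\ref{boundary data}, which holds on $\{r^2\ge\lambda\}$ and on the slice $M\times\{0\}$. The global lower bound uses properness of $f_\psi$ on each time slice: at a minimum point $X=\nabla^{g_\psi}f_\psi$ vanishes, so $\frac{X}{2}\cdot\psi=0$ there and $f_\psi=f\ge\varepsilon$. The only difference is that you verify the Hamiltonian identity $\iota_{X^{1,0}}\omega_\psi=i\bar\partial f_\psi$ directly, from the holomorphicity of $X^{1,0}$ and $JX\cdot\psi=0$, whereas the paper simply cites \cite[Lemma 4.12]{ChenHallgrenLucas}.
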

\begin{proof}
The function $f_\psi$ is a Hamiltonian of vector field $X$ is due to the fact that $JX\cdot\psi=0$(see \cite[Lemma 4.12]{ChenHallgrenLucas}).

    Indeed, from the previous proof, one can clearly see that along the normalized K\"ahler-Ricci flow, we actually have
    \begin{equation*}
        \frac{X}{2}\cdot\psi\ge (e^{-a}-1)f-C.
    \end{equation*}
    It follows that $f_\psi=f+\frac{X}{2}\cdot\psi\ge e^{-a}f-C$. This fact implies that $f_\psi$ is a proper function. Suppose that $x\in M$ is a minimum point of $f_\psi$, then necessarily we must have
    \begin{equation*}
        X(x)=\nabla^{g_\psi}f_\psi(x)=0.
    \end{equation*}
Then we have
\begin{equation*}
    f_\psi(x)=f(x)\ge\inf_M f\ge \varepsilon>0.
\end{equation*}
\end{proof}
With the boundary data provided by Proposition \ref{boundary data}, we can apply the maximum principle, as in \cite[Section 4]{ChenHallgrenLucas}, on the region ${r^2 \le \lambda} \times [0, \log(T+1))$. This yields the following results:

\begin{corollary}\label{basic estimates}
    On $\{r^2\le\lambda\}\times [0,\log(T+1))$, we have
    \begin{equation*}
        \begin{split}
            &-(e^a-1)f_\psi-C\le \psi\le (e^a-1)f+C;\\
            &|\dot\psi|\le (e^{2a}-1)f_\psi+C;\quad |X|_{g_\psi}^2\le Cf_{\psi}+C.
        \end{split}
    \end{equation*}
    holds for some uniform constant $C>0$.
\end{corollary}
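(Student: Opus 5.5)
We establish the three bounds on the compact region $K:=\{r^2\le\lambda\}$ by the maximum principle for the normalized Monge--Amp\`ere flow
\begin{equation*}
\dot\psi=\log\frac{\omega_\psi^n}{\omega^n}+\frac{X}{2}\cdot\psi-\psi,
\end{equation*}
working with the drift Laplacian $\Delta_{\omega_\psi}+\frac{X}{2}$ of $\omega_\psi(\tau)$. The parabolic boundary of $K\times[0,\log(T+1))$ consists of the time slice $\{\tau=0\}$ and the lateral boundary $\{r^2=\lambda\}$. On both, Proposition \ref{boundary data} and the proposition on $f_\psi$ supply the required inequalities with some constant $C$. Since $K$ is compact and $f\ge\varepsilon$, $f_\psi\ge\varepsilon$ (and $f_\psi\ge e^{-a}f-C$ on the boundary), we may freely convert between $f$ and $f_\psi$ on $\partial K$ by enlarging $C$. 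In each step we compare the quantity $Q$ with a barrier $B$, compute $(\partial_\tau-\Delta_{\omega_\psi}-\frac{X}{2})(Q-B)$, and show it has a sign that forbids an interior extremum in $K$. Then $Q-B$ is controlled by its values on the parabolic boundary.

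\emph{Step 1 (bound on $\dot\psi$).} Differentiating the flow in $\tau$ gives
\begin{equation*}
\Big(\partial_\tau-\Delta_{\omega_\psi}-\tfrac{X}{2}\Big)\dot\psi=-\dot\psi.
\end{equation*}
We check that $f_\psi$ is a good barrier. Since $f_\psi$ is the Hamiltonian of $X$ for $\omega_\psi$, the same computation as in the soliton identities gives $\Delta_{\omega_\psi}f_\psi=\frac12\tr_{\omega_\psi}\mathcal L_X\omega_\psi$. From $\partial_\tau\psi$ one also obtains $\partial_\tau f_\psi=\frac{X}{2}\cdot\dot\psi$. Combining these yields
\begin{equation*}
\Big(\partial_\tau-\Delta_{\omega_\psi}-\tfrac{X}{2}\Big)f_\psi=-f_\psi,
\end{equation*}
exactly as in \cite[Section 4]{ChenHallgrenLucas}. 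Hence $\dot\psi\mp\big((e^{2a}-1)f_\psi+C\big)$ satisfies an equation with zeroth-order term $-(\cdot)$ plus a constant of favourable sign. If a positive interior maximum existed, the left side would be $\ge 0$ while the right side is $<0$, a contradiction. So the maximum is attained on the parabolic boundary, where Proposition \ref{boundary data} gives $|\dot\psi|\le(e^a-e^{-a})f+C\le(e^{2a}-1)f_\psi+C'$.

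\emph{Step 2 (bounds on $\psi$).} We first check that
\begin{equation*}
\Big(\partial_\tau-\Delta_{\omega_\psi}-\tfrac{X}{2}\Big)\psi=\dot\psi-\tr_{\omega_\psi}(\omega_\psi-\omega)-\tfrac{X}{2}\cdot\psi=\log\frac{\omega_\psi^n}{\omega^n}-\psi-n+\tr_{\omega_\psi}\omega .
\end{equation*}
For the upper bound we use the barrier $(e^a-1)f+C$. The identity $(\Delta_\omega+\frac X2)f=f$ gives $\Delta_{\omega_\psi}f=\tr_{\omega_\psi}i\partial\bar\partial f=\tr_{\omega_\psi}(\Ric(\omega)+\omega)$, which is controllable on the compact set $K$. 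Alternatively, we argue directly at a maximum point of $\psi-(e^a-1)f$. There $i\partial\bar\partial\psi\le(e^a-1)i\partial\bar\partial f$, which bounds $\log(\omega_\psi^n/\omega^n)$ above, and $\frac X2\cdot\psi=(e^a-1)\frac X2\cdot f$. The flow equation then forces $\partial_\tau(\psi-(e^a-1)f)\le -(\psi-(e^a-1)f)+C$, which yields the bound. For the lower bound we use the barrier $-(e^a-1)f_\psi-C$. At a minimum point of $\psi+(e^a-1)f_\psi$ we combine the $f_\psi$ equation from Step 1 with the concavity of $\log$, which gives $\log(\omega_\psi^n/\omega^n)\ge -n+\dots$ via $\tr_{\omega_\psi}\omega$, and the term $-\psi$ again has a good sign.

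\emph{Step 3 (bound on $|X|^2_{g_\psi}$).} Since $|X|^2_{g_\psi}=|\partial f_\psi|^2_{g_\psi}$ up to a constant, we mimic the soliton identity $|\partial f|^2+R=f$. We compute the evolution of $|\partial f_\psi|^2_{g_\psi}$ under $\partial_\tau-\Delta_{\omega_\psi}-\frac X2$. A Bochner formula gives a negative gradient term $-|\nabla\nabla f_\psi|^2$ and a term involving $\Ric$ of the normalized flow, which cancels against the evolution of the metric, leaving roughly $-|\partial f_\psi|^2$ plus a term $\langle\partial\dot\psi,\partial f_\psi\rangle$. We then apply the maximum principle to $|\partial f_\psi|^2_{g_\psi}-Af_\psi-\dot\psi$, which is bounded below by Step 1, and absorb the cross term. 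On the boundary the bound $|X|^2_{g_\psi}\le e^a|X|^2_g\le Cf$ comes from Proposition \ref{boundary data}\ref{prop1}.

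\emph{Main obstacle.} Step 3 is the hard part, specifically choosing the auxiliary combination so that the curvature and cross terms in the evolution of $|X|^2_{g_\psi}$ cancel exactly. If this fails, we fall back on the fact that $X$ is real holomorphic. Then $\log|X|^2_{g_\psi}$ satisfies $i\partial\bar\partial\log|X|^2\ge -\Ric$-type inequalities, and its time derivative is $\mathcal L$-related to $\dot\psi$ through $\partial_\tau g_\psi(X,X)=X\cdot(\frac X2\cdot\dot\psi)$-type identities. We would then bound $X\cdot\dot\psi$ using Step 1 and interpolation on $K$.
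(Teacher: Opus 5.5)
Your overall strategy matches the paper's. The paper only cites the maximum principle on the compact region $\{r^2\le\lambda\}\times[0,\log(T+1))$ as in \cite[Section 4]{ChenHallgrenLucas}, with lateral data from Proposition \ref{boundary data} and initial data from compactness. Your Steps 1 and 2 are correct as written. The identities $(\partial_\tau-\Delta_{\omega_\psi}-\frac X2)\dot\psi=-\dot\psi$ and $(\partial_\tau-\Delta_{\omega_\psi}-\frac X2)f_\psi=-f_\psi$ hold. For the lower bound, $\log\frac{\omega_\psi^n}{\omega^n}\ge n-\tr_{\omega_\psi}\omega$ gives $(\partial_\tau-\Delta_{\omega_\psi}-\frac X2)\big(\psi+(e^a-1)f_\psi\big)\ge-\big(\psi+(e^a-1)f_\psi\big)$, which is all that is needed.

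\textbf{The gap is Step 3, which you leave unresolved.} The cross term $\langle\partial\dot\psi,\partial f_\psi\rangle$ you anticipate does not occur. If it did, your auxiliary function $|\partial f_\psi|^2_{g_\psi}-Af_\psi-\dot\psi$ could not absorb it: subtracting $\dot\psi$ contributes only the zeroth-order term $+\dot\psi$, and nothing in your argument controls $\nabla\dot\psi$. The fallback via $\log|X|^2$ and interpolation is not an argument either.

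What closes the step is an exact identity. Since $X$ is time-independent, $(\mathcal L_{X/2}g_\psi)(X,X)=\frac X2\cdot|X|^2_{g_\psi}$. The normalized flow $\partial_\tau g_\psi=\mathcal L_{X/2}g_\psi-\Ric(g_\psi)-g_\psi$ then gives
\begin{equation*}
\Big(\partial_\tau-\tfrac X2\Big)|X|^2_{g_\psi}=-\Ric(g_\psi)(X,X)-|X|^2_{g_\psi}.
\end{equation*}
By $JX$-invariance, $JX$ is Killing for $g_\psi$; $J$ is parallel and $\Ric(g_\psi)$ is $J$-invariant. So the Killing Bochner formula with $\Delta_{\omega_\psi}=\frac12\Delta_{g_\psi}$ gives $\Delta_{\omega_\psi}|X|^2_{g_\psi}=|\nabla^{g_\psi}X|^2_{g_\psi}-\Ric(g_\psi)(X,X)$. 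Hence
\begin{equation*}
\Big(\partial_\tau-\Delta_{\omega_\psi}-\tfrac X2\Big)|X|^2_{g_\psi}=-|\nabla^{g_\psi}X|^2_{g_\psi}-|X|^2_{g_\psi}\le 0.
\end{equation*}
Equivalently, in your own computation for $|\nabla f_\psi|^2_{g_\psi}$, use $(\partial_\tau-\Delta_{\omega_\psi}-\frac X2)f_\psi=-f_\psi$ rather than $\partial_\tau f_\psi=\frac X2\cdot\dot\psi$. Then both the $\mathcal L_{X/2}g_\psi$ terms and the Ricci terms cancel exactly, leaving $-|\nabla^2 f_\psi|^2-|\nabla f_\psi|^2$.

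The maximum principle on the compact region then bounds $|X|^2_{g_\psi}$ by its supremum over the parabolic boundary. On $\{r^2=\lambda\}$ this is at most $e^a\sup_{\{r^2=\lambda\}}|X|^2_g$ by Proposition \ref{boundary data}\ref{prop1}, and at $\tau=0$ it is finite by compactness. Since $f_\psi\ge\varepsilon$, the stated bound follows.

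A side remark that simplifies Steps 1 and 2 as well: on this compact region the barriers involving $f$ and $f_\psi$ are unnecessary. Each quantity is driven toward zero at interior extrema. For the upper bound on $\psi$, at an interior maximum of $\psi$ one has $\omega_\psi\le\omega$ and $X\cdot\psi=0$, so $\partial_\tau\psi\le-\psi$. You therefore get uniform constant bounds for $\psi$, $\dot\psi$ and $|X|^2_{g_\psi}$, which imply the stated ones.
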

 The following theorem, based on Yau’s $C^2$-estimate, implies that if the two potentials $f_\psi$ and $f$ are comparable, then the normalized flow remains uniformly bi-Lipschitz equivalent to the soliton metric.

\begin{theorem}[$C^2$-estimate]
    If along the normalized K\"ahler-Ricci flow $\omega_\psi(\tau)_{\tau\in [0,\log (T+1))}$, the following inequality
    \begin{equation*}
        \frac{1}{D}f\le f_\psi\le Df,
    \end{equation*}
    holds for some uniform constant $D>1$. Then there exists a uniform constant $C>1$ such that along the flow, we have
    \begin{equation*}
        \frac{1}{C}g\le g_\psi\le Cg.
    \end{equation*}
\end{theorem}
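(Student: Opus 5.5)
We prove the $C^2$-estimate by a parabolic Chern–Lu/Yau type maximum principle argument for the trace $\operatorname{tr}_\omega\omega_\psi$. The comparability of Hamiltonians $f$ and $f_\psi$ supplies the weights that make the maximum principle work on a noncompact manifold. Because $\omega_\psi^n/\omega^n = \exp(\dot\psi - \tfrac{X}{2}\cdot\psi + \psi)$, a two-sided bound on $\operatorname{tr}_\omega\omega_\psi$ together with a two-sided bound on the volume ratio gives the full bi-Lipschitz equivalence. The plan therefore has three parts: bound the volume form, bound one trace, and deduce the other trace from the first two.

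\textbf{Step 1: the volume ratio.} Write $F:=\log\frac{\omega_\psi^n}{\omega^n}=\dot\psi-\tfrac{X}{2}\cdot\psi+\psi$. Since $\tfrac{X}{2}\cdot\psi=f_\psi-f$, the hypothesis $D^{-1}f\le f_\psi\le Df$ gives $|\tfrac X2\cdot\psi|\le Cf$. Corollary \ref{basic estimates} gives $|\psi|,|\dot\psi|\le Cf_\psi+C\le C'f$. This only yields $|F|\le Cf$, which is not bounded, so we need more.

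On $\{r^2\ge\lambda\}$, Proposition \ref{boundary data} already gives $|F|\le na$. On the compact-in-space region $\{r^2\le\lambda\}$ the function $f$ is bounded, so the bound $|F|\le Cf$ becomes uniform there. Hence $F$ is bounded on all of $M\times[0,\log(T+1))$.

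\textbf{Step 2: the trace bound.} Consider
\begin{equation*}
Q:=\log\operatorname{tr}_\omega\omega_\psi - A\,\frac{\psi}{f_\psi}
\end{equation*}
or, alternatively, $\log\operatorname{tr}_\omega\omega_\psi - A\psi + B\log f_\psi$. The quotient by $f_\psi$ is meant to keep the potential term bounded, using Corollary \ref{basic estimates} and $f_\psi\ge\varepsilon$.

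We compute $(\partial_\tau-\Delta_{\omega_\psi}-\tfrac{X}{2})Q$. The Chern–Lu inequality controls $(\partial_\tau - \Delta_{\omega_\psi}-\tfrac X2)\log\operatorname{tr}_\omega\omega_\psi$ from above by $C\operatorname{tr}_{\omega_\psi}\omega + C$. Here $C$ is a lower bound on the bisectional curvature of $g$, which is bounded because $(M,g)$ is AC. The drift and $-\bar g$ terms from the normalized flow contribute bounded quantities, thanks to the soliton structure of $\omega$.

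Since $(\partial_\tau-\Delta_{\omega_\psi}-\tfrac X2)\psi = \dot\psi - n + \operatorname{tr}_{\omega_\psi}\omega - \tfrac X2\cdot\psi$, the term $-A\psi$ yields $-A\operatorname{tr}_{\omega_\psi}\omega$. This absorbs the bad term. The leftover linear terms are bounded by $Cf$, and we handle them via the $f_\psi$-weighting: the identity $(\Delta_{\omega_\psi}+\tfrac X2)f_\psi = f_\psi$ (up to terms controlled by $|X|^2_{g_\psi}\le Cf_\psi+C$) gives a good sign for $\log f_\psi$ terms.

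The boundary values of $Q$ on $\{r^2=\lambda\}$ and at $\tau=0$ are bounded by Proposition \ref{boundary data} and the assumptions on $g_0$. Outside $\{r^2\le \lambda\}$ the trace is already bounded. So we run the maximum principle only on the compact region $\{r^2\le\lambda\}\times[0,\tau_0]$. At an interior maximum we get $\operatorname{tr}_{\omega_\psi}\omega\le C$, hence $\operatorname{tr}_\omega\omega_\psi\le C e^{F}(\operatorname{tr}_{\omega_\psi}\omega)^{n-1}\le C$, and therefore $Q\le C$. Since $\psi/f_\psi$ is bounded by Corollary \ref{basic estimates}, this gives $\operatorname{tr}_\omega\omega_\psi\le C$ everywhere.

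\textbf{Step 3: conclusion and main obstacle.} From $\operatorname{tr}_\omega\omega_\psi\le C$ and $F\ge -C$ we get $\operatorname{tr}_{\omega_\psi}\omega\le C$ as well, which gives $C^{-1}g\le g_\psi\le Cg$.

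The main obstacle is Step 2. The potential term must be chosen so that its time derivative and drift contributions, which grow like $f$, are genuinely controlled, and this is exactly where the comparability $f_\psi\sim f$ is needed. If the quotient $\psi/f_\psi$ produces unmanageable gradient cross terms, the fallback is to localize to $\{r^2\le\lambda\}$, where all weights are bounded, and to use the unweighted function $\log\operatorname{tr}_\omega\omega_\psi-A\psi$, whose values on the parabolic boundary are controlled.
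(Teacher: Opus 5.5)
Your argument is correct and is essentially the paper's route. The paper defers to the Yau-type estimate of \cite[Theorem 4.15]{ChenHallgrenLucas} and runs all of its maximum principles on $\{r^2\le\lambda\}\times[0,\log(T+1))$. You do the same: you apply the parabolic Aubin--Yau inequality for $\log\operatorname{tr}_\omega\omega_\psi$ (this is Yau's inequality rather than Chern--Lu's, since it uses a lower bisectional curvature bound of the fixed metric $g$), with the drift absorbed by $\mathcal{L}_{X/2}\omega=\Ric(\omega)+\omega$, add the term $-A\psi$, and apply the maximum principle on that region. There Proposition \ref{boundary data} and the $a$-closeness of $g_0$ control the parabolic boundary, while $f_\psi\le Df$ and Corollary \ref{basic estimates} bound $\psi$, $\dot\psi$, $\frac X2\cdot\psi$ and the volume ratio.

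The $1/f_\psi$ weighting is superfluous on that compact region. Also, the identity you need for $f_\psi$ is $(\partial_\tau-\Delta_{\omega_\psi}-\frac X2)f_\psi=-f_\psi$, not $(\Delta_{\omega_\psi}+\frac X2)f_\psi=f_\psi$. Your unweighted fallback $\log\operatorname{tr}_\omega\omega_\psi-A\psi$ is therefore the clean form of the proof.
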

\begin{proof}
    The proof is the same as in \cite[Theorem 4.15]{ChenHallgrenLucas}.
\end{proof}
The following proposition shows that if the initial data $g_0$ is sufficiently \emph{close} to $g$, then the two potentials are \emph{comparable}.

\begin{prop}
    There exists an $a>0$, such that if $g_0$ is $a-$close to $g$, then there exists a uniform constant $D>1$ such that along the flow, we have
    \begin{equation*}
        \frac{1}{D}f\le f_\psi\le Df
    \end{equation*}
\end{prop}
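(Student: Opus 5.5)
We need to show that, for $a$ small, the two potentials stay uniformly comparable: $D^{-1}f\le f_\psi\le Df$ along the normalized flow. Since $f\ge\varepsilon>0$ on $M$, it suffices to prove a two-sided bound
\begin{equation*}
\left|\tfrac{X}{2}\cdot\psi\right|\le \theta f+C,\qquad \theta<1
\end{equation*}
on all of $M\times[0,\log(1+T))$. Then $f_\psi=f+\frac{X}{2}\cdot\psi$ lies between $(1-\theta)f-C$ and $(1+\theta)f+C$, and together with $f_\psi\ge\varepsilon$ this yields the comparability. On the exterior region $\{r^2\ge\lambda\}$ and on the initial slice $M\times\{0\}$ the bound already holds by Proposition \ref{boundary data}, with $\theta=\max\{e^a-1,1-e^{-a}\}$, which is less than $1/2$ once $a$ is small. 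So the task is to propagate this bound into the compact region $\{r^2\le\lambda\}$, uniformly in $\tau$.

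\textbf{Step 1: the evolution equation.} Differentiate the normalized Monge--Amp\`ere flow along $\frac{X}{2}$. Write $u:=\frac{X}{2}\cdot\psi$. Because $\psi$ is $JX$-invariant and $X$ is real holomorphic, one expects, as in \cite[Section 4]{ChenHallgrenLucas}, an equation of the form
\begin{equation*}
\partial_\tau u=\Delta_{\omega_\psi}u+\tfrac{X}{2}\cdot u-u+\left(\tr_{\omega_\psi}\mathcal{L}_{\frac{X}{2}}\omega-\tr_\omega\mathcal{L}_{\frac{X}{2}}\omega\right).
\end{equation*}
Since $\mathcal{L}_{\frac{X}{2}}\omega=\Ric(\omega)+\omega$, the last term equals $\tr_{\omega_\psi}(\Ric+\omega)-(R_\omega+n)$. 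Its size is controlled by $\tr_{\omega_\psi}\omega$, which is exactly the quantity we do not yet control. The $C^2$-estimate is of no help here: it assumes the very comparability we are proving. So estimating $u$ directly looks circular.

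\textbf{Step 2: work with $f_\psi$ instead.} The plan is to avoid $u$ and apply the maximum principle to $f_\psi$ itself. It is the Hamiltonian of $X$ with respect to $g_\psi$, so it should satisfy a soliton-type identity along the normalized flow, namely
\begin{equation*}
(\partial_\tau-\Delta_{\omega_\psi}-\tfrac{X}{2})f_\psi=-f_\psi+\Delta_{\omega_\psi}f+\ldots .
\end{equation*}
Equivalently, $\Delta_{\omega_\psi}f_\psi=\tr_{\omega_\psi}\mathcal{L}_{\frac{X}{2}}\omega_\psi$, and combining the flow equation with $\mathcal{L}_{\frac{X}{2}}\omega_\psi$ should give a linear equation in $f_\psi$ with source term $\partial_\tau\psi$-type quantities. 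For those, Corollary \ref{basic estimates} already provides $|\dot\psi|\le(e^{2a}-1)f_\psi+C$ and $|X|^2_{g_\psi}\le Cf_\psi+C$.

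\textbf{Step 3: barriers.} Compare $f_\psi$ with the barriers $(1\pm 2\theta)f\pm C'$. The function $f$ satisfies $(\Delta_\omega+\frac{X}{2})f=f$. The difference $\Delta_{\omega_\psi}f-\Delta_\omega f$ should be absorbed using the source bounds above, which are small multiples of $f_\psi$ because $e^{2a}-1$ is small. At an interior maximum of $f_\psi-(1+2\theta)f$ in $\{r^2<\lambda\}$, the zeroth-order term $-f_\psi+f_\psi$-type cancellation leaves a negative multiple of the quantity being maximized. This is the damping term coming from the $-\psi$ in the normalized equation, and it gives a bound independent of $\tau$. On the parabolic boundary, i.e.\ $\{r^2=\lambda\}$ and $\tau=0$, the barrier holds by Proposition \ref{boundary data} after enlarging $C'$. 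The lower barrier is handled the same way, using $\inf f_\psi\ge\varepsilon$.

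\textbf{Main obstacle.} The hard part is Step 3: controlling the cross terms, which involve $\tr_{\omega_\psi}\omega$, without a priori $C^2$ control. What I would try first is a Kazdan--Warner/Yau type combination such as $f_\psi-(1+2\theta)f+\eta\dot\psi$. The aim is for the bad trace terms to cancel against those in the evolution of $\dot\psi$, which satisfies $(\partial_\tau-\Delta_{\omega_\psi}-\frac{X}{2})\dot\psi=-\dot\psi$ up to trace terms. The smallness of $a$, entering through $e^{2a}-1$, is what closes the argument. If that fails, the fallback is a continuity/bootstrap argument in $\tau$. Assume $f/(2D)\le f_\psi\le 2Df$ on $[0,\tau_0]$. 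Apply the $C^2$-estimate to get $C^{-1}g\le g_\psi\le Cg$ there. With the trace terms then bounded by $C(R_\omega+n)\le C$, the maximum principle for $u$ from Step 1 improves the bound to $D$, provided $a$ is small relative to the constants.
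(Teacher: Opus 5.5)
\textbf{There is a genuine gap at Step 3, and neither of your proposed fixes closes it.}

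\emph{The $\eta\dot\psi$ correction cannot cancel anything.} Differentiating the normalized Monge--Amp\`ere equation in $\tau$ gives exactly $(\partial_\tau-\Delta_{\omega_\psi}-\frac X2)\dot\psi=-\dot\psi$, with no trace term at all. So the term $(1+2\theta)\tr_{\omega_\psi}(\Ric(\omega)+\omega)$, which $-\Delta_{\omega_\psi}f$ contributes to the evolution of your barrier, survives untouched.

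\emph{The bootstrap does not close either.} Assume $f/(2D)\le f_\psi\le 2Df$. The $C^2$-estimate then gives a constant depending on $D$ and on $g_0$ over the compact set $\{r^2\le\lambda\}$. But $a$-closeness imposes nothing there: it only constrains $g_0$ near infinity. Hence the source term in your equation for $u$ is bounded only by some $K(D)$, which is neither small as $a\to0$ nor controlled in terms of $D$. Recovering the constant $D$ from $2D$ would require something like $K(2D)\le(D-1)\varepsilon$, and nothing provides that.

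\textbf{The missing idea, which is what the paper uses, is to cancel the trace term with $\psi$ itself.} One has $\Delta_{\omega_\psi}\psi=n-\tr_{\omega_\psi}\omega$, and $-A_-\omega\le i\partial\bar\partial f\le A_+\omega$. Therefore $(\partial_\tau-\Delta_{\omega_\psi}-\frac X2)(f-A_-\psi)\le A_-n+\dots$ and $(\partial_\tau-\Delta_{\omega_\psi}-\frac X2)(f+A_+\psi)\ge -A_+n+\dots$, and both are free of $\tr_{\omega_\psi}\omega$.

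The leftover $\dot\psi$ and $\psi$ terms are bounded by multiples of $f_\psi$ via Corollary~\ref{basic estimates}. They are absorbed by subtracting $Bf_\psi$ (respectively $\alpha f_\psi$). Smallness of $a$, through $A_-(e^a-1)<1$ and $A_+(e^{2a}-1)+A_+^2(e^a-1)<A_+$, supplies the needed signs before applying the maximum principle on $\{r^2\le\lambda\}$.

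\textbf{Your Step 2 guess is also wrong, in a way that matters.} $f_\psi$ satisfies the source-free identity $(\partial_\tau-\Delta_{\omega_\psi}-\frac X2)f_\psi=-f_\psi$. With it you need no $f$-barrier inside the compact set at all:
\begin{itemize}
\item At an interior spatial maximum, $\partial_\tau f_\psi\le -f_\psi<0$.
\item Hence $f_\psi$ on $\{r^2\le\lambda\}$ is bounded above by its values on $\{r^2=\lambda\}$ and at $\tau=0$, where $f_\psi\le e^af+C$ by Proposition~\ref{boundary data}.
\item Meanwhile $f_\psi\ge\inf_M f\ge\varepsilon$.
\item Since $\varepsilon\le f\le C$ on that set, this, together with the exterior bounds, already gives comparability.
\end{itemize}
Your choice of barriers linear in $f$ inside the compact region is precisely what reintroduced the uncontrolled term $\Delta_{\omega_\psi}f$.
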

\begin{proof}
    Recall that $f_\psi$ satisfies that following evolution equation (\cite[Lemma 4.4]{ChenHallgrenLucas}):
    \begin{equation*}
       \left( \frac{\partial}{\partial t}-\Delta_{\omega_\psi,X}\right)f_\psi=-f_\psi.
    \end{equation*}
    Here $\Delta_{\omega_\psi,X}:=\Delta_{\omega_\psi}+\frac{X}{2}$.
    
    Let $A_{\pm}\ge0$ be defined as follows:
    \begin{equation*}
       \begin{split}
            &A_+:=\inf\{K\ge0\ |\  (\nabla^g)^2f-Kg\le0\};\\
            &A_-:=\inf\{K\ge0\ | \ (\nabla^g)^2f+Kg\ge0\}.
       \end{split}
    \end{equation*}
    Since $(M,g,X)$ is an asymptotically conical Ricci expander, the constants $A_+, A_-$ are well defined and nonnegative. On $M$, we have $-A_-g\le(\nabla^g)^2f\le A_+g$. By soliton equation $\Ric(g)+g=(\nabla^g)^2f$ and the fact that $R_\omega+n$ is strictly positive, we conclude that $A_+>0$.

    Take $a>0$ such that $A_+(e^{2a}-1)+A_+^2(e^a-1)<A_+$ and $A_-(e^a-1)<1$. We only need to show on $\{r^2\le\lambda\}\times[0,\log(T+1))$, 
    \begin{equation*}
        \frac{1}{D}f\le f_\psi\le Df,
    \end{equation*}
    holds for some uniform constant $D>1$.

    First, we compute
    \begin{equation*}
       \begin{split}
            \left(\frac{\partial}{\partial\tau}-\Delta_{\omega_\psi,X}\right)(f-A_-\psi)&=-\Delta_{\omega_\psi}f-\frac{X}{2}\cdot f-A_-\dot\psi+A_-\Delta_{\omega_\psi}\psi+A_-\frac{X}{2}\cdot\psi\\
            &=-\Delta_{\omega_\psi}f-|\partial f|_g^2-A_-\dot\psi-A_-(\tr_{\omega_\psi}\omega-n)+A_-f_\psi-A_-f\\
            &\le C_0-\Delta_{\omega_\psi}f-A_-\dot\psi-A_-(\tr_{\omega_\psi}\omega-n)A_-f_\psi-(A_-+1)f.
       \end{split}
    \end{equation*}
   Here $C_0\ge 0$ such that $|\partial f|_g^2\ge f-C_0$. Since $-A_-g\le(\nabla^g)^2f$, it follows that
    \begin{equation*}
        \begin{split}
            \left(\frac{\partial}{\partial\tau}-\Delta_{\omega_\psi,X}\right)(f-A_-\psi)&\le C_0+A_-\tr_{\omega_\psi}\omega-A_-\dot\psi-A_-(\tr_{\omega_\psi}\omega-n)+A_-f_\psi-(A_-+1)f\\
            &=A_-n+C_0-A_-\dot\psi+A_-f_\psi-(A_-+1)f\\
            &=A_-n+C_0-A_-\dot\psi+A_-f_\psi-(A_-+1)(f-A_-\psi)-A_-(A_-+1)\psi.
        \end{split}
    \end{equation*}
    The Corollary \ref{basic estimates} shows that $\dot\psi\ge -(e^{2a}-1)f_\psi-C$ and $\psi\ge -(e^a-1)f_\psi-C$, then we have
    \begin{equation*}
        \begin{split}
            \left(\frac{\partial}{\partial\tau}-\Delta_{\omega_\psi,X}\right)(f-A_-\psi)&\le 
            C_1+C_2f_\psi-(A_-+1)(f-A_-\psi).
        \end{split}
    \end{equation*}
    Here $C_1=C_0+A_-n+A_-C+A_-^2C,\ C_2=(A_-+A_-(e^{2a}-1)+A_-(A_-+1)(e^a-1))$. Now we consider $u=f-A_-\psi-Bf_\psi$ with $B>0$ to be determined. We compute
    \begin{equation*}
        \begin{split}
            \left(\frac{\partial}{\partial\tau}-\Delta_{\omega_\psi,X}\right)u&\le Bf_\psi +C_1+C_2f_\psi-(A_-+1)(f-A_-\psi)\\
            &=Bf_\psi +C_1+C_2f_\psi-(A_-+1)(f-A_-\psi-Bf_\psi)-(A_-+1)Bf_\psi\\
            &=C_1+(C_2-A_-B)f_\psi-(A_-+1)u.
        \end{split}
    \end{equation*}
    One can take $B>>1$ such that $C_2-A_-B<0$ and on $\{r^2=\lambda\}\bigcup M\times\{0\}$, $u=f-A_-\psi-Bf_\psi\le C_3$ holds for some uniform constant $C_3$. Then it follows that $ \left(\frac{\partial}{\partial\tau}-\Delta_{\omega_\psi,X}\right)u\le C_1-(A_-+1)u$, by maximum principle, we have on $\{r^2\le\lambda\}\times[0,\log(t+1))$,
    \begin{equation*}
        f-A_-\psi-Bf_\psi=u\le C_4,
    \end{equation*}
    holds for some uniform constant $C_4>0$. Since $A_-\psi\le A_-(e^a-1)f+A_-C$ and we have taken $a>0$ such that $A_-(e^a-1)<1$, therefore
    \begin{equation*}
        f\le C_4+A_-(e^a-1)f+A_-C+Bf_\psi.
    \end{equation*}
    It follows that
    \begin{equation*}
f\le \frac{1}{1-A_-(e^a-1)}\left(C_4+A_-C+Bf_\psi\right)\le \frac{1}{1-A_-(e^a-1)}\left(\frac{C_4+A_-C}{\varepsilon}+B\right)f_\psi.
    \end{equation*}
Here $\varepsilon=\inf_M f>0$.

    Second, we compute
    \begin{equation*}
         \begin{split}
             \left(\frac{\partial}{\partial\tau}-\Delta_{\omega_\psi}+\frac{X}{2}\right)(f+A_+\psi)&=-\Delta_{\omega_\psi}f+|\partial f|_g^2+A_+\dot\psi-A_+\Delta_{\omega_\psi}\psi+A_+\frac{X}{2}\cdot\psi\\
             &=-\Delta_{\omega_\psi}f+|\partial f|_g^2+A_+\dot\psi-A_+\Delta_{\omega_\psi}\psi+A_+f_\psi-A_+f\\
             &\ge -\Delta_{\omega_\psi}f+A_+\dot\psi-A_+\Delta_{\omega_\psi}\psi+A_+f_\psi-A_+f.
         \end{split}
    \end{equation*}
    Since $(\nabla^g)^2f\le A_+g$, then we have
    \begin{equation*}
        \begin{split}
             \left(\frac{\partial}{\partial\tau}-\Delta_{\omega_\psi}+\frac{X}{2}\right)(f+A_+\psi)&\ge -A_+\tr_{\omega_\psi}\omega+A_+\dot\psi-A_+\Delta_{\omega_\psi}\psi+A_+f_\psi-A_+f\\
             &=-A_+n+A_+\dot\psi+A_+f_\psi-A_+f\\
             &=-A_+n+A_+\dot\psi+A_+f_\psi-A_+(f+A_+\psi)+A_+^2\psi.
         \end{split}
    \end{equation*}
     The Corollary \ref{basic estimates} shows that $\dot\psi\ge -(e^{2a}-1)f_\psi-C$ and $\psi\ge -(e^a-1)f_\psi-C$, then we have
     \begin{equation*}
         \begin{split}
            \left(\frac{\partial}{\partial\tau}-\Delta_{\omega_\psi}+\frac{X}{2}\right)(f+A_+\psi)&\ge   -C_5+\sigma f_\psi-A_+(f+A_+\psi).
         \end{split}
     \end{equation*}
     Here $C_5=A_+n+A_+^2C+A_+C$, $\sigma=\left(A_+-A_+(e^{2a}-1)-A_+^2(e^a-1)\right)$. By the definition of $a$, constant $\sigma>0$. Now we consider $v=f+A_+\psi-\alpha f_\psi$ with $\alpha>0$ to be determined. We compute
     \begin{equation*}
         \begin{split}
            \left(\frac{\partial}{\partial\tau}-\Delta_{\omega_\psi}+\frac{X}{2}\right)v&\ge   -C_5+\sigma f_\psi-A_+(f+A_+\psi)-\alpha(X\cdot f_\psi-f_\psi)\\
            &\ge -C_5+\sigma f_\psi-A_+(f+A_+\psi)-\alpha |X|_{g_\psi}^2\\
            &=-C_5+\sigma f_\psi-A_+(f+A_+\psi-\alpha f_\psi)-A_+\alpha f_\psi-\alpha |X|_{g_\psi}^2.
         \end{split}
     \end{equation*}
     Corollary \ref{basic estimates} tells us that $|X|_{g_\psi}^2\le Cf_\psi+C$, it follows that
     \begin{equation*}
         \begin{split}
            \left(\frac{\partial}{\partial\tau}-\Delta_{\omega_\psi}+\frac{X}{2}\right)v\ge-C_5-\alpha C+(\sigma -\alpha C-A_+\alpha)f_\psi-A_+v.
         \end{split}
     \end{equation*}
     One can take $\alpha<<1$ such that $\sigma -\alpha C-A_+\alpha\ge 0$ and $(1-A_+(e^a-1))-\alpha\ge0$, therefore, on $\{r^2=\lambda\}\bigcup M\times\{0\}$, 
     \begin{equation*}
         v=f+A_+\psi-\alpha f_\psi\ge f+A_+(e^{-a}-1)f-A_+C-\alpha f_\psi\ge -A_+C+ (1-A_+(e^a-1))f-\alpha f_\psi>-C_6
     \end{equation*}holds for some uniform constant $C_6>0$. Then it follows that $ \left(\frac{\partial}{\partial\tau}-\Delta_{\omega_\psi,X}\right)v\ge -C_5-\alpha C-A_+v$, by maximum principle, we have on $\{r^2\le\lambda\}\times[0,\log(t+1))$,
     \begin{equation*}
         f+A_+\psi-\alpha f_\psi=v\ge -C_7,
     \end{equation*}
     holds for some uniform constant $C_7>0$. Then we have
     \begin{equation*}
         f_\psi\le\frac{1}{\alpha}\left(f+A_+\psi+C_7\right)\le \frac{1}{\alpha}\left(f+A(e^a-1)f+AC+C_7\right)\le C_8f.
     \end{equation*}
     Here $C_8=\frac{1}{\alpha}\left(1+A_+(e^a-1)+\frac{C_7+A_+C}{\varepsilon}\right)$. Take $D=\max\{C_8,\frac{1}{1-A_-(e^a-1)}\left(\frac{C_4+A_-C}{\varepsilon}+B\right)\}$, we get
     \begin{equation*}
         \frac{1}{D}f\le f_\psi\le Df,
     \end{equation*}
     holds on $\{r^2\le\lambda\}\times[0,\log(T+1))$.
\end{proof}
The high order estimates follow naturally as in \cite[Appendix C]{ChenHallgrenLucas}
\begin{prop}\label{final results of estimates}
     Along the flow, for all $k\in\N_0$, there exists a uniform constant $C_k>0$ such that on $M\times [0,\log(1+T))$,
    \begin{equation*}
       f^{\frac{k}{2}}|(\nabla^{g})^k g_\psi|_{g}+  f^{1+\frac{k}{2}}|(\nabla^{g_\psi})^k\Rm(g_\psi)|_{g_\psi}\le C_k.
    \end{equation*}
\end{prop}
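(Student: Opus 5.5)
The plan is to prove Proposition \ref{final results of estimates} in three steps. First I bootstrap from the uniform $C^0$ bound on the metric ($C^{-1}g\le g_\psi\le Cg$, from the $C^2$-estimate together with the comparability $D^{-1}f\le f_\psi\le Df$). Second I get curvature and derivative bounds locally at a fixed scale. Third I convert the fixed-scale bounds into the weighted bounds $f^{k/2}$ and $f^{1+k/2}$ using the self-similar structure, i.e. by passing back to the unnormalized flow $\tilde g(t)$ and rescaling parabolically.

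\emph{Local fixed-scale estimates.} On the compact region $\{r^2\le\lambda\}$ we have $f\le C$, so the weights are harmless and unweighted bounds suffice. The normalized Monge--Amp\`ere flow for $\psi$ is uniformly parabolic there, since $g_\psi$ is uniformly equivalent to $g$. The quantities $\psi$, $\dot\psi$ and $X\cdot\psi$ are bounded by Corollary \ref{basic estimates}. I would then run the standard local Calabi $C^3$ estimate, e.g. Sherman--Weinkove local Calabi-type bounds for K\"ahler--Ricci flow, which give $|\nabla^g g_\psi|_g\le C$ on a slightly smaller region, uniformly for $\tau\ge 1$. For $\tau\in[0,1]$ the same bound follows from the initial bounds in condition (2) of Definition \ref{a closeness}. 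Higher derivatives then come from Schauder theory applied to the differentiated parabolic equation, or equivalently from Shi-type local derivative estimates once $|\Rm(g_\psi)|$ is bounded.

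\emph{Region $\{r^2\ge\lambda\}$.} Here I use the unnormalized picture. By Proposition \ref{perelman+shi}, for $r(x)^2\ge\lambda(1+t)$ we have $r^{2+k}|(\nabla^{\tilde g})^k\Rm(\tilde g(t))|\le C_k$, and the corollary after it gives $r^k|(\nabla^{g(t+1)})^k(\tilde g(t)-g(t+1))|\le C'_k$. Pulling back by $e^{-\tau}\Phi^*_{e^{-\tau}}$ with $t=e^\tau-1$ is a scaling by $e^{-\tau}$, and $r$ transforms with the matching weight. The region $r^2\ge\lambda(1+t)$ therefore corresponds to $\{r^2\ge\lambda\}$ in the normalized picture, and the weights $r^k$ become $f^{k/2}$ by Lemma \ref{proper function} and Corollary \ref{proper function with t}. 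Combining this with $|(\nabla^g)^k g|\lesssim f^{-k/2}$ and the AC decay of $g$ from Definition \ref{ACKR expander} yields
\[
f^{k/2}|(\nabla^g)^k g_\psi|_g\le C_k,\qquad f^{1+k/2}|(\nabla^{g_\psi})^k\Rm(g_\psi)|_{g_\psi}\le C_k
\]
on $\{r^2\ge\lambda\}$. The curvature bound expresses $\Rm(g_\psi)$ through up to two derivatives of $g_\psi$ and the curvature of $g$, with the weights adding correctly.

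\emph{Main obstacle.} The hard part is making the estimates uniform in $\tau\in[0,\log(1+T))$ near the interface $\{r^2=\lambda\}$ and near $\tau=0$. The local interior estimates need a parabolic neighbourhood of fixed size. I would handle this by enlarging $\lambda$: apply the interior estimates on $\{r^2\le 2\lambda\}$, where the pseudolocality bounds already provide the needed control, and use the overlap $\{\lambda\le r^2\le 2\lambda\}$. Near $\tau=0$ I would rely on the initial derivative bounds together with Shi's estimates with initial data. I expect the $C^3$ (first-derivative) estimate to be the genuinely delicate step, in the spirit of \cite[Appendix C]{ChenHallgrenLucas}; once it holds, all higher orders follow routinely by induction.
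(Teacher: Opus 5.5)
Your proposal is correct and matches the route the paper intends; the paper itself only defers to \cite[Appendix C]{ChenHallgrenLucas}. The far region $\{r^2\ge\lambda\}$ is handled by Proposition \ref{perelman+shi} and its corollary, transported to self-similar variables. The compact region and times near $\tau=0$ are handled by local Calabi/Shi-type and Schauder estimates, fed by the global bound $C^{-1}g\le g_\psi\le Cg$ together with Shi's short-time estimates.

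One cosmetic point: the quantity $|(\nabla^g)^k g|_g$ that you invoke vanishes identically for $k\ge 1$. The far-region bound on $(\nabla^g)^k g_\psi$ therefore comes directly from the corollary's bound on $(\nabla^{g})^k(g_\psi-g)$.
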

\begin{proof}[Proof of Theorem \ref{main estimate thm}]
    The proof of Theorem \ref{main estimate thm} comes directly from Proposition \ref{final results of estimates} and Hamilton's criterion on the maximum existence time.
\end{proof}
\section{Long-time behaviors of flows and asymptotic behaviors of initial values}\label{Long-time behaviors of flows and asymptotic behaviors of initial values}
In this section, we relate the $\varkappa-$limit set and $\kappa-$limit set of a K\"ahler metric $g_0$ that is $a-$close to $g$ as in Theorem \ref{main estimate thm}. 
\begin{proof}[Proof of Theorem \ref{relation between two limit sets}]
Let $\tilde g(\cdot)$ be the unique Shi's solution to K\"ahler-Ricci flow with initial data $g_0$.
    \begin{claim}\label{claim 1}
          If $g_0$ is $a-$close to $g$ as in Theorem \ref{main estimate thm}, then $\varkappa(g_0)$ and $\kappa(g_0)$ are nonempty.
    \end{claim}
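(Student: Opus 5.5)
The plan is to deduce both nonemptiness statements from uniform estimates that are invariant under the dilations, and then apply Arzelà--Ascoli with a diagonal argument.

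\emph{The flow.} Consider $h_\lambda(t):=\mathcal D(\lambda)\tilde g(t)=\lambda\Phi_\lambda^*\tilde g(t/\lambda)$ for $\lambda\in(0,1]$. The self-similar solution satisfies $\lambda\Phi_\lambda^*g(s)=g(\lambda s)$. Applying this to the two-sided bound of Theorem \ref{main estimate thm}(b)(i) gives
\[
C^{-1}g(t+\lambda)\le h_\lambda(t)\le Cg(t+\lambda).
\]
Since $\Phi_\lambda$ fixes $p$, the weighted derivative bounds in (b)(ii) transform in the same way and become
\[
\bigl(\sqrt{t+\lambda}+d_{g(t+\lambda)}(p,\cdot)\bigr)^k|(\nabla^{g(t+\lambda)})^k h_\lambda(t)|_{g(t+\lambda)}\le C_k,
\]
with the same constants $C_k$. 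On a compact set $K\times[t_1,t_2]\subset M\times(0,\infty)$ the metrics $g(t+\lambda)$, $\lambda\in(0,1]$, are uniformly equivalent to $g$ with uniform derivative bounds, because $g(s)$ depends smoothly on $s\in[t_1,t_1+1+t_2]$. Hence the $h_\lambda$ are uniformly bounded in every $C^k$ there and uniformly positive definite. Take any sequence $\lambda_n\to0^+$. Arzelà--Ascoli on an exhaustion of $M\times(0,\infty)$, followed by a diagonal subsequence, gives local smooth convergence to a limit $\tilde h(t)$. The limit is a positive definite Kähler metric, since the complex structure is preserved by $\Phi_\lambda$ because $X$ is real holomorphic. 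It solves the Kähler--Ricci flow, because the equation passes to local smooth limits. Therefore $\varkappa(g_0)\neq\emptyset$.

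\emph{The initial metric.} Here I argue on $M\setminus E\cong\mathcal C\setminus\{o\}$ and set $\mathcal D'(\lambda)g_0=\lambda\Phi_\lambda^*g_0$. First, $\mathcal D'(\lambda)g=g(\lambda)$, and this converges to $\pi^*g_{\mathcal C}$ locally smoothly on $M\setminus E$ by Theorem \ref{theorem of conlonderuellesun}. Next, the $a$-closeness of Definition \ref{a closeness} together with continuity of $g_0$ gives $C^{-1}g\le g_0\le Cg$ on all of $M$. Property (2) of that definition gives $|(\nabla^g)^kg_0|_g\le C_kd_g(p,\cdot)^{-k}$, which is comparable to $C_k r^{-k}$ for large $r$. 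On a compact set $K\Subset\mathcal C\setminus\{o\}$, the points $\Phi_\lambda(x)$ with $x\in K$ have $r\sim r(x)/\sqrt\lambda\to\infty$. The pullback by $\Phi_\lambda$ combined with the scaling $\lambda$ turns the $r^{-k}$-weighted bounds for $g_0$ against $g$ into unweighted bounds for $\mathcal D'(\lambda)g_0$ against $\mathcal D'(\lambda)g=g(\lambda)$. Since $g(\lambda)$ is uniformly close to $g_{\mathcal C}$ in $C^k(K)$, this yields uniform $C^k(K)$ bounds and uniform equivalence to $g_{\mathcal C}$. A diagonal subsequence then converges locally smoothly on $M\setminus E$ to a smooth metric $\tilde h_0$, so $\kappa(g_0)\neq\emptyset$.

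\emph{Main obstacle.} The delicate point is the scaling bookkeeping. One has to check that $\lambda\Phi_\lambda^*$ converts $d_g(p,\cdot)^k|(\nabla^g)^k g_0|_g$ exactly into $d_{g(\lambda)}(p,\cdot)^k|(\nabla^{g(\lambda)})^k\mathcal D'(\lambda)g_0|_{g(\lambda)}$. One also has to confirm that $d_{g(\lambda)}(p,\cdot)$ stays bounded below on $K$ uniformly in $\lambda$, which follows from Corollary \ref{proper function with t} since $\lambda f(\Phi_\lambda(x))\ge r(x)^2/2$. Once these identities are verified, the rest is standard compactness.
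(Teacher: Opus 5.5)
Your proposal is correct and follows essentially the same route as the paper. You transport the estimates of Theorem~\ref{main estimate thm}(b) and of Definition~\ref{a closeness} through $\mathcal{D}(\lambda)$ and $\mathcal{D}'(\lambda)$, note that they are scale-invariant (becoming bounds against $g(t+\lambda)$ and $g(\lambda)$), and extract locally smoothly convergent subsequences by Arzel\`a--Ascoli. Your bookkeeping, e.g.\ $r\circ\Phi_\lambda=r/\sqrt{\lambda}$ and $g(\lambda)\to\pi^*g_{\mathcal C}$, is in fact more explicit than the paper's. The one point both of you leave implicit is that space-time compactness for the flow also needs time-derivative bounds, which follow from $\partial_t h_\lambda=-\Ric(h_\lambda)$ together with the spatial bounds.
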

    \begin{proof}
       We first show that $\varkappa(g_0)$ is not empty. Recall that $g_0$ is $a-$close to $g$, then the following holds for $p\in E$:
       \begin{enumerate}
           \item There exists a uniform constant $C>1$ such that along the flow, we have
        \begin{equation*}
           C^{-1}g(t+1)\le \tilde g(t)\le Cg(t+1).
        \end{equation*}
        \item For each $k\in \N_0$, there exists a $C_k>0$ such that along the flow, we have
        \begin{equation*}
           \left(\sqrt{t+1} +d_{g(t+1)}(p,\cdot)\right)^k|(\nabla^{g(t+1)})^k\tilde g(t)|_{g(t+1)}\le C_k.
        \end{equation*}
       \end{enumerate}
       Therefore, for all $\lambda>0$, we have
       \begin{enumerate}
             \item There exists a uniform constant $C>1$ such that along the flow, we have  \begin{equation*}
           C^{-1}g(t+\lambda)\le \mathcal{D}(\lambda)\tilde g(t)\le Cg(t+\lambda).
        \end{equation*}
        \item For each $k\in \N_0$, there exists a $C_k>0$ such that along the flow, we have
        \begin{equation*}
             \left(\sqrt{t+\lambda} +d_{g(t+\lambda)}(p,\cdot)\right)^k|(\nabla^{g(t+\lambda)})^k\mathcal{D}(\lambda)\tilde g(t)|_{g(t+\lambda)}\le C_k.
        \end{equation*}
       \end{enumerate}
       Then by Arzel\`a-Ascoli lemma, for any sequence $\lambda_n\to 0^+$, we can find a subsequence of it (which we still denote by $\{\lambda_n\}_{n\in\N_0}$) such that $ \mathcal{D}(\lambda_n)\tilde g(t)_{t\in (0,\infty)}$ converges to a K\"ahler-Ricci flow $\tilde h(t)_{t\in (0,\infty)}$ locally smoothly on $M\times (0,\infty)$. Then it follows that $\tilde h(t)_{t\in (0,\infty)}\in\varkappa(g_0)$.

      Now we show that $\kappa(g_0)$ is not empty. Recall that from above we also have:
       \begin{enumerate}
           \item There exists a uniform constant $C>1$ such that along the flow, we have
        \begin{equation*}
           C^{-1}g\le g_0\le Cg.
        \end{equation*}
        \item For each $k\in \N_0$, there exists a $C_k>0$ such that along the flow, we have
        \begin{equation*}
            r^k|(\nabla^{g})^kg_0|_{g}\le C_k.
        \end{equation*}
       \end{enumerate}
       Thus, for all $\lambda>0$, we have
       \begin{enumerate}
           \item There exists a uniform constant $C>1$ such that along the flow, we have
        \begin{equation*}
           C^{-1}g(\lambda)\le\mathcal{D}'(\lambda) g_0\le Cg(\lambda).
        \end{equation*}
        \item For each $k\in \N_0$, there exists a $C_k>0$ such that along the flow, we have
        \begin{equation*}
            r^k|(\nabla^{g(\lambda)})^k\mathcal{D}'(\lambda) g_0|_{g(\lambda)}\le C_k.
        \end{equation*}
       \end{enumerate}
        Then by Arzel\`a-Ascoli lemma, for any sequence $\lambda_n\to 0^+$, we can find a subsequence of it (which we still denote by $\{\lambda_n\}_{n\in\N_0}$) such that $ \mathcal{D}'(\lambda_n)g_0$ converges to a metric $\tilde h_0$ locally smoothly on each annulus on $M\setminus E$. Then it follows that $\tilde h_0\in\kappa(g_0)$ as $\lambda_n$ tends to $0$.
    \end{proof}
    \begin{claim}\label{claim 2}
        Let $\tilde h(t)=\lim_{n\to \infty}\mathcal{D}(\lambda_n)\tilde g(t)$ and $\tilde h_0=\lim_{n\to \infty}\mathcal{D}'(\lambda_n)g_0$ for the same sequence $\{\lambda_n>0\}_{n\to\infty}$ that converges to 0. Then $\tilde h(t)$ converges to $\tilde h_0$ on $M\setminus E$ locally smoothly.
    \end{claim}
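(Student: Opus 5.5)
The plan is to prove a uniform-in-$n$ estimate showing that, on a fixed compact set $K\subset M\setminus E$, the rescaled flow $h_n(t):=\mathcal{D}(\lambda_n)\tilde g(t)$ stays close to its initial datum $\mathcal{D}'(\lambda_n)g_0$ for small $t$. Then we pass to the limit in $n$ and afterwards let $t\to0^+$.

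First observe that $h_n(t)$ is itself a Kähler--Ricci flow on $[0,\infty)$ with $h_n(0)=\lambda_n\Phi_{\lambda_n}^*g_0=\mathcal{D}'(\lambda_n)g_0$. This holds because $\Phi_{\lambda}$ is a fixed diffeomorphism for fixed $\lambda$, and $\tilde g$ is defined at $t=0$.

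Fix $K\subset M\setminus E$ compact, so that $r\ge r_0>0$ on $K$. Rescaling Proposition \ref{perelman+shi} (or equivalently part (ii) of Theorem \ref{main estimate thm}) by $\mathcal{D}(\lambda_n)$, we get, on the region $\{r^2\ge\lambda(\lambda_n+\lambda_n\cdot t/\lambda_n)\}=\{r^2\ge\lambda(\lambda_n+t)\}$, the bound
\begin{equation*}
r^{2+k}|(\nabla^{h_n(t)})^k\Rm(h_n(t))|_{h_n(t)}\le C_k .
\end{equation*}
This uses that $d\pi(X)=r\partial_r$, so $r\circ\Phi_{\lambda}=\lambda^{-1/2}r$ and the curvature bound scales correctly. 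Hence for $n$ large and $t\le t_0:=r_0^2/(2\lambda)$, all derivatives of curvature of $h_n(t)$ are bounded on $K$ uniformly in $n$, with bounds of order $r_0^{-2-k}$.

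Next we integrate the flow equation $\partial_t h_n=-\Ric(h_n)$. The zeroth-order bound gives $e^{-Ct/r_0^2}h_n(0)\le h_n(t)\le e^{Ct/r_0^2}h_n(0)$ on $K$. For higher derivatives, we measure against the fixed background $g_{\mathcal C}$, to which $\mathcal{D}'(\lambda_n)g_0$ is uniformly equivalent with uniformly bounded derivatives on $K$ (the argument of Claim \ref{claim 1}). Then $|\partial_t(\nabla^{g_{\mathcal C}})^k h_n|_{g_{\mathcal C}}\le C_{k,K}$, and therefore
\begin{equation*}
|(\nabla^{g_{\mathcal C}})^k(h_n(t)-\mathcal{D}'(\lambda_n)g_0)|_{g_{\mathcal C}}\le C_{k,K}\,t\quad\text{on }K,\ t\le t_0,
\end{equation*}
uniformly in $n$. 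Here we convert $\nabla^{h_n}$-derivatives of $\Ric$ into $\nabla^{g_{\mathcal C}}$-derivatives using the uniform equivalence and the inductive bounds on the Christoffel differences.

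Letting $n\to\infty$ with $t>0$ fixed, the left side converges to $|(\nabla^{g_{\mathcal C}})^k(\tilde h(t)-\tilde h_0)|_{g_{\mathcal C}}$, by local smooth convergence of $h_n(t)\to\tilde h(t)$ and of $\mathcal{D}'(\lambda_n)g_0\to\tilde h_0$ along the same sequence. Hence this quantity is at most $C_{k,K}t\to0$ as $t\to0^+$, which is the claim.

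The main obstacle is step two: making the pseudolocality region uniform under rescaling. We must check that the threshold $r^2\ge\lambda(1+t)$ of Proposition \ref{perelman+shi} becomes $r^2\ge\lambda(\lambda_n+t)$, which is harmless for $t>0$ small and $n$ large. We must also control the derivatives in the time integration uniformly in $n$. If the direct Shi-type bookkeeping is awkward, the first thing to try is to use the uniform estimate (ii) of Theorem \ref{main estimate thm} rescaled. That estimate directly gives $(\sqrt{t+\lambda_n}+d)^k$-weighted bounds on $h_n(t)$, which on $K$ are uniform in $n$ and $t\in[0,1]$, and this suffices for the integration argument.
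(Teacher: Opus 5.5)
Your proposal is correct and follows essentially the same route as the paper. A uniform-in-$n$ bound on the curvature of $\mathcal{D}(\lambda_n)\tilde g(t)$ over the compact set $K\subset M\setminus E$ gives $|\mathcal{D}(\lambda_n)\tilde g(t)-\mathcal{D}'(\lambda_n)g_0|_{C^k}\le C t$ for small $t$; one then passes to the limit in $n$ along the common sequence and finally lets $t\to0^+$. The paper states that uniform curvature bound on $K$ without derivation, whereas your rescaling of the pseudolocality region to $\{r^2\ge\lambda(\lambda_n+t)\}$ makes it explicit.
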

    \begin{proof}
        We first fix an annulus $\{r_0^2\le r^2\le R_0^2\}$ on $\mathcal{C}$ and we define $K:=\pi^{-1}\{r_0^2\le r^2\le R_0^2\}$ as a compact set on $M\setminus E$. We show that, as $t$ tends to 0, $\tilde h(t)$ converges to $\tilde h_0$ smoothly on $K$.

        For all $k\in\N_0,\varepsilon>0$ that are fixed, for all $t>0$, there exists an $N=N(\varepsilon,k,t)>0$ such that for all $n>N$, on $K$, we have
        \begin{equation*}
            |\tilde h(t)-\mathcal{D}(\lambda_n)\tilde g(t)|_{C^k(\tilde h_0)}\le\varepsilon.
        \end{equation*}
        On $K$, the curvature of $\mathcal{D}(\lambda_n)\tilde g(t)$ is uniformly bounded, that is, there exists a constant $C(K)>0$ such that on $K$, for all $\lambda_n>0, t\ge 0$ we have
        \begin{equation*}
            \sum _{i=0}^k|(\nabla^{\mathcal{D}(\lambda_n)\tilde g(t)})^i\Rm(\mathcal{D}(\lambda_n)\tilde g(t))|_{\mathcal{D}(\lambda_n)\tilde g(t)}\le C(K).
        \end{equation*}
        In particular, on $K$, there is a constant $C'>0$ such that for all $t\in[0,1]$, for all $n\in\N_0$
        \begin{equation*}
            |\mathcal{D}(\lambda_n)\tilde g(t)-\mathcal{D}'(\lambda_n)g_0|_{C^k(\tilde h_0)}\le C't.
        \end{equation*}
        Moreover, there exists $N'=N'(k,\varepsilon)>0$ such that for all $n>N'$, we have
        \begin{equation*}
            |\mathcal{D}'(\lambda_n)g_0-\tilde h_0|_{C^k(\tilde h_0)}\le\varepsilon,
        \end{equation*}
        holds on $K$. Combine the above results, we have on $K$, for $t\in [0,1]$,
        \begin{equation*}
            |\tilde h(t)-\tilde h_0|_{C^k(\tilde h_0)}\le 2\varepsilon+C't.
        \end{equation*}
        Then $\tilde h(t)$ converges to $\tilde h_0$ smoothly on $K$ as expected.
    \end{proof}
    \begin{claim}\label{claim 3}
    Let $\tilde h(t)_{t\in (0,\infty)}\in\varkappa(\omega_0)$ with $\mathcal{D}(\lambda_n)\tilde g(t)$ converging to $\tilde h(t)$ locally smooth on $M\times (0,\infty)$ for some positive sequence $\lambda_n\to 0$.
         For every $\varepsilon>0$ there exist $\delta(\varepsilon),T(\varepsilon)>0$ such that for all $t\in (0,T]$, we have
    \begin{equation*}
        \textnormal{Diam}_{\tilde h(t)}(\pi^{-1}\{r^2\le\delta\})\le \varepsilon.
    \end{equation*}
    \end{claim}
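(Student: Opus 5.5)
The plan is to control the diameter of the compact set $\pi^{-1}\{r^2\le\delta\}$ using the uniform comparison in Theorem \ref{main estimate thm}, passed to the limit flow. The key point is that the bounds in Theorem \ref{main estimate thm}\ref{b condition} are scale invariant. As in the proof of Claim \ref{claim 1}, for every $n$ we have
\[
C^{-1}g(t+\lambda_n)\le\mathcal D(\lambda_n)\tilde g(t)\le Cg(t+\lambda_n)
\]
with $C$ independent of $n$. Letting $n\to\infty$ pointwise on $M\times(0,\infty)$ gives
\[
C^{-1}g(t)\le\tilde h(t)\le Cg(t)\quad\text{for all } t>0.
\]
Hence $d_{\tilde h(t)}\le \sqrt C\, d_{g(t)}$, and it suffices to bound $\textnormal{Diam}_{g(t)}(\pi^{-1}\{r^2\le\delta\})$ for the self-similar solution.

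For the self-similar flow, $g(t)=t\Phi_t^*g$ and $\Phi_t(p)=p$ for a zero $p\in E$ of $X$. For any $x$ with $r(x)^2\le\delta$, Corollary \ref{proper function with t}, in the form of \eqref{eq: tft is aprabolic distance}, gives
\[
d_{g(t)}(p,x)^2\le A\, t f(\Phi_t(x)).
\]
The second line of Corollary \ref{proper function with t} gives
\[
tf(\Phi_t(x))\le \frac{r(x)^2}{2}+Ct\le \frac{\delta}{2}+Ct.
\]
Therefore
\[
\textnormal{Diam}_{g(t)}(\pi^{-1}\{r^2\le\delta\})\le 2\sqrt{A(\delta/2+Ct)}.
\]
This gives the diameter bound for $\tilde h(t)$, but only after the distance comparison on the set itself, which is the step below.

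One subtlety is that distances measured inside the whole manifold, versus intrinsically in the set, could differ. The claim presumably means the diameter measured with $d_{\tilde h(t)}$ on $M$, so the metric comparison $d_{\tilde h(t)}\le\sqrt C d_{g(t)}$, which follows from comparing lengths of curves, suffices. Given $\varepsilon$, I would choose $\delta$ and $T$ with
\[
2\sqrt{CA(\delta/2+CT)}\le\varepsilon .
\]
Then for all $t\in(0,T]$, $\textnormal{Diam}_{\tilde h(t)}(\pi^{-1}\{r^2\le\delta\})\le\varepsilon$.

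The main obstacle I expect is justifying the uniform comparison for the limit flow down to $t\to0^+$. Here the key is that the constant $C$ in Theorem \ref{main estimate thm} does not depend on $t$ or on the scale $\lambda_n$, and that $g(t+\lambda_n)\to g(t)$ smoothly for each fixed $t>0$. The other point to check is that the constants in Corollary \ref{proper function with t} are uniform in $t>0$, which holds since they come directly from Lemma \ref{proper function} by scaling.
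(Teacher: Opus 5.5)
Your proof is correct and follows the paper's route. You pass the uniform comparison $C^{-1}g(t+\lambda_n)\le\mathcal D(\lambda_n)\tilde g(t)\le Cg(t+\lambda_n)$ to the limit to get $C^{-1}g(t)\le\tilde h(t)\le Cg(t)$, and then bound the $g(t)$-diameter of $\pi^{-1}\{r^2\le\delta\}$ by a quantity of order $\sqrt{\delta}+\sqrt{t}$. The only cosmetic difference is that you use the already-rescaled Corollary \ref{proper function with t}, whereas the paper rescales to $g$ explicitly and uses the inclusion $\pi^{-1}\{r^2\le\delta/t\}\subset\{d_g(p,\cdot)\le\sqrt{\delta/t}+A\}$.
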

    \begin{proof}
        Recall that there exists a constant $C>1$ such that on $M\times [0,\infty)$, we have
        \begin{equation*}
            \frac{1}{C}g(t+1)\le \tilde g(t)\le Cg(t+1).
        \end{equation*}
        Therefore, on $M\times [0,\infty)$ we have
        \begin{equation*}
              \frac{1}{C}g(t+\lambda_n)\le \mathcal{D}(\lambda_n)\tilde g(t)\le Cg(t+\lambda_n).
        \end{equation*}
        For any $t>0$, the metrics $\mathcal{D}(\lambda_n)\tilde g(t)$ converges to $\tilde h(t)$ pointwisely, letting $\lambda_n\to 0^+$ tells us that
        \begin{equation*}
             \frac{1}{C}g(t)\le \tilde h(t)\le Cg(t).
        \end{equation*}
         Hence, for all $\delta>0$
        \begin{equation*}
             \textnormal{Diam}_{\tilde h(t)}(\pi^{-1}\{r^2\le\delta\})\le C\textnormal{Diam}_{g(t)}(\pi^{-1}\{r^2\le\delta\})=C\sqrt{t}\textnormal{Diam}_{g}(\pi^{-1}\{r^2\le\frac{\delta}{t}\}).
        \end{equation*}
        Recall that $\pi^{-1}\{r^2\le\frac{\delta}{t}\}\subset \{d_g(p,\cdot)\le \sqrt{\frac{\delta}{t}}+A\}$ holds for fixed point $p\in M$ and uniform constant $A>0$. As a consequence, we conclude that for all $\delta>0$, we have
        \begin{equation*}
             \textnormal{Diam}_{\tilde h(t)}(\pi^{-1}\{r^2\le\delta\})\le C\sqrt{t}(\sqrt{\frac{\delta}{t}}+A)=C(\sqrt{\delta}+A\sqrt{t}).
        \end{equation*}
        It suffices to take $T(\varepsilon)=\frac{\varepsilon^2}{4A^2C^2}$ and $\delta(\varepsilon)=\frac{\varepsilon^2}{4C^2}$.
    \end{proof}
      The proof of Theorem \ref{relation between two limit sets} follows directly from Claim \ref{claim 1}, \ref{claim 2} and \ref{claim 3}.

      In particular, if $\pi_*\tilde h_0$ is a K\"ahler conical metric, let us denote it by $g_{\mathcal{C}}'$, then $\tilde h(t)_{t\in (0,\infty)}$ is a solution to K\"ahler-Ricci flow with conical initial data $g_{\mathcal{C}}'$. Moreover, $JX$ always appears as a Killing vector field and there exists a uniform constant $C>0$ such that along the flow for all $t>0$
      \begin{equation*}
        \sup_M  |\Rm(\tilde h(t))|_{\tilde h(t)}\le \frac{C}{t}.
      \end{equation*}
      To use the uniqueness results in \cite{longteng1}, we only need to check the cohomological condition. Let $(M,g',X)$ be the unique gradient K\"ahler-Ricci expander with bounded curvature such that $\pi_*g'(t)$ converges to $g_{\mathcal{C}}'$ locally smoothly on $\mathcal{C}\setminus\{o\}$ thanks to the results in \cite{ConlonDeruelleJDG}. We only need to show there exists a real-valued smooth function $\varphi'$ on $M\times (0,\infty)$ such that for all $t>0$,
      \begin{equation*}
          \tilde h(t)=g'(t)+\partial\bar\partial \varphi'(t).
      \end{equation*}
      Since $g'$ and $g$ are in the same cohomology class, thus it suffices to show the existence of real-valued smooth function $\bar\varphi$ that is defined on $M\times (0,\infty)$ such that for all $t>0$,
      \begin{equation}\label{cohomology condition}
          \tilde h(t)=g(t)+\partial\bar\partial \bar\varphi(t).
      \end{equation}
      Recall that by definition $\tilde g(t)=g(t+1)+\partial\bar\partial \varphi(t)$. Now we show that $\mathcal{D}(\lambda_n)\varphi$ converges smoothly locally to some function $\bar\varphi$ then \eqref{cohomology condition} holds automatically. We already know there exists a uniform constant $C>0$ such that along the flow
      \begin{equation*}
          |\varphi(t)|\le C(t+1)f\circ\Phi_{t+1}+C(1+t),\quad |(\nabla^{g(t+1)})^k\partial\bar\partial\varphi(t)|_{g(t+1)}\le C_k((t+1)f\circ\Phi_{t+1})^{-k}.
      \end{equation*}
      Hence,
      \begin{equation*}
           |\mathcal{D}(\lambda_n)\varphi(t)|\le C(\lambda_n+t)f\circ\Phi_{\lambda_n+t}+C(\lambda_n+t),\quad |(\nabla^{g(t+\lambda_n)})^k\partial\bar\partial\mathcal{D}(\lambda_n)\varphi(t)|_{g(t+1)}\le C_k((\lambda_n+t)f\circ\Phi_{\lambda_n+t})^{-k}.
      \end{equation*}
      Then, up to subsequences, $\mathcal{D}(\lambda_n)\varphi(t)$ will converge to some function $\bar\varphi$ as required.
\end{proof}
  \begin{proof}[Proof of Corollary \ref{singleton}]
  If $\kappa(g_0)$ contains only a singleton, suppose that $\kappa(g_0)=\{g_{\mathcal{C}}'\}$. We prove $g_{\mathcal{C}}'$ is a conical metric on $\mathcal{C}$. Let $\omega_{\mathcal{C}}'$ be its K\"ahler form. Since for all $\lambda,\mu>0$ we have
  \begin{equation*}
      \mathcal{D}'(\lambda)D'(\mu)\omega_0=\mathcal D'(\lambda\mu)\omega_0,
  \end{equation*}
  we conclude that $\mathcal{D}'(\lambda)\omega_{\mathcal{C}}'=\omega_{\mathcal{C}}'$ for all $\lambda>0$. Therefore,
  \begin{equation*}
      \mathcal{L}_X\omega_\mathcal{C}'=2\omega_\mathcal{C}'.
  \end{equation*}Moreover, $\omega_0=\omega+i\partial\bar\partial \psi_0$, it follows that there exists some smooth function $\psi'$ on $\mathcal{C}\setminus \{o\}$ such that
  \begin{equation*}
      \omega_{\mathcal{C}}'=\omega_\mathcal{C}+i\partial\bar\partial \psi'.
  \end{equation*}
  Proposition \ref{rough estimates} implies that on $\mathcal{C}\setminus \{o\}$, we have
  \begin{equation*}
      |\psi'|\le (e^a-1)\frac{r^2}{2},\quad |X\cdot\psi'|\le (e^a-1)r^2.
  \end{equation*}
  We compute
  \begin{equation*}
       \mathcal{L}_X\omega_\mathcal{C}+i\partial\bar\partial(X\cdot\psi') =\mathcal{L}_X\omega_\mathcal{C}'=2\omega_\mathcal{C}'+2i\partial\bar\partial\psi'.
  \end{equation*}
  It follows that
  \begin{equation*}
      i\partial\bar\partial(X\cdot\psi'-2\psi')=0.
  \end{equation*}
  Recall that by the $JX-$invariance, we also have
  \begin{equation*}
      JX\cdot\psi'=0.
  \end{equation*}
  Hence, the function $X\cdot\psi'-2\psi'$ is a $JX-$invariant pluri-harmonic function on $\mathcal{C}\setminus \{o\}$. By the Liouville-type lemma stated in \cite{longteng1}, there exist $c,b\in\R$ such that
  \begin{equation*}
      X\cdot\psi'-2\psi'=c\log r+b.
  \end{equation*}
  Moreover the previous bounds show that, in particular, function $X\cdot\psi'-2\psi'$ tends to 0 around $o$, then we have $c=b=0$, and therefore $\omega_\mathcal{C}'$ is K\"ahler conical metric on the cone $\mathcal{C}$. When in this case, by Theorem \ref{relation between two limit sets}, $\varkappa(g_0)$ contains only the self-similar solution $g'(t)_{t>0}$, such that $(M,g'(1),X)$ is the unique gradient K\"ahler-Ricci expander with bounded curvature such that the corresponding self-similar solution $\pi_*\omega'(t)_{t>0}$ converges locally smoothly to $\omega_\mathcal{C}'$ on $\mathcal{C}\setminus\{o\}$ as $t$ tends to 0.

  Conversely, we suppose that $\tilde g(t)_{t\ge0}$ in Corollary \ref{singleton} is asymptotically self-similar, i.e, $\varkappa(g_0)=\{g'(t)_{t>0}\}$. Then similarly, for all $\lambda>0$, we have

  \begin{equation*}
      \mathcal{D}(\lambda)g'(t)=g'(t).
  \end{equation*}
  In particular, we have for any $\lambda>0$, it holds that
  \begin{equation*}
      g'(\lambda)=\mathcal{D}(\lambda)g'(\lambda)=\lambda\Phi_\lambda^*g'(1).
  \end{equation*}
  Then $(M,g'(1),X)$ is an expanding K\"ahler-Ricci soliton. Let $g':=g'(1)$, since $X$ is gradient along the flow $\tilde \omega(t)_{t\ge 0}$, then $X$ s gradient with respect to $g'$. The metric $g'$ has quadratic curvature decay as shown in \ref{Theorem B i} of Theorem \ref{relation between two limit sets}, then $g'(t)$ converges to a K\"ahler conical metric $g_\mathcal{C}'$ on $\mathcal{C}\setminus\{o\}$.
  \end{proof}
\begin{proof}[Proof of Corollary \ref{singular data}]
    Let $g_{\textnormal{sing}}$ be defined as in Corollary \ref{singular data}. For any $s>0$ we define the initial metric on $M$ as follows:
    \begin{equation*}
        \omega_0^s:=\omega+i\partial\bar\partial\left(\chi(r(\cdot)N^{-1})\varphi_{\textnormal{sing}
        }(s)\right),
    \end{equation*}
    with constant $N>0$ to be determined.
    Here $\varphi_{\textnormal{sing}}(s)$ is defined as $\varphi_{\textnormal{sing}}(s)=\frac{1}{s}\Phi_{\frac{1}{s}}^*\varphi_{\textnormal{sing}}$. Similar to \cite[Proposition 3.3]{ChenHallgrenLucas}, one can show that if $N^{-1}$ and $a'$ in Corollary \ref{singular data} is sufficiently small, then $\omega_0^s$ defines a metric. Indeed, on $\{4N^2\ge r^2\ge N^2\}$ we compute for each $k\in\N_0$,
    \begin{equation*}
       \begin{split}
            r^k|(\nabla^{g_\mathcal{C}})^k(g_0^s-g_\mathcal{C})|_{g_C}&\le |(\nabla^{g_\mathcal{C}})^k(g-g_\mathcal{C})|_{g_\mathcal C}+C_k\sum_{i=0}^{k+2}N^{i-2-k}|(\nabla^{g_\mathcal{C}})^{i}\varphi_{\operatorname{sing}}(s)|_{g_\mathcal{C}}\\
            &\le C_kN^{-2}+C_k.
       \end{split}
    \end{equation*}   
In particular,
\begin{equation*}
    |g_0^s-g_\mathcal{C}|_{g_C}\le |g-g_\mathcal{C}|_{g_\mathcal C}+C\sum_{i=0}^{2}N^{i-2}|(\nabla^{g_\mathcal{C}})^{i}\varphi_{\operatorname{sing}}(s)|_{g_\mathcal{C}}\le CN^{-2}+Ca'.
\end{equation*}
        Moreover, we also have that
        \begin{equation*}
       \operatorname{BL}_{\infty}(g_0^s,g)=     \operatorname{BL}_{\infty}(g_0^s,g_{\mathcal C})\le |(\nabla^{g_\mathcal{C}})^2\varphi_{\operatorname{sing}}(s)|_{g_\mathcal{C}}\le a'.
        \end{equation*}
    If we take $a'\le a$ as in Theorem \ref{main estimate thm}, then let $\omega_s(t)_{t\ge0}$ be the unique Shi's K\"ahler-Ricci flow with initial metric $\omega_0^s$, the results in Theorem \ref{main estimate thm} hold for $\omega_s(t)_{t\ge0}$ for all $s>0$. Notice that the constants $C,\{C_k\}_{k\in\N_0}$ in Theorem \ref{main estimate thm} only depend on the initial value, now we consider $\omega^s(t):=s\Phi_s^*\omega_s(t/s)$, then it holds that for all $s>0$
  \begin{enumerate}
      \item There exists a constant $C>1$ such that
      \begin{equation*}
          \frac{1}{C}\omega(t+s)\le \omega^s(t)\le C\omega(t+s)
      \end{equation*}
      \item For $p\in E$, for all $k\in\N_0$, there is a constant $C_k>0$ such that
      \begin{equation*}
         \sup_M (\sqrt{t+s}+d_{g(t+s)}(p,\cdot))^k|(\nabla^{g(t+s)})^kg^s(t)|_{g(t+s)}\le C_k.
      \end{equation*}
  \end{enumerate}
  We can find a subsequence $\{s_k\}_{k\in\N_0}$ tending to 0 such that $\omega^s(t)$ converges locally smoothly to a solution of K\"ahler-Ricci flow $\omega_{\textnormal{smooth}}(t)_{t>0}$. Moreover, we observe that $\omega^s(0)$ converges locally smoothly to $\omega_{\textnormal{sing}}$ as $s$ tends to 0. By the same argument in the proof of Theorem \ref{relation between two limit sets}, we claim that \ref{Theorem B i}-\ref{Theorem B iv} in Theorem \ref{relation between two limit sets} hold for $\omega_{\operatorname{smooth}}(t)_{t\in (0,\infty)}$ on $M$ and $\pi^*\omega_{\operatorname{sing}}$ on $M\setminus E$.
\end{proof}
\section{Universal initial data}\label{universal initial values}
In this section, we show the existence of a K\"ahler-Ricci flow that can exhibit genuinely different asymptotic behaviors along different sequences of times tending to infinity, as well as under different choices of time-dependent rescalings. To do this, we prove Theorem \ref{universal initial data}
\begin{proof}[Proof of Theorem \ref{universal initial data}]
    Let $\delta>0$ to be determined later, we consider the metric ball $\mathcal{B}(g_{\mathcal{C}},1)$. Every K\"ahler conical metric $g_\mathcal{C}'$ can be related to a $\xi-$invariant smooth function $\psi_S$ that is defined on the link.

    \begin{claim}\label{claim 1 thm E}
        There exists an injection from $\mathcal{B}(g_{\mathcal{C}},1)$ to $C_{\xi}^\infty(S;\R)$:
        \begin{equation*}
            \digamma: \mathcal{B}(g_{\mathcal{C}},1)\mapsto C_{\xi}^\infty(S;\R).
        \end{equation*}
        Here $C_{\xi}^\infty(S;\R)$ denote the set of smooth real-valued functions that are $\xi$-invariant
    \end{claim}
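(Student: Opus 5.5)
To build $\digamma$, I would use the standard description of $\xi$-invariant K\"ahler cone metrics on $\mathcal{C}$ with fixed Reeb field $\xi$ and fixed complex structure. Let $g'_\mathcal{C}\in\mathcal{B}(g_\mathcal{C},1)$ have radial function $r'$. Then $\omega'_\mathcal{C}=\frac{i}{2}\partial\bar\partial r'^2$, and $r'\partial_{r'}=r\partial_r=X$ because both metrics share the holomorphic Euler field $-J\xi$. Hence $r'^2/r^2$ is homogeneous of degree $0$ under $r\partial_r$, so it is a function on the link $S=\{r=1\}$. I would set
\begin{equation*}
\digamma(g'_\mathcal{C}):=\psi_S:=\tfrac12\log\frac{r'^2}{r^2}\Big|_{S},\qquad\textnormal{so that}\qquad r'^2=r^2e^{2\psi_S}.
\end{equation*}
Smoothness of $\psi_S$ comes from the smoothness of $r'$ and $r$ on $\mathcal{C}\setminus\{o\}$. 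The $\xi$-invariance follows because $\xi$ is Killing for $g'_\mathcal{C}$ and preserves $r'$, which is the distance to the apex, and it also preserves $r$.

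To make this well defined, I would first check that $r'$ is determined by $g'_\mathcal{C}$. This is Definition \ref{cone}, where the radius is intrinsically the distance from the apex in the metric completion. I would also check that $r'\partial_{r'}=r\partial_r$, which uses the normalization that elements of $\mathcal{B}$ are cone metrics for the same dilation, since they are $\mathcal{D}'$-homogeneous with respect to $X$. Along the way I would verify that the bi-Lipschitz bound $\operatorname{BL}\le1$ gives $e^{-1}\le r'^2/r^2\le e$. This follows by comparing lengths of radial curves: $|r\partial_r|^2_{g'}=r'^2$ and $|r\partial_r|^2_{g}=r^2$. So $\psi_S$ is in fact bounded by $\tfrac12$, which is the bound that will be useful later in the density construction.

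Injectivity is then immediate. If $\digamma(g_1)=\digamma(g_2)$, the two radial functions $r_1$ and $r_2$ agree on $S$, and by homogeneity of degree $1$ along $r\partial_r$ they agree on all of $\mathcal{C}\setminus\{o\}$. Hence $\omega_1=\frac{i}{2}\partial\bar\partial r_1^2=\omega_2$, and since $J$ is fixed, $g_1=g_2$.

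The main obstacle is justifying rigorously that every $g'_\mathcal{C}$ in the ball has the same radial vector field $r\partial_r$. I would handle this by using the paper's convention that conical metrics here satisfy $\mathcal{L}_X\omega'_\mathcal{C}=2\omega'_\mathcal{C}$, as derived in the proof of Corollary \ref{singleton}, and that they are $\xi$-invariant. From these one gets $\omega'_\mathcal{C}=\frac{i}{2}\partial\bar\partial\big(2\,\omega'_\mathcal{C}(X,JX)\cdot\tfrac12\big)$ up to normalization, with $r'^2=|X|^2_{g'_\mathcal{C}}$. With this formula the homogeneity is explicit.
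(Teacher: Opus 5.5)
Your proposal is correct and follows the paper's argument. Both define $\digamma$ through the ratio $R^2/r^2$ of squared radial functions, which descends to $S$ because every metric in the ball has Euler field $X=r\partial_r$; the paper simply asserts this from $\xi$-invariance, just as you adopt it as a convention. Both then get injectivity from $\omega'_\mathcal{C}=\frac{i}{2}\partial\bar\partial R^2$. The differences are cosmetic: you read off homogeneity directly from $r'^2=|X|^2_{g'_\mathcal{C}}$ instead of the paper's Liouville-lemma step, and you use $\frac12\log(R^2/r^2)$ where the paper uses $R^2/r^2-1$, i.e.\ $\omega'_\mathcal{C}=i\partial\bar\partial\big((1+\psi')\frac{r^2}{2}\big)$, which is the linear parametrization the later density construction relies on.
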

  \begin{proof}  To see this, let $R$ denote the radial function with respect to $g_{\mathcal{C}}'$ such that 
    \begin{equation*}
        \omega_\mathcal{C}'=i\partial\bar\partial(\frac{R^2}{2});\quad g_{\mathcal{C}}'=dR^2+R^2g_S'.
    \end{equation*}
    Here $g_S'$ is a Riemannian metric that is defined on the link $S$. Since $g_{\mathcal{C}}'$ is $\xi-$invariant, it follows that the radial vector field $R\partial_R=X$. Then we conclude that
    \begin{equation*}
        \mathcal{L}_X\omega_\mathcal{C}'=2\omega_\mathcal{C}',
    \end{equation*}
    therefore we have
    \begin{equation*}
        i\partial\bar\partial(rR\partial_r R-R^2)=0.
    \end{equation*}
    Recall that we also have $\xi\cdot(rR\partial_r R-R^2)=0$, then by the Liouville-type lemma stated in \cite{longteng1}, we conclude that 
    \begin{equation*}
        rR\partial_r R=R^2,
    \end{equation*}
    it yields that $\frac{R^2}{r^2}$ is a function that is only defined on the link $S$. Hence we have obtained an injection between $\mathcal{B}(g_{\mathcal{C}},1)$ and a space of functions on $S$. We denote this map as $\digamma$, more precisely, for any $g_{\mathcal{C}}'\in \mathcal{B}(g_{\mathcal{C}},1)$,
    \begin{equation*}
        \digamma(g_{\mathcal{C}}')=\psi',
    \end{equation*}
    where $\psi'$ is the unique smooth function on $S$ such that $\omega_{\mathcal{C}}'=i\partial\bar\partial\left((1+\psi')\frac{r^2}{2}\right)$.
\end{proof}    
    For all $\psi_1,\psi_2\in C_{\xi}^\infty(S;\R)$, define
    \begin{equation*}
        |\psi_1-\psi_2|_{\operatorname{link}}=|\psi_1-\psi_2|_{C^1(S)}+|i\partial\bar\partial (\psi_1-\psi_2)|_{g_S}
    \end{equation*}
    We notice that $\digamma(g_\mathcal{C})=0$, we view $\digamma(\mathcal{B}(g_{\mathcal{C}},1))$ as a subset of $(C_{\xi}^\infty(S;\R),|\cdot|_{\operatorname{link}})$. 
\begin{claim}
    There exists an $\varepsilon_0>0$ such that $\digamma$ is an onto on the metric ball $B(0,\varepsilon_0)\subset (C_{\xi}^\infty(S;\R),|\cdot|_{\operatorname{link}})$.
\end{claim}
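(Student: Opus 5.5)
Given $\psi'\in C^\infty_\xi(S;\R)$ with $|\psi'|_{\operatorname{link}}<\varepsilon_0$, the natural candidate preimage is
\begin{equation*}
\omega_\mathcal{C}':=i\partial\bar\partial\Big((1+\psi')\frac{r^2}{2}\Big),
\end{equation*}
where $\psi'$ is extended to $\mathcal{C}\setminus\{o\}$ as a radially constant function. The plan is to show that this $(1,1)$-form is a $\xi$-invariant Kähler conical metric lying in $\mathcal{B}(g_\mathcal{C},1)$. By the construction in Claim \ref{claim 1 thm E}, $\digamma(g_\mathcal{C}')=\psi'$ then holds automatically, which gives surjectivity onto $B(0,\varepsilon_0)$.

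\textbf{Homogeneity and invariance.} The form $\omega_\mathcal{C}'$ is real and closed. It is $\xi$-invariant, because $\xi$ is real holomorphic and preserves both $r$ and $\psi'$. Since $r\partial_r\cdot\big((1+\psi')r^2\big)=2(1+\psi')r^2$ and $r\partial_r$ is real holomorphic, we get $\mathcal{L}_{r\partial_r}\omega_\mathcal{C}'=2\omega_\mathcal{C}'$, which is the scaling law of a cone.

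\textbf{Positivity and closeness to $g_\mathcal{C}$.} Expand
\begin{equation*}
\omega_\mathcal{C}'-\omega_\mathcal{C}=i\partial\bar\partial\Big(\psi'\frac{r^2}{2}\Big)=\psi'\,\omega_\mathcal{C}+\frac{r^2}{2}\,i\partial\bar\partial\psi'+\frac{i}{2}\big(\partial\psi'\wedge\bar\partial r^2+\partial r^2\wedge\bar\partial\psi'\big).
\end{equation*}
Each term is $0$-homogeneous relative to $g_\mathcal{C}$. Measured in $g_\mathcal{C}$, the first term is bounded by $|\psi'|_{C^0(S)}$, the third by $C|\psi'|_{C^1(S)}$ (using $|\partial r^2|_{g_\mathcal{C}}\sim r$ and $|\partial\psi'|_{g_\mathcal{C}}\sim r^{-1}|d\psi'|_{g_S}$), and the middle term by $C|i\partial\bar\partial\psi'|_{g_S}$, where the latter is understood as the restriction to the link. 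Hence
\begin{equation*}
|\omega_\mathcal{C}'-\omega_\mathcal{C}|_{g_\mathcal{C}}\le C|\psi'|_{\operatorname{link}}<C\varepsilon_0.
\end{equation*}
Choosing $\varepsilon_0$ small gives $e^{-1}g_\mathcal{C}\le g_\mathcal{C}'\le e\,g_\mathcal{C}$. This shows that $\omega_\mathcal{C}'$ is positive and that $\operatorname{BL}_\mathcal{C}(g_\mathcal{C},g_\mathcal{C}')\le 1$.

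\textbf{Conical structure.} Set $R^2:=(1+\psi')r^2$, which is positive for $\varepsilon_0<1$. Then $\omega_\mathcal{C}'=\frac{i}{2}\partial\bar\partial R^2$ with $R^2$ homogeneous of degree $2$ under $r\partial_r$ and $\xi$-invariant. By the standard characterization of Kähler cones (Martelli--Sparks--Yau), $g_\mathcal{C}'=dR^2+R^2g_S'$ is a Riemannian cone over $\{R=1\}\cong S$ with radial vector field $R\partial_R=r\partial_r=X$ and Reeb field $\xi$. The key identity to verify is $\nabla^{g'_\mathcal{C}}(R^2/2)=r\partial_r$: from $\omega_\mathcal{C}'=\frac{i}{2}\partial\bar\partial R^2$ and the homogeneity one gets $\iota_{\xi}\omega_\mathcal{C}'=-\frac12 dR^2$. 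This yields a $\xi$-invariant Kähler conical metric in $\mathcal{B}(g_\mathcal{C},1)$.

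\textbf{Main obstacle.} The delicate step is the positivity estimate. The link norm controls only $C^1$ data and $i\partial\bar\partial\psi'$, so one must check that the mixed terms $\partial\psi'\wedge\bar\partial r^2$ are genuinely bounded by the $C^1$ part after rescaling. One must also make sense of $i\partial\bar\partial\psi'$ on $S$, presumably as the transverse $\partial_B\bar\partial_B$ of the Sasakian structure, which is natural because $\psi'$ is $\xi$-invariant and radially constant. Once this bookkeeping is fixed, the smallness of $\varepsilon_0$ closes the argument.
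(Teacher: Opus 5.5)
Your proposal is correct and takes the same approach as the paper. You take the candidate $\omega_\mathcal{C}+i\partial\bar\partial(\psi'\frac{r^2}{2})$, bound the perturbation by $C(n)|\psi'|_{\operatorname{link}}$, and shrink $\varepsilon_0$. Your term-by-term expansion, and your check that $R=\sqrt{1+\psi'}\,r$ satisfies $\nabla^{g'_\mathcal{C}}(R^2/2)=r\partial_r$ (so $R$ is the radial function and $\digamma(g'_\mathcal{C})=\psi'$), fill in steps the paper leaves implicit.
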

\begin{proof}
    Notice that there exists a dimensional constant $C(n)>0$ such that for any $\psi'\in C_{\xi}^\infty(S;\R)$, we have
    \begin{equation*}
        |i\partial\bar\partial(\psi'\frac{r^2}{2})|_{g_\mathcal C}\le C(n)|\psi'|_{\operatorname{link}}.
    \end{equation*}
    We can take $\varepsilon_0$ small enough such that $i\partial\bar\partial(\psi'\frac{r^2}{2})+\omega_\mathcal{C}$ defines a metric that is close to $\omega_\mathcal{C}$.
\end{proof}
\begin{claim}
    There exists an $\delta_0>0$ small enough such that $\digamma(\mathcal{B}(g_{\mathcal{C}},\delta_0))\subset B(0,\varepsilon_0)$.
\end{claim}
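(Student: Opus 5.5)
The plan is to convert the bi-Lipschitz bound $\operatorname{BL}_{\mathcal C}(g_{\mathcal C},g'_{\mathcal C})\le\delta_0$ into a pointwise bound on the tensor $\omega'_{\mathcal C}-\omega_{\mathcal C}=i\partial\bar\partial\bigl(\psi'\tfrac{r^2}{2}\bigr)$. I will then read off the three pieces of $|\psi'|_{\operatorname{link}}$ ($C^0$, first derivatives, and $i\partial\bar\partial\psi'$) from suitable components of this tensor. Here $\psi'=\digamma(g'_{\mathcal C})$ is the function of Claim \ref{claim 1 thm E}. Recall from that claim that $R^2=(1+\psi')r^2$, that $X=r\partial_r=R\partial_R$, and that $\psi'$ is homogeneous of degree $0$ and $\xi$-invariant, so $X\cdot\psi'=\xi\cdot\psi'=0$.

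First, if $e^{-\delta}g_{\mathcal C}\le g'_{\mathcal C}\le e^{\delta}g_{\mathcal C}$, then the eigenvalues of $g'_{\mathcal C}$ relative to $g_{\mathcal C}$ lie in $[e^{-\delta},e^{\delta}]$. Hence
\begin{equation*}
|g'_{\mathcal C}-g_{\mathcal C}|_{g_{\mathcal C}}\le C(n)(e^{\delta}-1),\qquad\text{equivalently}\qquad \Bigl|i\partial\bar\partial\Bigl(\psi'\frac{r^2}{2}\Bigr)\Bigr|_{g_{\mathcal C}}\le C(n)(e^\delta-1).
\end{equation*}
The $C^0$ bound comes from testing on the radial field. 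We have $g'_{\mathcal C}(X,X)=|R\partial_R|^2_{g'_{\mathcal C}}=R^2$ and $g_{\mathcal C}(X,X)=r^2$, so $e^{-\delta}\le 1+\psi'\le e^{\delta}$, which gives $|\psi'|\le e^\delta-1$.

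Next I expand on the slice $\{r=1\}\cong S$:
\begin{equation*}
i\partial\bar\partial\Bigl(\psi'\frac{r^2}{2}\Bigr)=\psi'\,\omega_{\mathcal C}+\frac{1}{2}\bigl(i\partial\psi'\wedge\bar\partial r^2+i\partial r^2\wedge\bar\partial\psi'\bigr)+\frac{r^2}{2}\,i\partial\bar\partial\psi'.
\end{equation*}
I evaluate this on pairs $(X,Y)$ and $(\xi,Y)$, where $Y$ is tangent to $S$ and orthogonal to $\xi$. Since $X\cdot\psi'=\xi\cdot\psi'=0$ and $\psi'$ is basic, the last term contributes nothing to these mixed components. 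The first term is controlled by the $C^0$ bound. The cross term equals, up to a universal constant, $r^2\,d\psi'(JY)$, because $dr^2(X)=2r^2$ and $d^cr^2$ is dual to $\xi$. This yields $|d\psi'|_{g_S}\le C(n)(e^\delta-1)$.

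Finally, on the horizontal directions I solve the expansion for $\tfrac{r^2}{2}i\partial\bar\partial\psi'$. Every other term is already bounded by $C(n)(e^\delta-1)$, so $|i\partial\bar\partial\psi'|_{g_S}\le C(n)(e^\delta-1)$. Combining the three estimates gives $|\psi'|_{\operatorname{link}}\le C(n)(e^{\delta}-1)$, and choosing $\delta_0$ with $C(n)(e^{\delta_0}-1)<\varepsilon_0$ proves the claim.

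I expect the main obstacle to be the bookkeeping in the cross term. One must check that, in the cone splitting $T\mathcal C=\langle X,\xi\rangle\oplus H$, the mixed components $(X,Y)$ and $(\xi,Y)$ isolate $d\psi'|_H$ with a nonzero universal coefficient. This uses the $\xi$-invariance and degree-zero homogeneity of $\psi'$, together with $J X=\xi$, and it should be done at $r=1$ and extended by homogeneity. I will first write everything in a local frame $\{X,\xi,e_i,Je_i\}$ adapted to the transverse Kähler structure, where $\omega_{\mathcal C}=r\,dr\wedge\eta+r^2\omega^T$, and carry out the computation there.
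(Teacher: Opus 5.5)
Your proposal is correct and follows essentially the paper's route. The $C^0$ bound comes from the radial direction: your $g'_{\mathcal C}(X,X)=R^2$ is the paper's $(r\partial_r,\xi)$ component of $\omega'_{\mathcal C}$. The derivative bound comes from the mixed $(X,Y)$ components of $i\partial\bar\partial\bigl(\psi'\tfrac{r^2}{2}\bigr)$, which is what the paper's estimate of $Y\cdot(X\cdot(\psi' r^2))$ amounts to. Your horizontal-component argument for $|i\partial\bar\partial\psi'|_{g_S}$ fills in the step the paper dismisses as obvious; there the cross term vanishes, and $\psi'\omega_{\mathcal C}$ likewise vanishes on the mixed pairs.
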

\begin{proof}
To see this, we show that for any $g_{\mathcal{C}}'\in \mathcal{B}(g_{\mathcal{C}},\delta_0)$, when $\delta_0$ is sufficiently small,
    \begin{equation*}
    |\digamma(g_{\mathcal{C}}')|_{\operatorname{link}}\le  \varepsilon_0.
    \end{equation*}
    Suppose $\digamma(g_{\mathcal{C}}')=\psi'$, then we have
\begin{equation*}
   e^{-\delta_0}\omega_\mathcal C\le \omega_{\mathcal{C}}'=i\partial\bar\partial\left((\psi'+1)\frac{r^2}{2}\right)\le e^{\delta_0}\omega_\mathcal C.
\end{equation*}
It follows that
\begin{equation*}
   e^{-\delta_0}\frac{r^2}{2}\le \frac{1}{2} i\partial\bar\partial\left((\psi'+1)\frac{r^2}{2}\right)(r\partial_r,\xi)=(1+\psi')\frac{r^2}{2}\le e^{\delta_0}\frac{r^2}{2}.
\end{equation*}
Therefore, we get $|\psi'|_{C^0(S)}\le e^{\delta_0}-1$. Moreover, $|i\partial\bar\partial \psi'|_{g_S}\le e^{\delta_0}-1$ is obvious by definition.

Now we compute, for all $Y$ a vector field on $S$, 
\begin{equation*}
 |Y\cdot(X\cdot(\psi' r^2))|\le 4(e^{\delta_0}-1)r|Y|_{g_S}|X|_{g_\mathcal{C}}.
\end{equation*}
We get
\begin{equation*}
    |Y\cdot\psi'|\le 2(e^{\delta_0}-1)|Y|_{g_S}.
\end{equation*}
And therefore, $|d\psi'|_{g_S}\le 2(e^{\delta_0}-1)$. This completes the proof.
\end{proof}
  Now we take $\delta>0$ sufficiently small such that $\digamma(\mathcal{B}(g_{\mathcal{C}},\delta))\subset B(0,\varepsilon)$ with $\varepsilon>0$ to be determined, now we consider the inverse map
    \begin{equation*}
        \digamma^{-1}: B(0,\varepsilon)\mapsto \mathcal{B}(g_{\mathcal{C}},1)),
    \end{equation*}
    then $\mathcal{B}(g_{\mathcal{C}},\delta)\subset \digamma^{-1}(B(0,\varepsilon))$ and 
    there exists a bi-Lipschitz constant $L>0$ such that for any $\psi_1,\psi_2\in B(0,\varepsilon)$,
    \begin{equation*}
        \operatorname{BL}(\digamma^{-1}(\psi_1),\digamma^{-1}(\psi_2))\le L|\psi_1-\psi_2|_{C_{g_S}^2}.
    \end{equation*}
    Let $\{\psi_k\}_{k\in\N}$ be a countable dense sequence in $B(0,\varepsilon)$, hence $\mathcal{B}(g_{\mathcal{C}},\delta)\subset\overline{\{\digamma^{-1}(\psi_k)\}_{k\in\N}}$.

    Let $a_n=(2n+2)!$ and $b_n=(2n+3)!$. Using a bijection $\iota:\N\to\N^2$, we rename the sequences as $\{a_{k,l}=a_{\iota^{-1}(k,l)}\}_{k\in\N,l\in\N}$ and $\{b_{m,n}=b_{\iota^{-1}(m,n)}\}_{m\in\N,n\in\N}$. It follows that if $(k,l)\neq (m,n)$, then 
    \begin{equation*}
        (a_{k,l},b_{k,l})\cap (a_{m,n},b_{m,n})=\emptyset.
    \end{equation*}
    We now set $\lambda_{k,l}=\sqrt{a_{k,l}b_{k,l}}$, then we have for any $k\in\N$,
    \begin{equation*}
        \lim_{l\to\infty}\frac{a_{k,l}}{\lambda_{k,l}}=0,\quad \lim_{l\to \infty}\frac{b_{k,l}}{\lambda_{k,l}}=\infty.
    \end{equation*}
    Let $1\ge \chi_{k,l}\ge 0$ be a cut-off function such that 
    \begin{equation*}
        (a_{k,l}\le r^2\le b_{k,l})\subset \supp \chi_{k,l}\subset (c_{k,l},d_{k,l}),
    \end{equation*}
    where
    \begin{equation*}
       c_{k,l}=\frac{a_{k,l}+b_{\iota(\iota^{-1}(k,l)-1)}}{2},\quad d_{k,l}=\frac{b_{k,l}+a_{\iota(\iota^{-1}(k,l)+1)}}{2}.
    \end{equation*}
Then it follows that $\supp\chi_{k,l}\cap\supp\chi_{m,n}=\emptyset$ if $(k,l)\neq(m,n)$. We can also take such cut-off functions such that for all $k,l\in\N$
    \begin{equation*}
       \begin{split}
           &\sup_\R |\chi_{k,l}'|(|d_{k,l}-b_{k,l}|+|a_{k,l}-c_{k,l}|)\le 100;\\
           & \sup_\R|\chi_{k,l}''|(|d_{k,l}-b_{k,l}|^2+|a_{k,l}-c_{k,l}|^2)\le 100.
       \end{split}
    \end{equation*}

    Now we define 
    \begin{equation*}
        \omega_0=\omega+i\partial\bar\partial\left(\sum_{k\in\N}\sum_{l\in\N}\chi_{k,l}(r(\cdot))\frac{r^2}{2}\psi_k\right).
    \end{equation*}
    First we show that $\omega_0$ is a metric that is $a-$close to $\omega$. We compute
\begin{equation*}
    \begin{split}
        |\omega_0-\omega|_{g_\mathcal{C}}&\le C(n)\sum_{k,l\in\N}|\psi_k|_{C^2(S)}\left(\chi_{k,l}(r(\cdot))+r|\nabla^{g_\mathcal{C}}\chi_{k,l}(r(\cdot))|_{g_{\mathcal{C}}}+r^2|(\nabla^{g_\mathcal{C}})^2\chi_{k,l}(r(\cdot))|_{g_{\mathcal{C}}}\right)\\
        &\le \varepsilon C(n)(1+\frac{100d_{k,l}}{|d_{k,l}-b_{k,l}|}+\frac{100d_{k,l}^2}{|d_{k,l}-b_{k,l}|^2}).
    \end{split}
\end{equation*}
Let $\iota(q)=(k,l)$, then we have
\begin{equation*}
    \frac{d_{k,l}}{|d_{k,l}-b_{k,l}|}+\frac{d_{k,l}^2}{|d_{k,l}-b_{k,l}|^2}=\frac{a_{q+1}+b_q}{a_{q+1}-b_q}+\left(\frac{a_{q+1}+b_q}{a_{q+1}-b_q}\right)^2\le \frac{2q+5}{2q+3}+(\frac{2q+5}{2q+3})^2<5.
\end{equation*}
Then we have
\begin{equation*}
    |\omega_0-\omega|_{g_\mathcal{C}}\le \varepsilon 501C(n).
\end{equation*}
Now take $\varepsilon$ small enough such that $\omega_0$ is $a-$close to $\omega$, and we investigate $\kappa(\omega_0)$. Since $\lim_{n\to\infty}\lambda_{m,n}=\infty$, we consider
\begin{equation*}
\mathcal{D}'(\lambda_{m,n}^{-2})\omega_0=\omega(\lambda_{m,n}^{-2})+i\partial\bar\partial\left(\sum_{k\in\N}\sum_{l\in\N}\chi_{k,l}(r(\cdot)\lambda_{m,n})\frac{r^2}{2}\psi_k\right).
\end{equation*}
By the definition of cut-off functions, on the annulus $\{\frac{a_{m,n}}{\lambda_{m,n}}\le r\le \frac{b_{m,n}}{\lambda_{m,n}}\}$, we have
\begin{equation*}
    \mathcal{D}'(\lambda_{m,n}^{-2})\omega_0=\omega(\lambda_{m,n}^{-2})+i\partial\bar\partial \left(\frac{r^2}{2}\psi_m\right).
\end{equation*}
Since $\lim_{n\to\infty}\frac{a_{m,n}}{\lambda_{m,n}}=0$ and $\lim_{n\to\infty}\frac{b_{m,n}}{\lambda_{m,n}}=\infty$, we conclude that for any $m\in\N$,
\begin{equation*}
    i\partial\bar\partial\left(\frac{r^2}{2}(1+\psi_m)\right)\in \kappa(\omega_0).
\end{equation*}
Hence $\kappa(\omega_0)$ is dense in $\mathcal{B}(g_{\mathcal{C}},\delta)$ with respect to the metric $\operatorname{BL}$.
\end{proof}
\section{Dynamical system of K\"ahler-Ricci flows on the gradient AC expanders}\label{section dyn system}
Another way of understanding the long-time
asymptotic behavior of the solutions to K\"ahler-Ricci flow is to study the dynamical system generated by the flows. In this section, we study the dynamical behavior of K\"ahler--Ricci flows on the AC gradient K\"ahler-Ricci expander $(M,g,X)$. We construct the associated topological dynamical system and show that it exhibits chaotic behavior.

Throughout this section, we let $(M,g,X)$(resp. $\mathcal{C},g_{\mathcal{C}}$) denote the AC gradient K\"ahler-Ricci expander(resp. K\"ahler cone). Let $\omega$ be the K\"ahler form associated with $g$ and let $f$ be the soliton potential. Let $\pi: M\mapsto\mathcal{C}$ be the K\"ahler resolution as in Theorem \ref{theorem of conlonderuellesun}. We identify $M\setminus E$ with $\mathcal{C}\setminus\{o\}$ via this biholomorphism. Let $\Phi_t$ be the flow of $-\frac{X}{2t}$ for $t>0$. Then the self-similar solution associated with $(M,g,X)$ is given by $g(t):=t\Phi_t^*g.$

\subsection{Dynamical system of bounded solutions to K\"ahler-Ricci flow}
\begin{definition}[$(C,B)-$Bounded solutions K\"ahler-Ricci flow]
Let $C>1$ and $B=\{B_k\}_{k\in\N^*}$, a $(C,B)-$\emph{bounded solution to K\"ahler-Ricci flow} is a K\"ahler-Ricci flow $(g_{\operatorname{bd}}(t))_{t>0}$ such that the following hold:
    \begin{enumerate}
        \item There exists a smooth, real-valued function $\varphi\in C^\infty(M\times (0,\infty))$ such that
        \begin{equation*}
            \omega_{\operatorname{bd}}(t)=\omega(t)+i\partial\bar\partial \varphi(t).
        \end{equation*}
        Here $\omega_{\operatorname{bd}}(t)$ is the K\"ahler form of $g_{\operatorname{bd}}(t)$ for all $t>0$.
        \item On $M\times (0,\infty)$, for all $k\in\N^*$, we have
        \begin{equation*}
            \begin{split}
                &C^{-1}g(t)\le  g_{\operatorname{bd}}(t)\le C g(t).\\
                & \sup_{M}(t\Phi_t^*f)^{k/2}|(\nabla^{g(t)})^k g_{\operatorname{bd}}(t)|_{g(t)}\le B_k.
            \end{split}
        \end{equation*}
        \item The flow $(g_{\operatorname{bd}}(t))_{t>0}$ is $JX$-invariant.
    \end{enumerate}
    Let $\mathcal{N}_{C}^B$ denote the set of all $(C,B)$-bounded solutions to K\"ahler-Ricci flows.
\end{definition}
Now we give $\mathcal{N}_{C}^B$ a suitable topology.
 For all $0<t_0<t_1<\infty,L>0$, let $\mathscr{R}_{[t_0,t_1]}^L$ denote the space-time rectangle $\{r^2\le L\}\times [t_0,t_1]$.
\begin{definition}[Topology]\label{Topology}
    We give a topology on $\mathcal{N}_{C}^B$. For all $(\omega_1(t))_{t>0}\in \mathcal{N}_{C}^B,\varepsilon>0,0<t_0<t_1<\infty,L>0$, define
    \begin{equation*}
        U^\varepsilon(\omega_1(t),{\mathscr{R}_{[t_0,t_1]}^L}):=\{(\omega_2(t))_{t>0}\in\mathcal{N}_C^B\ |\ |\omega_2(t)-\omega_1(t)|_{g(t)}<\epsilon \textnormal{ on $\mathscr{R}_{[t_0,t_1]}^L$}\}.
    \end{equation*}
    Let $\mathcal{T}$ denote the topology generated by $\{ U^\varepsilon(\omega_1,{\mathscr{R}})\}_{\varepsilon>0,\omega_1\in \mathcal{N}_{C}^B}$.  
\end{definition}
\begin{definition}[Metric structure]
    For all $(\omega_1(t))_{t>0},(\omega_2(t))_{t>0}\in \mathcal{N}_C^B$, define
    \begin{equation*}
        d(\omega_1,\omega_2)=\sum_{i,j\in\N_0}e^{-i-j}\sup_{\mathscr{R}_{[e^{-i},e^i]}^{e^j}}|\omega_1(t)-\omega_2(t)|_{g(t)}.
    \end{equation*}
\end{definition}
\begin{prop}
     The ordered pair $(\mathcal{N}_C^B,d)$ is a metric space. And the topology induced by metric $d$ is equivalent to $\mathcal{T}$.
\end{prop}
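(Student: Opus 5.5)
We must show that $d$ is a metric and that the topology it induces equals $\mathcal{T}$.

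\textbf{Finiteness of $d$.} For $\omega_1,\omega_2\in\mathcal{N}_C^B$ we have $C^{-1}g(t)\le g_i(t)\le Cg(t)$. Hence $|\omega_i(t)|_{g(t)}\le \sqrt{n}\,C$, and every supremum in the series is bounded by $2\sqrt{n}\,C$. Since $\sum_{i,j}e^{-i-j}<\infty$, the distance $d$ is finite.

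\textbf{Metric axioms.}
\begin{itemize}
\item Symmetry is immediate from the definition.
\item The triangle inequality holds termwise, because the pointwise norm $|\cdot|_{g(t)}$ satisfies it and suprema are subadditive.
\item For definiteness, suppose $d(\omega_1,\omega_2)=0$. Then every term vanishes, so $\omega_1=\omega_2$ on each rectangle $\{r^2\le e^j\}\times[e^{-i},e^i]$. These rectangles exhaust $M\times(0,\infty)$, since $r$ is a continuous proper function on $M$ (Lemma \ref{proper function}). Therefore the two flows coincide.
\end{itemize}

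\textbf{Every $d$-ball contains a $\mathcal{T}$-neighbourhood.} Fix $\omega_1$ and $\eta>0$. Choose $N$ so that the tail $\sum_{i+j>N}e^{-i-j}\cdot 2\sqrt{n}\,C<\eta/2$. Every rectangle with $i,j\le N$ is contained in $\mathscr{R}:=\mathscr{R}^{e^N}_{[e^{-N},e^N]}$. If $\omega_2\in U^{\epsilon}(\omega_1,\mathscr{R})$, then
\begin{equation*}
d(\omega_1,\omega_2)<\epsilon\sum_{i,j\ge0}e^{-i-j}+\eta/2 .
\end{equation*}
Taking $\epsilon$ small makes this less than $\eta$, so $U^{\epsilon}(\omega_1,\mathscr{R})$ lies inside the $d$-ball of radius $\eta$.

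\textbf{Every $\mathcal{T}$-basic set contains a $d$-ball.} Take a basic set $U^\epsilon(\omega_1,\mathscr{R}^L_{[t_0,t_1]})$ and a point $\omega_2$ in it. The quantity
\begin{equation*}
s:=\sup_{\mathscr{R}^L_{[t_0,t_1]}}|\omega_2-\omega_1|_{g(t)}
\end{equation*}
satisfies $s<\epsilon$. Here the strict inequality uses that the supremum over a compact set of a continuous function is attained, and each rectangle is compact because $r$ is proper. Choose $i,j$ with $e^{-i}\le t_0$, $e^i\ge t_1$ and $e^j\ge L$; then $\mathscr{R}^L_{[t_0,t_1]}\subset\mathscr{R}^{e^j}_{[e^{-i},e^i]}$. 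If $d(\omega_2,\omega_3)<e^{-i-j}(\epsilon-s)$, then
\begin{equation*}
\sup_{\mathscr{R}^L_{[t_0,t_1]}}|\omega_3-\omega_2|_{g(t)}<\epsilon-s ,
\end{equation*}
and the triangle inequality gives $\omega_3\in U^\epsilon(\omega_1,\mathscr{R}^L_{[t_0,t_1]})$. So the basic set contains a $d$-ball around each of its points.

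Since the sets $U^\epsilon$ generate $\mathcal{T}$, the two comparisons together show that the topologies coincide. The only delicate points are the finiteness of $d$, which relies on the uniform bi-Lipschitz bound built into $\mathcal{N}_C^B$, and the attainment of suprema on compact rectangles, which gives the strict margin $\epsilon-s>0$. The derivative bounds $B_k$ are not needed for this proposition.
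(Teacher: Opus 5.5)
Your proof is correct and follows the same route as the paper. The uniform bound on $|\omega_1(t)-\omega_2(t)|_{g(t)}$ coming from the bi-Lipschitz condition controls the tail of the series, so a small $U^\epsilon$ on a large rectangle lies inside any $d$-ball. The reverse inclusion, which the paper calls obvious, is exactly your compactness and strict-margin argument, and you also spell out the finiteness and definiteness of $d$, which the paper leaves implicit.
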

\begin{proof}
    Obviously, $d$ defines a metric.

    Now we prove the equivalent of two topologies. It is obvious that the metric topology of $(\mathcal{N}_C^B,d)$ is finer that $\mathcal{T}$. Notice that there is a constant $C>0$ such that
    \begin{equation*}
      |\omega_1(t)-\omega(t)|_{g(t)}\le C,
    \end{equation*}
    holds for all $\omega_1(t)\in \mathcal{N}^C_B$ as in Corollary \ref{coro: definition of F}. Then for all $\varepsilon>0,\omega_1(t)\in \mathcal{N}_C^B$, consider $\omega_2(t)\in U^\delta(\omega_1,\mathscr{
    R}_{[e^{-k},e^k]}^{e^j})$ with $k,j\in\N^*,\delta>0$ to be determined. We compute
    \begin{equation*}
      \begin{split}
          d(\omega_1,\omega_2)&\le \sum_{\ell\le k,p\le j}e^{-\ell-p}\delta+\sum_{\ell>k}e^{-\ell}|\omega_1(t)-\omega_2(t)|_{g(t)}+\sum_{p>j}e^{-p}|\omega_1(t)-\omega_2(t)|_{g(t)}\\
          &\le \sum_{\ell\le k,p\le j}e^{-\ell-p}\delta+2C\sum_{\ell>k}e^{-\ell}+2C\sum_{p>j}e^{-j}\\
          &\le \delta \frac{1}{(1-e^{-1})^2}+2Ce^{-k-1}\frac{1}{1-e^{-1}}+2Ce^{-j-1}\frac{1}{1-e^{-1}}.
      \end{split}
    \end{equation*}
    Take $\delta<<1$ and $k,j>>1$, then we have $ d(\omega_1,\omega_2)<\varepsilon$. It follows immediately that two topologies are equivalent.
\end{proof}
For all $(\omega_{\operatorname{smooth}}(t))_{t>0}\in \mathcal{N}_C^B$, for all $\tau\in\R$, we define
\begin{equation*}
    \phi^\tau(\omega_{\operatorname{smooth}}(t))=\mathcal{D}(e^{-\tau})\omega_{\operatorname{smooth}}(t)=e^{-\tau}\Phi_{e^{-\tau}}^*\omega_{\operatorname{smooth}}(e^{\tau}t).
\end{equation*}
\begin{prop}[Topological dynamical system of K\"ahler-Ricci flows]
    The metric space $(\mathcal{N}_C^B,d)$ together with the map $(\phi^\tau)_{\tau\in \R}$ generate a topological dynamical system $(\mathcal{N}_C^B,d,(\phi^\tau)_{\tau\ge 0})$.
\end{prop}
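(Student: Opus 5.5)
To prove that $(\mathcal{N}_C^B,d,(\phi^\tau)_{\tau\ge0})$ is a topological dynamical system, we must check three things: (i) $\phi^\tau$ maps $\mathcal{N}_C^B$ into itself for every $\tau$; (ii) the group law $\phi^{\tau}\circ\phi^{\sigma}=\phi^{\tau+\sigma}$ together with $\phi^0=\mathrm{id}$; (iii) the map $(\tau,\omega)\mapsto\phi^\tau(\omega)$ is continuous from $[0,\infty)\times\mathcal{N}_C^B$ to $\mathcal{N}_C^B$. By the preceding proposition the metric topology of $d$ coincides with $\mathcal{T}$, so continuity may be tested on the basic neighbourhoods $U^\varepsilon(\omega_1,\mathscr{R}^L_{[t_0,t_1]})$.

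\textbf{Invariance and group law.} Write $\lambda=e^{-\tau}$. Since $\omega(t)=t\Phi_t^*\omega$ is self-similar, we have $\mathcal{D}(\lambda)\omega(t)=\omega(t)$. The flow $\Phi$ satisfies $\Phi_\lambda\circ\Phi_s=\Phi_{\lambda s}$, because all the $\Phi_s$ are time reparametrisations of the flow of $X$. Hence
\begin{equation*}
\phi^\tau\omega_{\operatorname{bd}}(t)=\omega(t)+i\partial\bar\partial\big(\lambda\Phi_\lambda^*\varphi(t/\lambda)\big),
\end{equation*}
and $\phi^\tau\omega_{\operatorname{bd}}$ is again a Kähler--Ricci flow, since the flow equation is invariant under pullback by biholomorphisms and under parabolic rescaling. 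The $JX$-invariance is preserved because $\Phi_\lambda$ commutes with the flow of $JX$ ($[X,JX]=0$, as $X$ is real holomorphic). The bounds in item (2) are preserved because $\Phi_\lambda$ is an isometry from $g(t)$ to $\lambda^{-1}g(\lambda t)$, by the identity $\lambda\Phi_\lambda^* g(t/\lambda)=g(t)$. Concretely, $C^{-1}g(t/\lambda)\le g_{\operatorname{bd}}(t/\lambda)\le Cg(t/\lambda)$ pulls back to $C^{-1}g(t)\le\phi^\tau g_{\operatorname{bd}}(t)\le Cg(t)$. For the derivative bounds, $(t\Phi_t^*f)$ evaluated at $\Phi_\lambda(x)$ and time $t/\lambda$ equals $\lambda^{-1}\,t\Phi_t^*f(x)$, and this scaling factor exactly cancels the scaling of $|(\nabla)^k\cdot|$, so the constants $B_k$ are unchanged. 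The group law $\mathcal{D}(\lambda)\mathcal{D}(\mu)=\mathcal{D}(\lambda\mu)$ is already recorded in Definition \ref{D}.

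\textbf{Continuity.} I would split continuity into two steps. First, fix $\tau$. For a basic set $U^\varepsilon(\phi^\tau\omega_1,\mathscr{R}^L_{[t_0,t_1]})$, the isometry property shows that $|\phi^\tau\omega_2-\phi^\tau\omega_1|_{g(t)}$ on $\mathscr{R}^L_{[t_0,t_1]}$ equals $|\omega_2-\omega_1|_{g(s)}$ on $\Phi_\lambda(\{r^2\le L\})\times[t_0/\lambda,t_1/\lambda]$. Since $d\pi(X)=r\partial_r$, we have $r\circ\Phi_\lambda=\lambda^{-1/2}r$ on $M\setminus E$, so $\Phi_\lambda(\{r^2\le L\})=\{r^2\le L/\lambda\}$. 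Thus
\begin{equation*}
U^\varepsilon\big(\omega_1,\mathscr{R}^{L/\lambda}_{[t_0/\lambda,t_1/\lambda]}\big)\subset(\phi^\tau)^{-1}U^\varepsilon\big(\phi^\tau\omega_1,\mathscr{R}^L_{[t_0,t_1]}\big).
\end{equation*}
Second, handle joint continuity in $\tau$ by the triangle inequality,
\begin{equation*}
|\phi^{\tau'}\omega_2-\phi^\tau\omega_1|\le|\phi^{\tau'}\omega_2-\phi^{\tau}\omega_2|+|\phi^\tau\omega_2-\phi^\tau\omega_1|.
\end{equation*}
The second term is controlled by the first step. The first term requires \emph{equicontinuity in $\tau$, uniform over $\mathcal{N}_C^B$}, on compact space-time rectangles. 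I expect this to be the main obstacle.

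\textbf{The obstacle: equicontinuity in $\tau$.} I would prove it using the uniform bounds defining $\mathcal{N}_C^B$. Differentiating in $\tau$ gives
\begin{equation*}
\partial_\tau\phi^\tau\omega_{\operatorname{bd}}(t)=\big(-\mathcal{L}_{X/2}\,\cdot{}+t\,\partial_t\big)\,\phi^\tau\omega_{\operatorname{bd}}(t)
\end{equation*}
up to normalisation, with $\partial_t\omega_{\operatorname{bd}}=-\Ric$. On a rectangle $\mathscr{R}^L_{[t_0,t_1]}$ with $t_0>0$, three facts give a bound $|\partial_\tau\phi^\tau\omega_{\operatorname{bd}}|_{g(t)}\le K(L,t_0,t_1,C,B)$ uniformly for $\tau$ in compact sets and for all flows in $\mathcal{N}_C^B$: the quantity $t\Phi_t^*f$ is bounded above and below there; $|X|_{g(t)}$ and $|\nabla X|$ are bounded; and $|\nabla\omega_{\operatorname{bd}}|$ and $|\Ric|$ are bounded by $B_1$, $B_2$ and $C$. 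Integrating in $\tau$ then gives the required uniform Lipschitz control. Finally, note that the comparison with the background $g(t)$ is harmless here, since $g(t)$ is itself fixed by every $\phi^\tau$.
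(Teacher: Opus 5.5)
Your proposal is correct and follows the paper's proof essentially step for step. Both arguments establish invariance of $\mathcal{N}_C^B$ under $\mathcal{D}(e^{-\tau})$ together with the group law, get continuity in the flow by rescaling the test rectangle to $\mathscr{R}^{Le^{\tau}}_{[e^{\tau}t_0,e^{\tau}t_1]}$, and get continuity in $\tau$ by integrating a bound on $\partial_\rho\phi^\rho\omega=\mathcal{L}_{X/2}\phi^\rho\omega-\phi^\rho\omega-t\Ric(\phi^\rho\omega)$ that is uniform over $\mathcal{N}_C^B$ (your version of this formula reverses the sign of the Lie derivative and omits the zeroth-order term, which is harmless since each term is bounded), the only differences being that you prove that bound on compact rectangles rather than for all $t>0$, and that you explicitly check the scaling of the constants $B_k$, which the paper only asserts.
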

\begin{proof}
    First, we prove that $\phi^\tau(\mathcal{N}_C^B)\subset \mathcal{N}_C^B$ for all $\tau\ge 0$. Let $(\omega_1(t))_{t>0}$ be a $(C,B)$-bounded solution to K\"ahler-Ricci flow. Then it follows that $\phi^\tau(\omega_1(t))=(e^{-\tau}\Phi_{e^{-\tau}}^*\omega_1(e^\tau t))_{t>0}$ is a $(C,B)$-bounded solution to K\"ahler-Ricci flow. Still, by the definition of $\phi^\tau$, we can see that $\phi^\tau\circ \phi^{\tau'}=\phi^{\tau+\tau'}$ and $F^0=\operatorname{id}$.

    Now we prove that the map: $\phi: \R\times \mathcal{N}_C^B\mapsto\mathcal{N}_C^B$ is continuous.

    It suffices to show, for all $\varepsilon>0,\mathscr{R}_{[t_0,t_1]}^L,(\omega_1(t))_{t>0}\in\mathcal{N}_C^B,\tau_0\ge 0$, there exist $\delta,L'>0,0<t_0'<t_1'<\infty$, a neighborhood $\mathcal{I}(\tau_0)$ of $\tau$ such that for all $(\omega_1'(t))_{t>0}\in U^\delta(\omega_1,\mathscr{R}_{[t_0',t_1']}^{L'}),\tau\in\mathcal{I}(\tau_0)$, we have $\phi^{\tau}(\omega_1')\in U^\varepsilon(\phi^{\tau_0}(\omega_1),\mathscr{R}_{[t_0,t_1]}^L)$. Observe that, if 
    \begin{equation*}
        |\omega_1(t)-\omega_1'(t)|_{g(t)}\le \delta,
    \end{equation*}
holds in $\mathscr{R}_{[t_0',t_1']}^{L'}$, then for all $\tau\in\R$
\begin{equation*}
    |(\phi^\tau\omega_1)(t)-(\phi^\tau\omega_1')(t)|_{g(t)}\le \delta,
\end{equation*}
holds in $\mathscr{R}_{[e^{-\tau}t_0',e^{-\tau}t_1']}^{L'e^{-\tau}}$. Take $t_0'\le e^{\tau_0}t_0, t_1'\ge e^{\tau_0}t_1, L'\ge Le^{\tau_0}$, then we get
\begin{equation*}
     |(\phi^{\tau_0}\omega_1)(t)-(\phi^{\tau_0}\omega_1')(t)|_{g(t)}\le \delta,
\end{equation*}
holds in $\mathscr{R}_{[t_0,t_1]}^{L}$. Moreover, for all $\tau\in \R$, we have
\begin{equation*}
    (\phi^{\tau}\omega_1')(t)-(\phi^{\tau_0}\omega_1')(t)=\int_{\tau_0}^\tau\left(\mathcal{L}_{\frac{X}{2}}(\phi^\rho
\omega_1')(t)-(\phi^\rho
\omega_1')(t)-t\Ric((\phi^\rho
\omega_1')(t))\right)d\rho\end{equation*}
Notice that $\left|\left(\mathcal{L}_{\frac{X}{2}}(\phi^\rho
\omega_1')(t)-(\phi^\rho
\omega_1')(t)-t\Ric((\phi^\rho
\omega_1')(t))\right)\right|_{g(t)}\le C$ holds uniformly for all $t>0, \omega_1'\in\mathcal{N}^C_B$, hence
\begin{equation*}
    |(\phi^{\tau}\omega_1')(t)-(\phi^{\tau_0}\omega_1')(t)|_{g(t)}\le C|\tau_0-\tau|.
\end{equation*}
Then the continuity on $\tau$ follows as well.

\end{proof}
\subsection{Dynamical system of K\"ahler metrics on K\"ahler cone}
\begin{definition}\label{the set Kappa}
    For $\sigma>0$, $b=\{b_k>0\}_{k\in\N^*}$, we say a K\"ahler metric $\omega_{\operatorname{sing}}$ defined on $\mathcal{C}\setminus\{o\}$ is an element of $\mathcal{K}_\sigma^b$ if the following hold:
\begin{enumerate}
    \item There is a $JX-$invariant smooth function $\varphi_{\operatorname{sing}}$ defined on $\mathcal{C}\setminus\{o\}$ such that
    \begin{equation*}
        \omega_{\operatorname{sing}}=\omega_{\mathcal{C}}+i\partial\bar\partial \varphi_{\operatorname{sing}}.
    \end{equation*}
    \item For all $r>0$, we have
    \begin{equation*}
         \sup_{\mathcal{C}\setminus\{o\}} \max\{r^{-2}|\varphi_{\operatorname{sing}}|,r^{-1}|\nabla^{g_\mathcal{C}}\varphi_{\operatorname{sing}}|_{g_{\mathcal{C}}},|(\nabla^{g_\mathcal{C}})^2\varphi_{\operatorname{sing}}|_{g_\mathcal{C}}\}\le \sigma.
    \end{equation*}
    \item For each $k\in\N^*$, we have
    \begin{equation*}
        \sup_{\mathcal{C}\setminus\{o\}} r^k|(\nabla^{g_\mathcal{C}})^{k+2}\varphi_{\operatorname{sing}}|_{g_\mathcal{C}}\le b_k
    \end{equation*}
\end{enumerate}
\end{definition}
For all $0<r_0<R_0<\infty$, define $\mathscr{A}_{r_0,R_0}:=\{r_0^2\le r^2\le R_0^2\}$ on $\mathcal{C}$. In particular, for $k\in\N^*$, let $\mathscr{A}_k$ denote $\mathscr{A}_{e^{-k},e^k}$.
Similarly, for each $\tau\in\R$, we can also define a topological dynamical system $(\mathcal{K}^b_{\sigma},d_{\mathcal{C}},(\phi^\tau_{\mathcal{C}})_{\tau\in\R})$ as follows:

\begin{prop}[Topological dynamical system of K\"ahler metrics]
    Let $\mathcal{K}^b_\sigma$ be the set of K\"ahler metrics as in Definition \ref{the set Kappa}. For $\omega_{\operatorname{sing}}^1,\omega_{\operatorname{sing}}^2\in \mathcal{K}^b_\sigma$, define 
    \begin{equation*}
        d_{\mathcal{C}}(\omega_{\operatorname{sing}}^1,\omega_{\operatorname{sing}}^2):=\sum_{k\in\N^*}e^{-k}\sup_{\mathscr{A}_k}|\omega_{\operatorname{sing}}^1-\omega_{\operatorname{sing}}^2|_{g_\mathcal{C}}.
    \end{equation*}
    For $\tau\in\R,\omega_{\operatorname{sing}}\in \mathcal{K}^b_\sigma$, we also define 
    \begin{equation*}
        \phi_\mathcal{C}^\tau(\omega_{\operatorname{sing}})=e^{-\tau}\Phi_{e^{-\tau}}^*\omega_{\operatorname{sing}}.
    \end{equation*}
    Then $(\mathcal{K}^b_{\sigma},d_{\mathcal{C}},(\phi^\tau_{\mathcal{C}})_{\tau\in\R})$ is a topological dynamical system.
\end{prop}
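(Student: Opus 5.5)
We must check three things: $d_{\mathcal C}$ is a metric on $\mathcal{K}^b_\sigma$; $\phi^\tau_{\mathcal C}$ maps $\mathcal{K}^b_\sigma$ into itself and forms a flow; and $(\tau,\omega_{\operatorname{sing}})\mapsto\phi^\tau_{\mathcal C}(\omega_{\operatorname{sing}})$ is continuous. The plan mirrors the argument just given for $(\mathcal N_C^B,d,(\phi^\tau)_{\tau\ge0})$.

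\emph{The metric.} Finiteness of $d_{\mathcal C}$ comes from condition (2) of Definition \ref{the set Kappa}: $|\omega^i_{\operatorname{sing}}-\omega_{\mathcal C}|_{g_{\mathcal C}}\le|(\nabla^{g_{\mathcal C}})^2\varphi^i_{\operatorname{sing}}|\le\sigma$. Hence each term satisfies $\sup_{\mathscr A_k}|\omega^1_{\operatorname{sing}}-\omega^2_{\operatorname{sing}}|\le 2\sigma$, and the series converges. Symmetry and the triangle inequality are immediate. Definiteness holds because $\bigcup_k\mathscr A_k=\mathcal C\setminus\{o\}$.

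\emph{Invariance and the group law.} On $\mathcal C\setminus\{o\}$ we have $X=r\partial_r$. Thus $\Phi_{s}$ (the flow of $-X/(2s)$ for time $1$) acts as the dilation $r\mapsto s^{-1/2}r$ on the cone, and it commutes with $JX$.

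First, $s\Phi_s^*\omega_{\mathcal C}=\omega_{\mathcal C}$ because $\omega_{\mathcal C}$ is homogeneous of degree $2$. Consequently
\begin{equation*}
\phi^\tau_{\mathcal C}(\omega_{\operatorname{sing}})=\omega_{\mathcal C}+i\partial\bar\partial\bigl(e^{-\tau}\Phi_{e^{-\tau}}^*\varphi_{\operatorname{sing}}\bigr),
\end{equation*}
and the new potential is again smooth and $JX$-invariant.

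Second, the weighted quantities $r^{-2}|\varphi|$, $r^{-1}|\nabla\varphi|$ and $r^{k}|\nabla^{k+2}\varphi|$ are all scale invariant under the map $\varphi\mapsto s\Phi_s^*\varphi$ combined with $r\mapsto s^{-1/2}r$. A direct computation using $\Phi_s^*g_{\mathcal C}=s^{-1}g_{\mathcal C}$ confirms this. Hence the suprema in (2) and (3) are unchanged, and $\phi^\tau_{\mathcal C}(\mathcal K^b_\sigma)=\mathcal K^b_\sigma$.

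Third, $\phi^0_{\mathcal C}=\operatorname{id}$ and $\phi^\tau_{\mathcal C}\circ\phi^{\tau'}_{\mathcal C}=\phi^{\tau+\tau'}_{\mathcal C}$. This follows from $\Phi_s\circ\Phi_{s'}=\Phi_{ss'}$, i.e.\ from the multiplicative group property of $\mathcal D'$ in Definition \ref{D'}.

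\emph{Continuity.} This is the main step. Fix $\tau_0$ and $\omega^1_{\operatorname{sing}}$, let $\omega^2_{\operatorname{sing}}$ be another element, and split
\begin{equation*}
\phi^\tau_{\mathcal C}\omega^2_{\operatorname{sing}}-\phi^{\tau_0}_{\mathcal C}\omega^1_{\operatorname{sing}}=\bigl(\phi^{\tau}_{\mathcal C}\omega^2_{\operatorname{sing}}-\phi^{\tau_0}_{\mathcal C}\omega^2_{\operatorname{sing}}\bigr)+\bigl(\phi^{\tau_0}_{\mathcal C}\omega^2_{\operatorname{sing}}-\phi^{\tau_0}_{\mathcal C}\omega^1_{\operatorname{sing}}\bigr).
\end{equation*}

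For the second bracket, pullback by the dilation is isometric for the norm $|\cdot|_{g_{\mathcal C}}$ on scale-invariant $2$-forms. So $\sup_{\mathscr A_k}$ of the second bracket equals $\sup$ of $|\omega^2_{\operatorname{sing}}-\omega^1_{\operatorname{sing}}|$ over the shifted annulus $\{e^{-2k+\tau_0}\le r^2\le e^{2k+\tau_0}\}$. This shifted annulus lies in $\mathscr A_{k+\lceil|\tau_0|\rceil}$. Therefore the second bracket contributes at most $e^{\lceil|\tau_0|\rceil}d_{\mathcal C}(\omega^1_{\operatorname{sing}},\omega^2_{\operatorname{sing}})$.

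For the first bracket, differentiate in $\rho$:
\begin{equation*}
\frac{d}{d\rho}\phi^\rho_{\mathcal C}\omega=\phi^\rho_{\mathcal C}\bigl(\mathcal L_{X/2}\omega-\omega\bigr)=i\partial\bar\partial\bigl(\phi^\rho_{\mathcal C}(\tfrac X2\cdot\varphi-\varphi)\bigr).
\end{equation*}
The right-hand side has $g_{\mathcal C}$-norm bounded by $C(\sigma,b_1)$ uniformly on $\mathcal C\setminus\{o\}$. Indeed, $X\cdot\varphi=r\partial_r\varphi$, so $|\nabla^2(X\cdot\varphi)|\lesssim|\nabla^2\varphi|+r|\nabla^3\varphi|\le\sigma+b_1$, where we use that $|\nabla X|_{g_{\mathcal C}}$ and $|\nabla^2X|\,r$ are bounded on the cone. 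This bound is preserved by $\phi^\rho_{\mathcal C}$ by the scale invariance above. Integrating in $\rho$ gives
\begin{equation*}
|\phi^\tau_{\mathcal C}\omega^2_{\operatorname{sing}}-\phi^{\tau_0}_{\mathcal C}\omega^2_{\operatorname{sing}}|_{g_{\mathcal C}}\le C|\tau-\tau_0|
\end{equation*}
everywhere. Hence the first bracket contributes at most $C|\tau-\tau_0|\sum_k e^{-k}$ to $d_{\mathcal C}$.

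Summing the two contributions gives joint continuity, indeed local Lipschitz continuity, which completes the proof. The only delicate point is the uniform bound on the $\rho$-derivative. This is exactly where the $b_1$ bound of condition (3) is needed, since $\mathcal L_X$ costs one derivative.
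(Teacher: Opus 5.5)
Your proposal is correct and follows essentially the same route as the paper. The spatial part uses the fact that $\phi^{\tau_0}_{\mathcal C}$ carries pointwise $g_{\mathcal C}$-norms of $2$-forms from one annulus to a dilated annulus, and the temporal part integrates the uniformly bounded derivative $\phi^\rho_{\mathcal C}(\mathcal L_{X/2}\omega-\omega)$ to get $C|\tau-\tau_0|$. You also supply details the paper only asserts: invariance of $\mathcal K^b_\sigma$ under $\phi^\tau_{\mathcal C}$, finiteness of $d_{\mathcal C}$, and the explicit constant $C(\sigma,b_1)$ coming from condition (3) with $k=1$.
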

\begin{proof}
    It is obvious that $d_{\mathcal{C}}$ defines a metric and $(\phi_{\mathcal{C}}^\tau)_{\tau\in\R}$ is an additive group. 
    
    Notice that if \begin{equation*}
        |\omega_{\operatorname{sing}}^1-\omega_{\operatorname{sing}}^2|_{g_\mathcal{C}}\le \delta,
    \end{equation*}
    holds on $\mathscr{A}_{r_0,R_0}$, then 
    \begin{equation*}
       | \phi^\tau_\mathcal{C}(\omega_{\operatorname{sing}}^1)-\phi^\tau_\mathcal{C}(\omega_{\operatorname{sing}}^2)|_{g_{\mathcal
       C}}\le \delta,
    \end{equation*}
    holds on $\mathscr{A}_{e^{-\tau/2}r_0,e^{-\tau/2}R_0}$. Hence, for all $\omega_{\operatorname{sing}}^1\in\mathcal{K}^b_\sigma,\tau_0\in\R$, for all $0<r_1<R_1<\infty$, $\varepsilon>0$, we can take $\omega_{\operatorname{sing}}^2\in\mathcal{K}^{b}_\sigma$ such that 
    \begin{equation*}
        |\omega_{\operatorname{sing}}^2-\omega_{\operatorname{sing}}^1|_{g_{\mathcal{C}}}\le\varepsilon/2,
    \end{equation*}
    holds on $\mathscr{A}_{e^{\tau_0/2}r_1,e^{\tau_0/2}R_1}$. Then it follows that
    \begin{equation*}
          | \phi^{\tau_0}_\mathcal{C}(\omega_{\operatorname{sing}}^1)-\phi^{\tau_0}_\mathcal{C}(\omega_{\operatorname{sing}}^2)|_{g_{\mathcal
       C}}\le \varepsilon/2,
    \end{equation*}
     holds on $\mathscr{A}_{r_1,R_1}$. Moreover, for all $\tau\in\R$, we have
     \begin{equation*}
           | \phi^{\tau}_\mathcal{C}(\omega_{\operatorname{sing}}^2)-\phi^{\tau_0}_\mathcal{C}(\omega_{\operatorname{sing}}^2)|_{g_{\mathcal
       C}}\le \int_{\tau_0}^\tau \left|\mathcal{L}_{\frac{X}{2}}  \phi^{\rho}_\mathcal{C}(\omega_{\operatorname{sing}}^2)- \phi^{\rho}_\mathcal{C}(\omega_{\operatorname{sing}}^2)\right|_{g_{\mathcal{C}}}d\rho\le C|\tau-\tau_0|,
     \end{equation*}
     holds for some uniform constant $C>0$. Then the continuity on $\tau$ follows as well.
\end{proof}
\subsection{Topological semi-conjugacy}
 Thanks to the results in Corollary \ref{singular data}, there is an $\sigma_0>0$ such that for any $\sigma\le\sigma_0$ and for all $\omega_{\operatorname{sing}}\in \mathcal{K}_\sigma^b$, there is a smooth immortal K\"ahler-Ricci flow $(\omega_{\operatorname{smooth}}(t))_{t>0}$ emerging from $\omega_{\operatorname{sing}}$. 
By our estimates in Theorem \ref{estimation of KRF}, we conclude the following lemma.
\begin{lemma}\label{lemma: well-poseness of singular data}
There exist a $\sigma_0>0$ and constants $C>1,B=\{B_k>0\}_{k\in\N_0}$ that depend only on $\sigma_0$ and $b$ such that for all $\omega_{\operatorname{sing}}\in\mathcal{K}^b_\sigma$ with $\sigma\le \sigma_0$, there is a smooth immortal K\"ahler-Ricci flow $(\omega_{\operatorname{smooth}}(t))_{t>0}$ with the following properties:
    \begin{enumerate}
        \item The K\"ahler-Ricci flow $(\omega_{\operatorname{smooth}}(t))_{t> 0}$ is a $(C,B)-$bounded solution to K\"ahler-Ricci flow, i.e $(\omega_{\operatorname{smooth}}(t))_{t> 0}\in\mathcal{N}_C^B$.
        \item As $t$ tends to 0, $\omega_{\operatorname{smooth}}(t)$ converges locally and smoothly to $\omega_{\operatorname{sing}}$.
    \end{enumerate}
\end{lemma}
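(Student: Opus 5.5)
The plan is to rerun the proof of Corollary \ref{singular data}, this time tracking how every constant depends on the data. Fix $\omega_{\operatorname{sing}}=\omega_\mathcal{C}+i\partial\bar\partial\varphi_{\operatorname{sing}}\in\mathcal{K}^b_\sigma$ with $\sigma\le\sigma_0$. For $s>0$ and a large fixed $N$, consider the smooth approximants
\begin{equation*}
\omega_0^s=\omega+i\partial\bar\partial\bigl(\chi(rN^{-1})\varphi_{\operatorname{sing}}(s)\bigr),\qquad \varphi_{\operatorname{sing}}(s)=\tfrac{1}{s}\Phi_{1/s}^*\varphi_{\operatorname{sing}}.
\end{equation*}
Because the bounds in Definition \ref{the set Kappa} are scale invariant, $\varphi_{\operatorname{sing}}(s)$ satisfies them with the same $\sigma$ and $b$ for every $s$. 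The computation in the proof of Corollary \ref{singular data} then gives three things uniformly in $s$: $\operatorname{BL}_\infty(g_0^s,g)\le\sigma\le a$ once $\sigma_0\le a$; the bound $d_g(p,\cdot)^k|(\nabla^g)^kg_0^s|_g\le C_k(b,N)$; and $JX$-invariance. So $g_0^s$ is $a$-close to $g$, with the constants in Definition \ref{a closeness} depending only on $(\sigma_0,b)$.

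The key step, and the main obstacle, is to check that the constants produced in Section \ref{estimation of KRF} depend only on this quantitative closeness data, and not on $g_0^s$ itself. The paper notes this in passing, but I would verify it along the chain of estimates.
\begin{enumerate}
\item Proposition \ref{quantification of initial data}: $C$ depends on $\sup_{\{f\le\lambda_0\}}|\psi_0|$ and $\sup_{\{f\le\lambda_0\}}|X\cdot\psi_0|$. These are controlled by $\sigma,N$ since $\psi_0=\chi\varphi_{\operatorname{sing}}(s)$ with $|\varphi_{\operatorname{sing}}(s)|\le\sigma r^2$.
\item The margin $\varepsilon$ in the bi-Lipschitz bound is uniform. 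I would simply replace $a$ by a fixed $a$ with $\sigma_0<a$, so that $\varepsilon=a-\sigma_0$.
\item Pseudolocality (Proposition \ref{perelman+shi}): the threshold $\lambda$ and the constants depend only on the local geometry of $g_0^s$ at infinity, which is uniformly controlled.
\item The maximum-principle constants $C_4,C_7$ and the $C^2$ and higher-order estimates depend only on the preceding bounds and on $(M,g,X)$.
\end{enumerate}
The conclusion is that the flows $\omega_s(t)$ satisfy Theorem \ref{main estimate thm} with constants $C,\{C_k\}$ depending only on $(\sigma_0,b)$.

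Next, rescale by setting $\omega^s(t)=s\Phi_s^*\omega_s(t/s)$. This flow satisfies
\begin{equation*}
C^{-1}g(t+s)\le g^s(t)\le Cg(t+s),
\end{equation*}
together with weighted derivative bounds relative to $g(t+s)$. By Arzelà–Ascoli, a subsequence $s_j\to0$ converges locally smoothly on $M\times(0,\infty)$ to a flow $\omega_{\operatorname{smooth}}(t)$. Passing to the limit in these bounds gives $C^{-1}g(t)\le g_{\operatorname{smooth}}(t)\le Cg(t)$. It also gives $(\sqrt t+d_{g(t)}(p,\cdot))^k|(\nabla^{g(t)})^kg_{\operatorname{smooth}}(t)|_{g(t)}\le C_k$, which translates to the $t\Phi_t^*f$ weight via \eqref{eq: tft is aprabolic distance}. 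This fixes $B$.

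It remains to check the other two defining properties of $\mathcal{N}_C^B$. $JX$-invariance passes to the limit. For the potential, $\omega^s(t)=\omega(t+s)+i\partial\bar\partial\,\mathcal{D}(s)\varphi$, and $\mathcal{D}(s)\varphi$ converges by the growth bounds, exactly as at the end of the proof of Theorem \ref{relation between two limit sets}. Finally, the convergence $\omega_{\operatorname{smooth}}(t)\to\omega_{\operatorname{sing}}$ locally smoothly on $M\setminus E$ follows from the argument of Claim \ref{claim 2}. On each annulus the curvature is bounded uniformly in $s$, which gives $|\omega^s(t)-\omega^s(0)|_{C^k}\le C't$ uniformly. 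Meanwhile $\omega^s(0)=s\Phi_s^*\omega_0^s\to\omega_{\operatorname{sing}}$ locally smoothly on $\mathcal{C}\setminus\{o\}$, since the cutoff is pushed to the origin.
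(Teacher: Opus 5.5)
Your proposal is correct and follows essentially the paper's route. The paper obtains this lemma in one line from Corollary \ref{singular data} (the same approximants $\omega_0^s$, the rescaling $s\Phi_s^*\omega_s(t/s)$, Arzelà–Ascoli, and the Claim \ref{claim 2} argument) together with the estimates of Theorem \ref{main estimate thm}; your explicit check that the constants depend only on $(\sigma_0,b)$, via a fixed margin below $a$, spells out the uniformity the paper simply asserts.
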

Indeed, we can also prove that $(\omega_{\operatorname{smooth}}(t))_{t> 0}$ is the unique bounded solution to K\"ahler-Ricci flow emerging from $\omega_{\operatorname{sing}}$ in Lemma \ref{lemma: well-poseness of singular data}.
\begin{prop}\label{prop: uniqueness of bounded sols}
    Let $(\omega_{\operatorname{bd},i}(t))_{t>0}, i=1,2$ be two bounded solutions to K\"ahler-Ricci flow such that $\omega_{\textnormal{bd},1}(t)-\omega_{\textnormal{bd},2}(t)$ locally converges to 0 as $t$ tends to $0$, then we have $\omega_{\operatorname{bd},1}(t)=\omega_{\operatorname{bd},2}(t)$ for all $t>0$.
\end{prop}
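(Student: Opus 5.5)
We reduce the statement to a parabolic comparison for the difference of Kähler potentials and close it with a maximum principle adapted to the singular initial time $t=0$. The bounds come from the $(C,B)$-boundedness in the definition of $\mathcal{N}_C^B$. By condition (1) of that definition we write $\omega_{\operatorname{bd},i}(t)=\omega(t)+i\partial\bar\partial\varphi_i(t)$ with $JX$-invariant potentials. After adding a function of time only, each $\varphi_i$ solves the Monge--Ampère flow
\begin{equation*}
\frac{\partial}{\partial t}\varphi_i=\log\frac{(\omega(t)+i\partial\bar\partial\varphi_i)^n}{\omega(t)^n}.
\end{equation*}
This normalisation is the same as in Proposition \ref{MA flow}. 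It works because $\partial_t(i\partial\bar\partial\varphi_i)=-\Ric(\omega_{\operatorname{bd},i})+\Ric(\omega(t))=i\partial\bar\partial\log(\omega_{\operatorname{bd},i}^n/\omega(t)^n)$, and a $JX$-invariant pluriharmonic function of quadratic growth is, by the Liouville-type lemma of \cite{longteng1}, of the form $c\log r+b$. Such terms are then absorbed by exploiting the growth bounds.

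Set $u=\varphi_1-\varphi_2$. Then $u$ satisfies the linear equation
\begin{equation*}
\partial_t u=\int_0^1\Delta_{\omega_s(t)}u\,ds=:\mathcal{L}_t u,\qquad \omega_s=s\omega_{\operatorname{bd},1}+(1-s)\omega_{\operatorname{bd},2}.
\end{equation*}
The operator $\mathcal{L}_t$ is uniformly elliptic with respect to $g(t)$ by $C^{-1}g(t)\le g_{\operatorname{bd},i}\le Cg(t)$. Passing to self-similar variables as in Definition \ref{self-similar variables}, $\bar u(\tau)=e^{-\tau}\Phi_{e^{-\tau}}^*u(e^\tau)$ solves
\begin{equation*}
\partial_\tau\bar u=\bar{\mathcal{L}}\bar u+\tfrac{X}{2}\cdot\bar u-\bar u,\qquad \tau\in\R,
\end{equation*}
where $\bar{\mathcal L}$ is uniformly elliptic with respect to $g$. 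We also need growth control. As in Proposition \ref{quantification of initial data} and Proposition \ref{rough estimates}, integrating the bi-Lipschitz bounds along $X$ gives $|\bar u|\le C f+C$ uniformly in $\tau$. The hypothesis that $\omega_{\operatorname{bd},1}-\omega_{\operatorname{bd},2}\to 0$ locally as $t\to0$ is, in self-similar variables, the statement that $i\partial\bar\partial\bar u(\tau)\to0$ on $\{f\ge \Lambda e^{\tau}\}$-type regions as $\tau\to-\infty$.

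The plan is a barrier argument. Consider $w=\bar u-\epsilon f_{\operatorname{bar}}$, where the barrier is built from $f$ using $(\Delta_\omega+\frac X2)f=f$ and uniform ellipticity, so that $(\partial_\tau-\bar{\mathcal L}-\frac X2+1)f\ge c f$ away from a compact set. The maximum principle on the noncompact region is justified by properness of $f$ and the quadratic growth of $\bar u$. This gives $\sup\bar u(\tau)\le e^{-(\tau-\tau_0)}\sup_{\{f\le R\}}|\bar u(\tau_0)|+\epsilon(\dots)$. The damping term $-\bar u$ produces the decay factor $e^{-(\tau-\tau_0)}$. Letting $\tau_0\to-\infty$, the quantity $\bar u(\tau_0)$ near the core corresponds, after unscaling, to $u(t_0)$ on $\{r^2\le Rt_0\}$. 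There the smallness comes from the convergence at the cone together with the diameter control of Claim \ref{claim 3}. We then conclude $\bar u\equiv$ const, hence $\omega_{\operatorname{bd},1}=\omega_{\operatorname{bd},2}$.

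The main obstacle is the passage from local convergence of the forms to smallness of the \emph{potentials} uniformly on the expanding region. Local convergence on $M\setminus E$ controls $i\partial\bar\partial u$ only on compacts away from $E$, while near $E$ at small $t$ everything is controlled only by the bounded-geometry estimates. I would first try to handle this by a contradiction/compactness argument. Suppose $\sup|\bar u(\tau)|/f\ge\eta>0$ along $\tau_k\to-\infty$. Rescale via $\phi^{\tau_k}$, using the compactness of $\mathcal N_C^B$ in the topology $\mathcal T$ through the derivative bounds $B_k$. The limit is a pair of bounded flows whose difference potential is $\mathcal{D}$-invariant in the limit and whose forms agree at $t=0$. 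The Liouville lemma for $JX$-invariant pluriharmonic functions plus the growth bound then forces the limiting difference to vanish, contradicting $\eta>0$.
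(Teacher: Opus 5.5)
There is a genuine gap, and it sits exactly where you place the ``main obstacle'': nothing in your argument controls the \emph{values} of the potential difference uniformly as $\tau\to-\infty$. The three devices meant to supply this control each fail.
(i) Integrating the bi-Lipschitz bounds along $X$ controls $\bar u(\tau)$ only modulo its values on a fixed compact set, and those values are not uniform in $\tau$. Your normalisation leaves $u$ free up to an additive constant $c$, which contributes $ce^{-\tau}$ to $\bar u$.
(ii) Even granting $|\bar u|\le Cf+C$, growth like $f$ is critical for your operator. One has $(\partial_\tau-\bar{\mathcal L}-\frac X2+1)f=R_\omega+n-\bar{\mathcal L}f$, which is bounded rather than $\ge cf$, and $f^p$ with $p>1$ is a subsolution near infinity. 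So an $\epsilon f$-barrier cannot dominate $\bar u$ at infinity, and replacing $\sup_M\bar u(\tau_0)$ by $\sup_{\{f\le R\}}|\bar u(\tau_0)|$ is unjustified. Claim \ref{claim 3} bounds diameters, not values of $u$.
(iii) The compactness argument is circular. A $\phi^{\tau_k}$-limit is again a pair of bounded flows with common initial data, and proving that they coincide is the statement itself. Nothing makes the limit $\mathcal D$-invariant, since sequential limits need not be self-similar (cf.\ Theorem \ref{universal initial data}). The hypothesis $\sup|\bar u(\tau_k)|/f\ge\eta$ is also sensitive to the unnormalised constant.

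The missing observation is elementary. Since both flows are $C$-bi-Lipschitz to $g(t)$, you have $|\partial_tu|=|\log(\omega_{\operatorname{bd},1}^n/\omega_{\operatorname{bd},2}^n)|\le 2n\log C$ on all of $M\times(0,\infty)$. Hence $u(t)$ converges uniformly on $M$ to a continuous $u(0)$ with $|u(t)-u(0)|\le Ct$. The hypothesis gives $i\partial\bar\partial u(0)=0$ on $M\setminus E$. The Liouville lemma, together with $JX$-invariance and boundedness near $E$, then makes $u(0)$ constant, and you normalise it to $0$. So $|u(t)|\le Ct$ globally; this is the paper's route.

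After that, no self-similar variables or compactness are needed. Work on $M\times[s,T]$ with $f_{\varphi,2}=t\Phi_t^*f+\frac X2\cdot\varphi_2$, which solves $(\partial_t-\Delta_{\omega_{\operatorname{bd},2}})f_{\varphi,2}=0$ and is comparable to $t\Phi_t^*f$, hence proper. The inequality $\partial_tu\le\Delta_{\omega_{\operatorname{bd},2}}u$ makes $u-\delta f_{\varphi,2}-\delta t$ a strict subsolution. It tends to $-\infty$ at spatial infinity because $u$ is bounded there, so $\sup u\le\sup_M u(s)\le Cs$. The symmetric bound and $s\to0$ give $u\equiv0$. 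Your averaged operator $\mathcal L_t$ with barrier $\delta(t\Phi_t^*f+Kt)$ would work equally well once $|u|\le Ct$ is known.

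A minor point: in the normalisation step, the $c\log r$ terms are excluded by smoothness across $E$, not by growth at infinity.
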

\begin{proof}
If $\dim_\C M=1$, then the uniqueness of 2D-Ricci flow in \cite{PeacheyTopping} ensures directly that $\omega_{\operatorname{bd},1}(t)=\omega_{\operatorname{bd},2}(t).$
  
    Now we assume that $\dim_\C M\ge 2$. Let $\varphi_i,i=1,2$ be the K\"ahler potentials such that
    \begin{equation*}
        \omega_{\operatorname{bd},i}(t)=\omega(t)+i\partial\bar\partial\varphi_i(t).
    \end{equation*}
    Let $C>1$ be the constant such that for $i=1,2,t>0$, we both have
    \begin{equation*}
        C^{-1}g(t)\le  g_{\operatorname{bd},i}(t)\le C g(t).
    \end{equation*}
    Define $f_{\varphi,i}(t):=t\Phi^*_tf+\frac{X}{2}\cdot\varphi_i(t)$.
    Similar computations as in \cite[Proposition 4.4]{longteng2} imply that  
    \begin{equation*}
       C^{-1}t\Phi^*_tf\le f_{\varphi,i}(t)\le C t\Phi^*_tf,
    \end{equation*}
    holds for all $t>0$. For $i=1,2$, by the K\"ahler-Ricci flow equation, we have
    \begin{equation*}
       i\partial\bar\partial\left( \frac{\partial}{\partial t}\varphi_i(t)-\log\frac{\omega_{\operatorname{bd},i}(t)^n}{\omega(t)^n}\right)=0.
    \end{equation*}
      The Liouville lemma in \cite{longteng1} together with the existence of Killing vector field $JX$  show that
    \begin{equation*}
        \frac{\partial}{\partial t}\varphi_i(t)-\log\frac{\omega_{\operatorname{bd},i}(t)^n}{\omega(t)^n}=c_i(t).
    \end{equation*}
    Here the RHS $(c_i(t))_{t>0}$ is a function that only depends on $t$. Take $C_i(t)$ as the primitive function of $c_i(t)$ and define $\varphi_i(t)-C_i(t)$ as the new $\varphi_i(t)$ then we get
    \begin{equation*}
        \frac{\partial}{\partial t}\varphi_i(t)-\log\frac{\omega_{\operatorname{bd},i}(t)^n}{\omega(t)^n}=0.
    \end{equation*}
  Let $\varphi:=\varphi_1-\varphi_2$, then we have
    \begin{equation*}
        \frac{\partial}{\partial t}\varphi(t)-\log\frac{\omega_{\operatorname{bd},1}(t)^n}{\omega_{\operatorname{bd},2}(t)^n}=0.
    \end{equation*}
    The above equation implies that when $t$ tends to 0, the function $\varphi(t)$ converge to a smooth function $\varphi(0)$ on $\mathcal{C}\setminus \{o\}$. In particular, since $(\omega_{\operatorname{bd},i}(t))_{t>0}$ has the same initial condition, then $i\partial\bar\partial \varphi(0)=0$. Use the Liouville lemma in \cite{longteng1}, since $\dim_\C M\ge2$, we conclude that $\varphi(0)$ is constant $c$ on $\mathcal{C}\setminus\{o\}$. Define $\varphi-c$ as the new $\varphi$ then we get
    \begin{equation*}
         \frac{\partial}{\partial t}\varphi(t)-\log\frac{\omega_{\operatorname{bd},1}(t)^n}{\omega_{\operatorname{bd},2}(t)^n}=0,
    \end{equation*}
    and $\varphi(t)$ converges to $0$ locally smoothly on $\mathcal{C}\setminus\{o\}$ as $t$ tends to 0.

    Observe that $(\omega_{\operatorname{bd},i})_{t>0}$ is a bounded solution, therefore, there exists a uniform constant $C>0$ such that for all $t>0$
    \begin{equation*}
        \left|\frac{\partial}{\partial t}\varphi(t)\right|\le C.
    \end{equation*}
    Since $\varphi(0)=0$ on $\mathcal{C}\setminus\{o\}$, by integration and the density of $M\setminus E$ on $M$, we have
    \begin{equation*}
        |\varphi(t)|\le Ct,
    \end{equation*}
    holds for all $t>0$.

    For all $0<s<T<\infty$, we consider $\Omega_{s,T}:=\{(x,t)\ |\ x\in M, t\in [s,T]\}$. First, we show
    \begin{equation*}
       \left( \frac{\partial}{\partial t}-\Delta_{\omega_{\operatorname{bd},i}}\right)f_{\varphi,i}=0.
    \end{equation*}
    Indeed, we compute
    \begin{equation*}
        \begin{split}
             \left( \frac{\partial}{\partial t}-\Delta_{\omega_{\operatorname{bd},i}}\right)f_{\varphi,i}&=\Phi_t^*f-\frac{X}{2}\cdot \Phi_t^*f+\frac{X}{2}\cdot \frac{\partial}{\partial t}\varphi_i-\Delta_{\omega_{\operatorname{bd},i}}f_{\varphi,i}\\
             &=\Phi_t^*f-\frac{X}{2}\cdot \Phi_t^*f+\Delta_{\omega_{\operatorname{bd},i}}f_{\varphi,i}-\Delta_{\omega(t)}t\Phi_t^*f-\Delta_{\omega_{\operatorname{bd},i}}f_{\varphi,i}\\
             &=\Phi_t^*(f-|\partial f|_g^2-\Delta_\omega f)\\
             &=0.
        \end{split}
    \end{equation*}
    Observe that, by the elementary inequality, we have
    \begin{equation*}
       \Delta_{\omega_{\operatorname{bd},1}(t)}\varphi(t)\le  \frac{\partial}{\partial t}\varphi(t)\le \Delta_{\omega_{\operatorname{bd},2}(t)}\varphi(t).
    \end{equation*}
    For all $\delta>0$, consider the function $\varphi-\delta f_{\varphi,2}-\delta t$ on $\Omega_{s,T}$. On the one hand, we have
    \begin{equation*}
         \left( \frac{\partial}{\partial t}-\Delta_{\omega_{\operatorname{bd},2}}\right)(\varphi-\delta f_{\varphi,2}-\delta t)\le -\delta<0.
    \end{equation*}
    On the other hand, the function $\varphi-\delta f_{\varphi,2}-\delta t$ tends to $-\infty$ uniformly at spatial infinity since $|\varphi|\le Ct\le CT$. Then by the maximum principle, for all $\delta>0$, we have
    \begin{equation*}
        \max_{\Omega_{s,t}}(\varphi-\delta f_{\varphi,2}-\delta t)\le \max_M(\varphi(s)-\delta f_{\varphi,2}(s)-\delta s)\le \sup_{M}\varphi(s)\le Cs.
    \end{equation*}
    Let $\delta$ tend to 0, we get
    \begin{equation*}
        \sup_{\Omega_{s,T}}\varphi\le Cs.
    \end{equation*}
    Respectively, by applying the above argument to $\varphi+\delta f_{\varphi,1}+\delta t$, we get
    \begin{equation*}
        \inf_{\Omega_{s,T}}\varphi\ge -Cs.
    \end{equation*}
    Let $T$ tend to $\infty$ and let $s$ tend to 0, then it follows that $\varphi(t)\equiv0$ for all $t>0$. Thus, we have
    \begin{equation*}
        \omega_{\operatorname{bd},1}(t)-\omega_{\operatorname{bd},2}(t)=0,
    \end{equation*}
    holds for all $t>0$ as expected.
    
\end{proof}

\begin{corollary}\label{coro: definition of F}
   Let $\sigma_0>0$ be as in Lemma \ref{lemma: well-poseness of singular data}, then for any $b=\{b_k>0\}_{k\in \N^*}$, $\sigma\le\sigma_0$, there exist constants $C>1, B=\{B_k>0\}_{k\in\N^*}$ and a natural injection:
   \begin{equation*}
       F:\mathcal{K}_\sigma^b\mapsto\mathcal{N}_C^B,
   \end{equation*}
   such that for all $\omega_{\operatorname{sing}}$, $F(\omega_{\operatorname{sing}})=(\omega_{\operatorname{smooth}}(t))_{t>0}$ is the unique bounded solution to K\"ahler-Ricci flow with initial data $\omega_{\operatorname{sing}}$.

\end{corollary}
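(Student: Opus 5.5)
The map is assembled from Lemma \ref{lemma: well-poseness of singular data}, and its well-definedness and injectivity come from Proposition \ref{prop: uniqueness of bounded sols}.

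\emph{Definition of $F$.} Fix $b$ and $\sigma\le\sigma_0$. Lemma \ref{lemma: well-poseness of singular data} provides constants $C>1$ and $B=\{B_k\}$, depending only on $\sigma_0$ and $b$. For each $\omega_{\operatorname{sing}}\in\mathcal{K}^b_\sigma$ it also provides an immortal flow $(\omega_{\operatorname{smooth}}(t))_{t>0}\in\mathcal{N}_C^B$ converging locally smoothly to $\omega_{\operatorname{sing}}$ on $\mathcal{C}\setminus\{o\}$ as $t\to0^+$. The constants do not depend on the particular $\omega_{\operatorname{sing}}$, because the constants in Theorem \ref{main estimate thm} depend only on $a$ and on the bounds $C_k$ of the initial data, and those are controlled by $\sigma$ and $b$ through the construction in Corollary \ref{singular data}. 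Hence a single target space $\mathcal{N}_C^B$ works for all of $\mathcal{K}^b_\sigma$.

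\emph{Well-definedness.} The construction in Corollary \ref{singular data} involves choosing a subsequence $s_k\to0$, so a priori the resulting flow might depend on that choice. Suppose two flows in $\mathcal{N}_C^B$ both emerge from $\omega_{\operatorname{sing}}$. Their difference tends to $0$ locally as $t\to0$, so Proposition \ref{prop: uniqueness of bounded sols} shows they coincide. Therefore $F(\omega_{\operatorname{sing}})$ is unambiguously defined as the unique bounded solution with initial datum $\omega_{\operatorname{sing}}$. Since every element of $\mathcal{N}_C^B$ is in particular a bounded solution, this also yields the uniqueness clause of the statement.

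\emph{Injectivity.} Suppose $F(\omega^1_{\operatorname{sing}})=F(\omega^2_{\operatorname{sing}})$. By property (2) of Lemma \ref{lemma: well-poseness of singular data}, the common flow converges locally smoothly on $\mathcal{C}\setminus\{o\}$ both to $\omega^1_{\operatorname{sing}}$ and to $\omega^2_{\operatorname{sing}}$. Local smooth limits are unique, so $\omega^1_{\operatorname{sing}}=\omega^2_{\operatorname{sing}}$.

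\emph{Main obstacle.} The only delicate point is the uniformity of $C$ and $B$ over $\mathcal{K}^b_\sigma$, which is needed so that all the flows land in the same $\mathcal{N}_C^B$. To verify it, I would trace the proof of Corollary \ref{singular data}:
\begin{itemize}
\item the approximating initial metrics $\omega_0^s$ are $a$-close to $\omega$, with $\operatorname{BL}_\infty\le\sigma$;
\item their derivative bounds $C_k$ are controlled by $b_k$ and by the fixed cutoff scale $N$;
\item the estimates of Section \ref{estimation of KRF} therefore give constants depending only on these data;
\item these constants survive the rescaling $s\Phi_s^*\omega_s(t/s)$ and the limit $s\to0$.
\end{itemize}
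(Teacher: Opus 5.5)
Your proposal is correct and follows the same route as the paper, which takes existence of a flow in $\mathcal{N}_C^B$ from Corollary \ref{singular data} (equivalently Lemma \ref{lemma: well-poseness of singular data}) and uniqueness from Proposition \ref{prop: uniqueness of bounded sols}. You go slightly further by spelling out injectivity via uniqueness of the $t\to0^+$ limit, which the paper leaves implicit; the uniformity of $C$ and $B$ that you flag as the main obstacle is already part of the statement of Lemma \ref{lemma: well-poseness of singular data}, so no further tracing is needed.
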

\begin{proof}
    Since $\sigma\le\sigma_0$, then thanks to Corollary \ref{singular data}, for each $\omega_{\operatorname{sing}}\in \mathcal{K}_{\sigma}^b$, there is a bounded solution to K\"ahler-Ricci flow $(\omega_{\operatorname{smooth}}(t))_{t>0}$ with initial data $\omega_{\operatorname{sing}}$. Due to Proposition \ref{prop: uniqueness of bounded sols}, this flow is unique.

\end{proof}

\begin{theorem}[Topological semi-conjugacy]
    The map $F: \mathcal{K}_{\sigma}^b\mapsto\mathcal{N}^B_{C}$ in Lemma \ref{coro: definition of F} is continuous. Moreover, $F$ is a topological semi-conjugacy between $(\mathcal{K}^b_{\sigma},d_{\mathcal{C}},(\phi^\tau_{\mathcal{C}})_{\tau\in\R})$ and $(\mathcal{N}^b_{\sigma},d,(\phi^\tau)_{\tau\in\R})$, i.e, $\forall \tau\in\R$ the following diagram commute:
\begin{equation*}
    \begin{tikzcd}
\mathcal{K}_{\sigma}^b \arrow{r}{\phi_{\mathcal{C}}^\tau} \arrow{d}{F}
& \mathcal{K}_{\sigma}^b \arrow{d}{F} \\
\mathcal{N}_C^B \arrow{r}{\phi^\tau}
& \mathcal{N}_C^B
\end{tikzcd}
\end{equation*}
\end{theorem}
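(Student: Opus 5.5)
The proof has two parts: first the commutation $F\circ\phi_{\mathcal C}^\tau=\phi^\tau\circ F$, then continuity of $F$ from $(\mathcal K_\sigma^b,d_{\mathcal C})$ to $(\mathcal N_C^B,d)$. Commutation uses uniqueness (Proposition \ref{prop: uniqueness of bounded sols}); continuity uses compactness of $\mathcal N_C^B$ together with the same uniqueness.

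\emph{Commutation.} Fix $\omega_{\operatorname{sing}}\in\mathcal K_\sigma^b$ and set $(\omega_{\operatorname{smooth}}(t))_{t>0}=F(\omega_{\operatorname{sing}})$.

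1. I first check that $\mathcal K_\sigma^b$ is invariant under $\phi^\tau_{\mathcal C}$. Since $\Phi_s$ is generated by $-X/2s$ and $d\pi(X)=r\partial_r$, the pullback $e^{-\tau}\Phi_{e^{-\tau}}^*$ is a pure cone dilation. It preserves $\omega_{\mathcal C}$ and $JX$-invariance. It sends $\varphi_{\operatorname{sing}}$ to a potential for which the scale-invariant quantities $r^{-2}|\varphi|$, $r^{-1}|\nabla\varphi|$ and $r^k|\nabla^{k+2}\varphi|$ are unchanged. So the same $\sigma$ and $b$ work.

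2. By the proof of the previous proposition, $\phi^\tau(\omega_{\operatorname{smooth}})$ lies in $\mathcal N_C^B$ and is again a K\"ahler--Ricci flow.

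3. As $t\to0$, $\phi^\tau(\omega_{\operatorname{smooth}})(t)=e^{-\tau}\Phi^*_{e^{-\tau}}\omega_{\operatorname{smooth}}(e^\tau t)$ converges locally smoothly to $\phi^\tau_{\mathcal C}\omega_{\operatorname{sing}}$, because the pullback is a fixed diffeomorphism of $M\setminus E$.

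4. Proposition \ref{prop: uniqueness of bounded sols} then gives $\phi^\tau(F(\omega_{\operatorname{sing}}))=F(\phi^\tau_{\mathcal C}\omega_{\operatorname{sing}})$.

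\emph{Continuity.} Both metrics are sequential, so it suffices to take $\omega^n_{\operatorname{sing}}\to\omega_{\operatorname{sing}}$ in $d_{\mathcal C}$ and show $F(\omega^n_{\operatorname{sing}})\to F(\omega_{\operatorname{sing}})$ in $d$. I argue by subsequences.

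1. The family $F(\omega^n_{\operatorname{sing}})$ lies in $\mathcal N_C^B$, with $C$ and $B$ uniform by Lemma \ref{lemma: well-poseness of singular data}. These bounds give uniform $C^k$ control on every compact subset of $M\times(0,\infty)$. By Arzel\`a--Ascoli, every subsequence has a further subsequence converging locally smoothly to some flow $\omega_\infty(t)$.

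2. The limit stays in $\mathcal N_C^B$, since the closed inequalities and $JX$-invariance pass to the limit. The potentials also pass to the limit after normalisation, exactly as in the end of the proof of Theorem \ref{relation between two limit sets}.

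3. Local smooth convergence on each rectangle $\mathscr R^{L}_{[t_0,t_1]}$ is convergence in $\mathcal T$, hence in $d$ by the equivalence of topologies.

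4. It remains to show $\omega_\infty=F(\omega_{\operatorname{sing}})$. By Proposition \ref{prop: uniqueness of bounded sols}, it suffices to show that $\omega_\infty(t)\to\omega_{\operatorname{sing}}$ locally on $M\setminus E$ as $t\to0$. Since every subsequence then has a sub-subsequence with the same limit $F(\omega_{\operatorname{sing}})$, the whole sequence converges.

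\emph{Main obstacle.} The key step is the uniform-in-$n$ estimate
\[
|\omega^n_{\operatorname{smooth}}(t)-\omega^n_{\operatorname{sing}}|_{g_{\mathcal C}}\le C_K t\quad\text{on a compact } K\subset M\setminus E.
\]
Given this, a triangle inequality in the style of Claim \ref{claim 2} finishes the argument. We have $|\omega_\infty(t)-\omega_{\operatorname{sing}}|\le |\omega_\infty(t)-\omega^n_{\operatorname{smooth}}(t)|+C_Kt+|\omega^n_{\operatorname{sing}}-\omega_{\operatorname{sing}}|$ on $K$. Letting $n\to\infty$ and then $t\to0$ makes the right-hand side vanish.

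For the estimate I would use the $(C,B)$-bounds on $K$, where $t\Phi_t^*f$ is comparable to $r^2$ once $t\ll r^2$. These give $|\Ric(\omega^n_{\operatorname{smooth}}(t))|\le C_K$ uniformly in $n$ for $t\le t_K$. Integrating the flow equation in $t$ then gives the Lipschitz bound in $t$.

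The subtle point is that the convergence $\omega^n_{\operatorname{smooth}}(t)\to\omega^n_{\operatorname{sing}}$ itself comes from the construction in Corollary \ref{singular data}. So I need the curvature bound there to be uniform in the initial datum. I would obtain it from pseudolocality applied uniformly over $\mathcal K_\sigma^b$, which controls the relevant constants by $(\sigma,b)$ only.
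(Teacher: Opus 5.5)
Your proposal is correct. The commutation half matches the paper's argument (dilation invariance plus Proposition~\ref{prop: uniqueness of bounded sols}). For continuity, however, you take a genuinely different route.

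The paper argues quantitatively:
\begin{itemize}
\item It writes $\omega_{\operatorname{sing}}-\omega'_{\operatorname{sing}}=i\partial\bar\partial\varphi_0$ and bounds $\varphi_0$ by integrating along $X$, using $\delta$-closeness on an annulus $\mathscr A_{r_0,R_0}$ together with the $\mathcal K^b_\sigma$ bounds.
\item It controls $\partial_t\varphi$ of the relative potential on $\{r^2=R_0^2\}$ by quadratic curvature decay.
\item It runs a maximum principle with barriers $D\delta f_\varphi$ on $\{r^2\le R_0^2\}\times[0,t_1]$, chooses $r_0=\delta$ and $R_0^2=t_1^2\delta^{-1}$, and interpolates to get $|\omega_1(t)-\omega_1'(t)|_{\omega(t)}\le C\delta^{1/3}$.
\end{itemize}
You replace all of this with compactness of $\mathcal N_C^B$ plus uniqueness of bounded solutions. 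The paper's route gives an explicit modulus of continuity and uses uniqueness only to define $F$. Yours avoids the barrier construction, the boundary analysis down to $t=0$, and the interpolation step. The price is that it leans harder on Proposition~\ref{prop: uniqueness of bounded sols}, because you apply it to a subsequential limit flow.

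Two comments on details.

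First, the estimate you flag as the main obstacle needs no uniform pseudolocality. By Corollary~\ref{proper function with t}, $t\Phi_t^*f\ge r^2/2$ for \emph{all} $t>0$. So on $K=\pi^{-1}\{r_0^2\le r^2\le R_0^2\}$, the $(C,B)$-bounds together with $|\Rm(g(t))|_{g(t)}\le Cr^{-2}$ give $|\Ric(g^n_{\operatorname{smooth}}(t))|\le C_K$ for all $t>0$ and all $n$. Integrating yields $|\omega^n_{\operatorname{smooth}}(t)-\omega^n_{\operatorname{smooth}}(s)|\le C_K(t-s)$ for $0<s<t\le t_K$. Letting $s\to0$, using only the per-$n$ convergence from Lemma~\ref{lemma: well-poseness of singular data}, gives your uniform estimate.

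Second, to place $\omega_\infty$ in $\mathcal N_C^B$ you must produce a potential. The bounds used at the end of the proof of Theorem~\ref{relation between two limit sets} come from Theorem~\ref{main estimate thm}, not from membership in $\mathcal N_C^B$, so that reference does not suffice by itself. A clean fix is as follows. Let $\psi^n$ be the relative potential of $F(\omega^n_{\operatorname{sing}})$ with respect to $F(\omega_{\operatorname{sing}})$. Normalise it as in Proposition~\ref{prop: uniqueness of bounded sols}, so that $\partial_t\psi^n$ is the (uniformly bounded) log volume ratio and $\psi^n(t)\to\varphi^n_{\operatorname{sing}}-\varphi_{\operatorname{sing}}$ as $t\to0$. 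Then $|\psi^n(t)|\le 2\sigma r^2+Ct$ uniformly in $n$. Arzel\`a--Ascoli, combined with elliptic regularity for $\Delta_{g(t)}\psi^n=\tr_{\omega(t)}(\omega^n(t)-\omega_1(t))$, then yields a limiting potential.
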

\begin{proof}
    First, we verify that
    \begin{equation*}
        \phi^\tau\circ F=F\circ \phi_\mathcal{C}^\tau,\quad \forall \tau\in\R.
    \end{equation*}
    Let $\omega_{\operatorname{sing}}\in\mathcal{K}_{\sigma}^b$, let $\omega_{\operatorname{smooth}}(t)=F(\omega_{\operatorname{sing}})\in\mathcal{N}_C^B $, then it follows that
    \begin{equation*}
         \phi^\tau\circ F(\omega_{\operatorname{sing}})(t)=e^{-\tau}\Phi_{e^{-\tau}}^*\omega_{\operatorname{smooth}}(e^\tau t).
    \end{equation*}
    Observe that $(e^{-\tau}\Phi_{e^{-\tau}}^*\omega_{\operatorname{smooth}}(e^\tau t))_{t>0}$ is a bounded solution to K\"ahler-Ricci flow with initial data $e^{-\tau}\Phi_{e^{-\tau}}^*\omega_{\operatorname{sing}}=\phi^\tau_{\mathcal{C}}(\omega_{\operatorname{sing}})$, then we have
    \begin{equation*}
        F\circ \phi^\tau_{\mathcal{C}}(\omega_{\operatorname{sing}})(t)=e^{-\tau}\Phi_{e^{-\tau}}^*\omega_{\operatorname{smooth}}(e^\tau t)= \phi^\tau\circ F(\omega_{\operatorname{sing}})(t).
    \end{equation*}
    It suffices to show that $F$ is actually continuous.
   Let's fix $\omega_{\operatorname{sing}}\in \mathcal{K}^b_\sigma$, $F(\omega_{\operatorname{sing}})=\omega_{1}\in \mathcal{N}_C^B$, and an open neighborhood $U^\varepsilon(\omega_1,\mathscr{R}_{[t_0,t_1]}^L)$ of $\omega_1$. Let $\omega_{\operatorname{sing}}'\in\mathcal{K}^b_\sigma$ and $|\omega_{\operatorname{sing}}'-\omega_{\operatorname{sing}}|_{\omega_\mathcal{C}}\le \delta$ on $\mathscr{A}_{r_0,R_0}$ with $\delta>0, 0<r_0<R_0<\infty$ to be determined. Let $\varphi_{0}$ be a smooth function on $\mathcal{C}\setminus\{o\}$ such that
    \begin{equation}
        \omega_{\operatorname{sing}}-\omega_{\operatorname{sing}}'=i\partial\bar\partial \varphi_0.
    \end{equation}
    By the definition of $\mathcal{K}^b_\sigma$, there is a uniform constant $\kappa>0$ such that  
    \begin{equation*}
        |\varphi_0|\le\kappa r^2,\quad |\nabla^{g_{\mathcal{C}}}\varphi_0|^2_{g_{\mathcal{C}}}\le\kappa r^2.
    \end{equation*}
    On $\mathscr{A}_{r_0,R_0}$, we compute, since $\varphi_0$ is $JX-$invariant,
    \begin{equation*}
        \begin{split}
           \left| \frac{X}{2}\cdot(X\cdot \varphi_0)\right|=|i\partial\bar\partial \varphi_0(X,JX)|\le \delta r^2.
        \end{split}
    \end{equation*}
    By integration, for $r\le R_0$ we get
    \begin{equation*}
       |( X\cdot\varphi_0)(r)-(X\cdot\varphi)(r_0)|\le \delta (r^2-r_0^2)\le \delta r^2.
    \end{equation*}
    Hence,
    \begin{equation*}
        |X\cdot\varphi_0|\le \delta r^2+\kappa r_0^2\le \delta^2 r^2+\kappa r r_0.
    \end{equation*}
    Still, by integration, for $r\le R_0$ we get
    \begin{equation*}
        |\varphi_0-\varphi_0(r_0)|\le \delta\frac{r^2}{2}+\kappa r_0(r-r_0)\le \delta\frac{r^2}{2}+\kappa r_0r\le \delta r^2+2\kappa^2 \delta^{-1}r_0^2.
    \end{equation*}
    Hence,
    \begin{equation*}
        |\varphi_0|\le \delta r^2+(\kappa+2\kappa^2 \delta^{-1})r_0^2.
    \end{equation*}
We observe that the above bound also works on $\{r^2\le r_0^2\}$.
    
    Let $F(\omega_{\operatorname{sing}}')=(\omega_1'(t))_{t>0}$, let $\varphi(t)\in C^\infty(M\times (0,\infty))$ a smooth $JX-$invariant function that is defined as in Proposition \ref{prop: uniqueness of bounded sols} such that 
    \begin{equation*}
        \begin{split}
           &\frac{\partial}{\partial t}\varphi(t)=\log\frac{\omega_{1}'(t)^n}{\omega_1(t)^n},\\
           &\omega_1(t)=\omega_1(t)+i\partial\bar\partial \varphi(t).
        \end{split}
    \end{equation*}
    Since both $(\omega_1'(t)))_{t>0}$ and $(\omega_1(t))_{t>0}$ are bounded solutions, there is a uniform constant $C>0$ such that
    \begin{equation*}
        \left|\frac{\partial}{\partial t}\varphi(t)\right|\le C.
    \end{equation*}
    By integration, we claim that $\varphi(t)$ converges to a smooth function on $\mathcal{C}\setminus\{o\}$ and we have
    \begin{equation*}
        |\varphi(t)-\varphi(0)|\le Ct,
    \end{equation*}
    holds on $\mathcal{C}\setminus\{o\}$. In addition, by the initial condition, we also have
    \begin{equation*}
        i\partial\partial(\varphi(0)-\varphi_0)=0.
    \end{equation*}
    Since $\varphi(0)-\varphi_0$ is locally bounded around $o$, then by Liouville lemma in \cite{longteng1}, we conclude that $\varphi(0)-\varphi_0$ is constant. Without loss of generality, we could suppose this constant to be 0.

    Both of the flows have quadratic curvature decay, then on $\{r^2=R_0^2\}\times [0,t_1]$ with $t_1>0$ defined in $\mathscr{R}_{[t_0,t_1]}^L$, we have
    \begin{equation*}
        \left|\frac{\partial}{\partial t}\varphi(t)\right|\le \left|\log\frac{\omega_{\operatorname{sing}}'^n}{\omega_{\operatorname{sing}}^n}\right|+C\frac{t}{R_0^2}\le C(\delta +tR_0^{-2}),
    \end{equation*}
    holds for some uniform constant $C>0$. by integration, we have
    \begin{equation*}
        |\varphi(t)-\varphi_0|\le C(\delta t_1 +t_1^2R_0^{-2}).
    \end{equation*}
    Therefore, on $\{r^2=R_0^2\}\times [0,t_1]$ we get
    \begin{equation*}
        |\varphi(t)|\le \delta r^2+C\delta t_1 +Ct_1^2R_0^{-2}+(\kappa+2\kappa^2 \delta^{-1})r_0^2.
    \end{equation*}
    Let $f_{\varphi}$ and $f'_{\varphi}$ be the Hamiltonian of $X$ with respect to $\omega_1$ and $\omega_1'$. Then by previous estimates, there exists a uniform constant $D>1$ such that
    \begin{equation*}
        D^{-1}t\Phi_t^*f\le f_{\varphi}\le D t\Phi_t^*f;\quad D^{-1}t\Phi_t^*f\le f_{\varphi}'\le D t\Phi_t^*f.
    \end{equation*}
    Therefore, we conclude that on $\{r^2= R_0^2\}\times [0,t_1]\cup\{r^2\le R_0^2\}\times \{t=0\}$, we have
    \begin{equation*}
        \begin{split}
            &\varphi(t)\le D\delta f_\varphi(t)+C\delta t_1 +Ct_1^2R_0^{-2}+(\kappa+2\kappa^2 \delta^{-1})r_0^2;\\
            &\varphi(t)\ge -D\delta f_\varphi'(t)-C\delta t_1 -Ct_1^2R_0^{-2}-(\kappa+2\kappa^2 \delta^{-1})r_0^2.
        \end{split}
    \end{equation*}
    Then the maximum principle implies 
    \begin{equation*}
        \begin{split}
            &\varphi(t)-D\delta f_\varphi(t)\le C\delta t_1 +Ct_1^2R_0^{-2}+(\kappa+2\kappa^2 \delta^{-1})r_0^2; \\
            &\varphi(t)-D\delta f_\varphi(t)'\le -C\delta t_1 -Ct_1^2R_0^{-2}-(\kappa+2\kappa^2 \delta^{-1})r_0^2.
        \end{split}
    \end{equation*}
    we conclude that on $\{r^2\le R_0^2\}\times [0,t_1]$, we have
    \begin{equation*}
        |\varphi(t)|\le D^2 \delta t\Phi^*_tf+C\delta t_1 +Ct_1^2R_0^{-2}+(\kappa+2\kappa^2 \delta^{-1})r_0^2.
    \end{equation*}
    Now we take $r_0=\delta <1$ and $R_0^2=t_1^2\delta^{-1}$, then there exists a constant $C>0$ such that on $\{r^2\le R_0^2\}\times [0,t_1]$, we have
    \begin{equation*}
        |\varphi(t)|\le D^2 \delta t\Phi^*_tf+C\delta.
    \end{equation*}
    Then on $\{r^2\le R_0^2\}\times [t_0,t_1]$, we have
    \begin{equation*}
          |\varphi(t)|\le D^2 \delta t\Phi^*_tf+C\delta t t_0^{-1}.
    \end{equation*}
    By applying interpolation inequality in \cite{ChenHallgrenLucas}, fro $\delta$ sufficiently small, we get
    \begin{equation*}
        |\omega_1(t)-\omega_1(t)|_{\omega(t)}\le C\delta^{1/3},
    \end{equation*}
    holds on $\{r^2\le R_0^2/4\}\times [t_0,t_1]$. Here the constant $C$ depends on $t_0,t_1$. Taking $\delta<<1$ such that $C\delta^{1/3}\le \varepsilon$ and $R_0^2\ge L$ completes the proof.
\end{proof}
\subsection{Periodic points and Ricci breathers}
Let $\omega_1\in(\mathcal{N}_C^B,d,(\phi^\tau)_{\tau\in\R})$,
notice that
$\phi^\tau(\omega_1)=\omega_1$ for all $\tau\ge 0$ if and only if $(M,\omega_(1),X)$ is an expanding K\"ahler-Ricci soliton. It turns out that the fixed points of $(\phi^\tau)_{\tau\ge 0}$ are self-similar solutions to K\"ahler-Ricci flow, but the set of periodic points contains more elements.
\begin{theorem}\label{theorem non breathers}
   For each $\lambda> 1$, on the AC gradient K\"ahler-Ricci expander $(M,g,X)$ there are non-trivial Ricci breathers, which are immortal K\"ahler-Ricci flow with type III singularity, such that each $\tilde \omega(t)_{t>0}$ of them satisfies
   \begin{equation*}
       \lambda\Phi_\lambda^*\tilde \omega\left(\frac{t}{\lambda}\right)=\tilde\omega(t).
   \end{equation*}
Here $\Phi_t$ is the flow of $-\frac{r\partial_r}{2t}:=-\frac{X}{2t}$ for $t>0$.
   
   In particular, $(\tilde \omega(t))_{t>0}$ is a periodic point of the dynamical system $(\mathcal{N}_C^B,d,(\phi^\tau)_{\tau\in\R})$.
\end{theorem}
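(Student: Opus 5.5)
The plan is to produce breathers as images under $F$ of periodic points of the cone dynamical system, then use the semi-conjugacy and the uniqueness result. Fix $\lambda>1$ and set $\tau_0:=\log\lambda$, so that $\phi^{-\tau_0}$ is exactly the map $\tilde\omega(\cdot)\mapsto \lambda\Phi_\lambda^*\tilde\omega(\cdot/\lambda)$. Since the dilations form a group, it suffices to construct $\omega_{\operatorname{sing}}\in\mathcal{K}^b_\sigma$ with $\sigma\le\sigma_0$ satisfying $\phi^{\tau_0}_{\mathcal C}(\omega_{\operatorname{sing}})=\omega_{\operatorname{sing}}$, i.e. $\lambda\Phi_\lambda^*\omega_{\operatorname{sing}}=\omega_{\operatorname{sing}}$, but such that $\omega_{\operatorname{sing}}$ is \emph{not} conical.

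I will use the cut-off construction from the proof of Theorem \ref{universal initial data}. Fix $\psi\in C^\infty_\xi(S;\R)$ with $|\psi|_{\operatorname{link}}$ small, and a smooth $\lambda$-periodic profile in $\log r^2$, namely $\eta(r)=\chi(\log r^2)$ with $\chi$ periodic of period $\log\lambda$ and non-constant. Set
\begin{equation*}
\varphi_{\operatorname{sing}}:=\epsilon\,\eta(r)\frac{r^2}{2}\psi,\qquad \omega_{\operatorname{sing}}:=\omega_{\mathcal C}+i\partial\bar\partial\varphi_{\operatorname{sing}}.
\end{equation*}
Because $\Phi_\lambda$ acts on $r^2$ as division by $\lambda$ and fixes the link coordinate, we get $\lambda\Phi_\lambda^*\varphi_{\operatorname{sing}}=\varphi_{\operatorname{sing}}$. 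The scale-invariant bounds on $r^{-2}|\varphi_{\operatorname{sing}}|$, $r^{-1}|\nabla\varphi_{\operatorname{sing}}|$ and $r^k|(\nabla^{g_{\mathcal C}})^{k+2}\varphi_{\operatorname{sing}}|$ follow because derivatives of $\eta$ in $r\partial_r$ are bounded. This puts $\omega_{\operatorname{sing}}\in\mathcal K^b_\sigma$ for $\epsilon$ small and suitable $b$, and $JX$-invariance holds since $\psi$ is $\xi$-invariant.

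Let $\tilde\omega:=F(\omega_{\operatorname{sing}})$, given by Corollary \ref{coro: definition of F}. By the semi-conjugacy theorem, $\phi^{\tau_0}\tilde\omega=F(\phi^{\tau_0}_{\mathcal C}\omega_{\operatorname{sing}})=F(\omega_{\operatorname{sing}})=\tilde\omega$, which is the required periodicity. Alternatively, one can apply Proposition \ref{prop: uniqueness of bounded sols} directly: both flows are bounded and have the same initial data. Immortality and the type III property come from the $(C,B)$-bounds together with Theorem \ref{relation between two limit sets} \ref{Theorem B i}.

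It remains to prove non-self-similarity, which I expect to be the main point. If $\tilde\omega$ were self-similar, then $\mathcal D(\mu)\tilde\omega=\tilde\omega$ for all $\mu>0$. By Lemma \ref{lemma: well-poseness of singular data}, its local smooth limit as $t\to0$, namely $\omega_{\operatorname{sing}}$, would then satisfy $\mathcal D'(\mu)\omega_{\operatorname{sing}}=\omega_{\operatorname{sing}}$ for all $\mu$, so $\mathcal L_X\omega_{\operatorname{sing}}=2\omega_{\operatorname{sing}}$. As in the proof of Corollary \ref{singleton}, this gives $i\partial\bar\partial(X\cdot\varphi_{\operatorname{sing}}-2\varphi_{\operatorname{sing}})=0$. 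The Liouville lemma then yields $X\cdot\varphi_{\operatorname{sing}}-2\varphi_{\operatorname{sing}}=c\log r+b$. Here the left side equals $\epsilon r^2(r\partial_r\eta)\psi/2$, which is $o(1)$ as $r\to0$, so $c=b=0$. This forces $(r\partial_r\eta)\psi\equiv0$, contradicting $\chi$ non-constant and $\psi\not\equiv0$.

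Infinitely many distinct breathers come from varying $\epsilon$ (or $\psi$). By injectivity of $F$, distinct non-conical initial data give distinct flows. The only delicate point is checking that $\omega_{\operatorname{sing}}$ is genuinely positive, which holds once $\epsilon$ is small relative to the $C^2$ norms of $\chi$ and $\psi$.
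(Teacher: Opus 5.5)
Your construction of the periodic flow is sound and is essentially the paper's: a perturbation of $\omega_{\mathcal C}$ that is periodic in $\log r^2$, followed by Corollary \ref{singular data} and Proposition \ref{prop: uniqueness of bounded sols} (equivalently, the semi-conjugacy). The gap is in the non-triviality step.

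You take ``self-similar'' to mean $\mathcal{D}(\mu)\tilde\omega=\tilde\omega$ for all $\mu>0$, i.e.\ self-similar with respect to the given field $X$. That only shows $\tilde\omega$ is not a fixed point of $\phi^\tau$. A non-trivial breather must not be the self-similar solution of \emph{any} K\"ahler--Ricci expander $(M,g_E,Y)$, with $Y$ an arbitrary (possibly non-gradient) real holomorphic field. This is exactly what the paper's proof rules out, and the two notions genuinely differ.

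Here is a counterexample to the inference. On $\C^2$ with the Gaussian expander, let $W$ generate $z_1\mapsto e^{i\theta}z_1$, and put $u=e^{-\frac{ik}{2}\log r^2}z_1\bar z_2$ and $\varphi=\epsilon\operatorname{Re}u$. Then $Xu=(2-ik)u$ and $Wu=iu$, so $Y=X+kW$ satisfies $Y(\frac{r^2}{2}+\varphi)=2(\frac{r^2}{2}+\varphi)$. The form $\omega_{\mathcal C}+i\partial\bar\partial\varphi$ has the following properties:
\begin{itemize}
\item it is $JX$-invariant and lies in $\mathcal{K}^b_\sigma$ for small $\epsilon$;
\item it is $\lambda$-periodic whenever $\frac{k}{2}\log\lambda\in 2\pi\mathbb{Z}$;
\item it is not $X$-conical, since $X\varphi-2\varphi=\epsilon k\operatorname{Im}u$ is not pluriharmonic.
\end{itemize}
Because $Y$-dilations are $X$-dilations composed with isometries of $g$, Proposition \ref{prop: uniqueness of bounded sols} shows its flow is the self-similar solution of an expander with field $Y$. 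So it is a periodic, non-fixed point of $\phi^\tau$ that is nonetheless a trivial breather.

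You therefore have to exclude a general $Y$, as the paper does.
\begin{itemize}
\item Letting $t\to0$ in $g_E(t)+t\Ric(g_E(t))=\frac12\mathcal{L}_Yg_E(t)$ gives $\frac12\mathcal{L}_Y\omega_{\operatorname{sing}}=\omega_{\operatorname{sing}}$.
\item Average $Y$ over the torus generated by $JX$, which preserves $\omega_{\operatorname{sing}}$. You may then assume $[Y,JX]=0$, hence $[Y,X]=0$ since $Y$ is real holomorphic. So $Y=aX+V$, with $a$ and $V$ dilation-invariant and $V$ tangent to the link.
\item With $\Psi=\frac{r^2}{2}(1+\epsilon\chi\psi)$, the Liouville lemma gives $\frac{Y}{2}\Psi-\Psi=c_1\log r+c_2$. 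The left side equals $\frac{r^2}{2}\big[(a-1)(1+\epsilon\chi\psi)+\epsilon a\psi\chi'+\frac{\epsilon}{2}\chi\,V\psi\big]=O(r^2)$, so $c_1=c_2=0$ and the bracket vanishes.
\item At each fixed link point this reads $\alpha+\beta\chi(x)+\gamma\chi'(x)=0$ for all $x=\log r^2$, with $\alpha=a-1$ and $\gamma=\epsilon a\psi$.
\end{itemize}
For non-constant periodic $\chi$, the functions $1,\chi,\chi'$ are linearly independent. A non-trivial relation would make $\chi$ constant, or a solution of a first-order linear ODE, whose solutions are affine or exponential. Hence $a\equiv1$ and then $\psi\equiv0$, a contradiction.

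The paper uses a purely radial perturbation, for which the $V$-term drops out and the identity becomes a first-order ODE for $1+2\varepsilon\psi$ with no non-constant periodic solution. You can reduce to that case by taking $\psi\equiv1$.
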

\begin{proof}
Suppose that $\lambda=e$, indeed, the following method works for any $\lambda>1$. For all smooth non-constant periodic function $\psi\in C^\infty(\R)$ with period $1$ on $\R$ such that \begin{equation*}
    \sup_{\R}|\frac{d^k}{dx^k}\psi|\le C_k,
\end{equation*}
consider \begin{equation*}
    \omega_{\operatorname{sing}}=\omega_\mathcal{C}+\varepsilon i\partial\bar\partial(r^2\psi(\log(r^2))).
\end{equation*}
We observe that \begin{equation*}
    e\Phi_{e}^* \omega_{\operatorname{sing}}= \omega_{\operatorname{sing}}.
\end{equation*}
This 2-form satisfies the conditions in Corollary \ref{singular data} if $\varepsilon$ is sufficiently small. Thanks to Corollary \ref{singular data} and Proposition \ref{prop: uniqueness of bounded sols}, there is a unique bounded solution to K\"ahler-Ricci flow $(\omega_{\operatorname{smooth}}(t))_{t>0}$ with initial data $\omega_{\operatorname{sing}}$. Since $ (e\Phi_e^*\omega_{\operatorname{smooth}}(t/e))_{t>0}$ is also a bounded solution to K\"ahler-Ricci flow $(\omega_{\operatorname{smooth}}(t))_{t>0}$ with initial data $\omega_{\operatorname{sing}}$, then we must have:
\begin{equation*}
    e\Phi_e^*\omega_{\operatorname{smooth}}(t/e)=\omega_{\operatorname{smooth}}(t).
\end{equation*}
 
Now we prove $\omega_{\operatorname{smooth}}(t)_{t>0}$ is not a self-similar solution. Since this flow is only defined for positive time, if it is a self-similar solution, it must be determined by a Ricci expander. 
Suppose that 
\begin{equation*}
   g_{\operatorname{smooth}}(t)=g_E(t),
\end{equation*}
with $(M,g_E,Y)$ being a K\"ahler-Ricci expander (we are not assuming it is gradient) with $Y$ being a real-holomorphic vector field. Recall the soliton equation:
\begin{equation*}
    g_E(t)+t\Ric(g_E(t))=\frac{1}{2}\mathcal{L}_Yg_E(t),
\end{equation*}
and let $t$ tend to 0, we get
\begin{equation}\label{contradiction eq}
    g_{\operatorname{sing}}=\frac{1}{2}\mathcal{L}_Yg_{\operatorname{sing}}.
\end{equation}
Since $g_E$ is $JX-$invariant, it follows that we can always assume that $Y$ is $JX-$invariant by considering its average under $JX-$action. Therefore, we have $[Y,JX]=0$. Since $Y$ is a real-holomorphic vector field, we also have $J[Y,X]=[Y,JX]=0$. In this case,
\begin{equation*}
    \begin{split}
        0=\frac{1}{2}\mathcal{L}_{Y}g_{\operatorname{sing}}-g_{\operatorname{sing}}&=i\partial\bar\partial (\frac{Y}{2}\cdot (r^2/2+\varepsilon r^2\psi(\log r^2))-r^2/2-\varepsilon r^2\psi(\log r^2)):=i\partial\bar\partial\kappa.
    \end{split}
\end{equation*}
Since $\kappa$ is $JX-$invariant and locally bounded, we claim that $\kappa\equiv 0$. Locally, on an open neighborhood $U=(r_0-\delta,r_0+\delta)\times U_S$ with $U_S$ being an open set of link $S$, the vector field $Y$ can be expressed in the following way:
\begin{equation*}
    Y=aX+\sum_{i=1}^{2n-1}a_iV_i,
\end{equation*}
with $a, a_i$ being smooth functions on $U$ and $\{V_i\}_{i=1}^{2n-1}$ being the basis of $TU_S$. Since $[Y,X]=0$, locally we compute
\begin{equation*}
    \begin{split}
        0=[Y,X]=-X(a)X-\sum_{i=1}^{2n-1}X(a_i)V_i.
    \end{split}
\end{equation*}
We conclude that $X(a)=0$ and $X(a_i)=0$ for all $i\le2n-1$. On the one hand, locally we compute
\begin{equation*}
    \begin{split}
        &\frac{Y}{2}\cdot (r^2/2+\varepsilon r^2\psi(\log r^2))-r^2/2-\varepsilon r^2\psi(\log r^2)\\
        =&a(r^2/2+\varepsilon r^2\psi(\log r^2)+\varepsilon r^2\psi'(\log r^2))-r^2/2-\varepsilon r^2\psi(\log r^2).
    \end{split}
\end{equation*}
On the other hand, it is 0, we then conclude that
\begin{equation*}
    a(1+2\varepsilon\psi(\log r^2)+2\varepsilon\psi'(\log r^2))=1+2\varepsilon\psi(\log r^2).
\end{equation*}
We could take $\varepsilon>0$ sufficiently small such that $1+2\varepsilon\psi(\log r^2)+2\varepsilon\psi'(\log r^2)>0$, then we have
\begin{equation*}
    a=\frac{1+2\varepsilon\psi(\log r^2)}{1+2\varepsilon\psi(\log r^2)+2\varepsilon\psi'(\log r^2)}.
\end{equation*}
Since $X(a)=0$, in particular, $\frac{\partial}{\partial r}a=0$, therefore, a necessary condition of $(g_{\operatorname{smooth}}(t))_{t>0}$ being self-similar is
\begin{equation*}
    \frac{d}{dx}\left(\frac{1+2\varepsilon\psi(x)}{1+2\varepsilon\psi(x)+2\varepsilon\psi'(x)}\right)=0.
\end{equation*}
Define $u=1+2\varepsilon \psi(x)$, we then have
\begin{equation*}
   u=c(u+u'),
\end{equation*}
holds for some constant $c\neq0$. By integration, we then have
\begin{equation*}
    u(x)=e^{(c^{-1}-1)x}u(0),
\end{equation*}
which is not periodic unless $c=1$. When $c=1$, then $u=1+2\varepsilon\psi(x)\equiv\textnormal{constant}$, this still leads to contradiction.

\end{proof}
\begin{remark}
    See \cite{ToppingNonBreather} for cylindrical non-trivial Ricci breather. 

    In the above Theorem, we have proved the above solution to K\"ahler-Ricci flow is not a self-similar solution associated with a K\"ahler-Ricci expander. In fact, if the AC gradient K\"ahler-Ricci expander is one of FIK expanders, Cao's expanders and Gaussian expander, then the Ricci breathers that we constructed are not self-similar solutions associated with Ricci expanders.

    To see this, suppose that 
\begin{equation*}
   g_{\operatorname{smooth}}(t)=g_E(t),
\end{equation*}
with $(M,g_E,Y)$ being a Ricci expander (we are not assuming it is gradient or K\"ahler). Recall the soliton equation:
\begin{equation*}
    g_E(t)+t\Ric(g_E(t))=\frac{1}{2}\mathcal{L}_Yg_E(t),
\end{equation*}
and let $t$ tend to 0, we get
\begin{equation}\label{contradiction eq 2}
    g_{\operatorname{sing}}=\frac{1}{2}\mathcal{L}_Yg_{\operatorname{sing}}.
\end{equation}
Since $g_{\operatorname{sing}}$ is $U(n)-$invariant, so is $g_{\operatorname{smooth}}(t)$ by its construction. Therefore, we can always assume that $Y$ is $U(n)-$invariant. In this case, there exists a smooth function $a$ that only depends on $r$ such that $Y=a(r)X$.
Let $X^\flat$ be the flat of $X$ with respect to metric $g_{\operatorname{sing}}$. Since $g_{\operatorname{sing}}$ is $JX-$invariant, it follows that
\begin{equation*}
    X^\flat=d(\frac{r^2}{2}+\frac{X}{2}\cdot (\varepsilon r^2\psi(\log r^2)):=d(\psi_{\operatorname{sing}}(r)).
\end{equation*}
The metric $g_{\operatorname{sing}}$ is K\"ahler, the equation \eqref{contradiction eq 2} implies that
\begin{equation*}
    0= (\mathcal{L}_{Y}g_{\operatorname{sing}})_{ij}=\partial_i a\partial_j\psi_{\operatorname{sing}}+\partial_j a\partial_i\psi_{\operatorname{sing}}=2\partial_ir\partial_j r  \frac{d a}{d r}\frac{d\psi_{\operatorname{sing}}}{d r}.
\end{equation*}
Therefore,
\begin{equation*}
    \frac{d a}{d r}\frac{d\psi_{\operatorname{sing}}}{d r}=0,
\end{equation*}
which implies that $a\equiv constant$. In this case, we have
\begin{equation*}
    \frac{1}{2}\mathcal{L}_{Y}g_{\operatorname{sing}}=\frac{1}{2}a\mathcal{L}_{X}g_{\operatorname{sing}}=g_{\operatorname{sing}},
\end{equation*}
which 
leads to the same contradiction as in the proof of Theorem \ref{theorem non breathers}.
\end{remark}

\bibliographystyle{alpha}
\bibliography{references}
\end{document}